\documentclass[10pt]{amsart}

\usepackage{amsmath,amssymb,amsxtra,anysize, tikz,hyperref,tikz-cd,graphicx,combelow}

\usepackage{comment}
\usepackage{enumerate}
\usepackage{enumitem}
\usepackage{tikz}
\usepackage{tikz-cd}
\usetikzlibrary{decorations.markings}
\bibliographystyle{alpha}
\usepackage{hyperref}
\usepackage{amsthm,amsfonts,amsmath,amscd,amssymb}
\usepackage{xcolor,float}
\usepackage[all]{xy}
\usepackage{mathrsfs}
\usepackage{graphics,graphicx}
\usepackage{caption}
\usepackage{subcaption}
\usepackage{mathtools}

\setlength{\marginparwidth}{.9in}
\let\oldmarginpar\marginpar
\renewcommand\marginpar[1]{\-\oldmarginpar[\raggedleft\footnotesize #1]%
	{\raggedright\footnotesize #1}}
\setlength{\topmargin}{-50pt}
\setlength{\textheight}{24cm}
\setlength{\oddsidemargin}{5pt} \setlength{\evensidemargin}{5pt}
\setlength{\textwidth}{440pt}
\parskip=4pt


\theoremstyle{plain}
\newtheorem{thm}{Theorem}[section]
\newtheorem{lemma}[thm]{Lemma}
\newtheorem{example}[thm]{Example}
\newtheorem{prop}[thm]{Proposition}

\newtheorem{cor}[thm]{Corollary}

\newtheorem{conj}[thm]{Conjecture}
\theoremstyle{definition}

\newtheorem{definition}[thm]{Definition}
\newtheorem{remark}[thm]{Remark}

\theoremstyle{remark}

\numberwithin{equation}{section}

\newcommand{\PP}{\mathbb{P}}

\renewcommand{\P}{\mathbb{P}}

\newcommand{\N}{\mathbb{N}}
\newcommand{\Z}{\mathbb{Z}}

\newcommand{\R}{\mathbb{R}}
\newcommand{\C}{\mathbb{C}}
\newcommand{\SA}{\mathcal{A}}

\newcommand{\SB}{\mathscr{B}}

\newcommand{\fM}{\mathfrak{M}}
\newcommand{\La}{\Lambda}
\newcommand{\la}{\lambda}

\newcommand{\sse}{\subseteq}
\newcommand{\lr}{\longrightarrow}

\newcommand{\x}{\times}

\newcommand{\Cont}{\operatorname{Cont}}
\newcommand{\GL}{\operatorname{GL}}

\newcommand{\Aug}{\operatorname{Aug}}

\newcommand{\st}{\text{st}}
\newcommand{\std}{\text{st}}

\hyphenation{Pla-me-nevs-ka-ya}
\hyphenation{ma--ni--fold}
\hyphenation{Wein--stein}
\newcounter{daggerfootnote}

\newcommand{\BM}{\mathrm{BM}}
\newcommand{\SBim}{\mathrm{SBim}}


\newcommand{\bC}{\mathbb{C}}
\renewcommand{\C}{\mathbb{C}}

\newcommand{\bQ}{\mathbb{Q}}
\newcommand{\bR}{\mathbb{R}}

\newcommand{\cB}{\mathcal{B}}

\newcommand{\cF}{\mathscr{F}}

\newcommand{\cL}{\mathcal{L}}

\newcommand{\HHH}{\mathrm{HHH}}

\newcommand{\Hom}{\mathrm{Hom}}

\newcommand{\Fl}{\mathscr{F}\ell}
\newcommand{\brick}{\operatorname{brick}}

\newcommand{\Br}{\mathrm{Br}}

\newcommand{\schub}{\buildrel\circ \over X}
\newcommand{\rich}[2]{\mathcal{R}^{\circ}\!(#2,#1)}

\DeclareMathOperator{\ant}{ant}
\DeclareMathOperator{\inv}{inv}
\DeclareMathOperator{\BS}{BS} 
\DeclareMathOperator{\OBS}{OBS} 
\DeclareMathOperator{\Gr}{Gr}

\newcommand{\s}{\sigma}
\newcommand{\Ja}{J^{(1)}}
\newcommand{\Jb}{J^{(2)}}

\newcommand{\wire}[1]{\mathcal{W}(#1)}

\newcommand{\JS}[1]{{\color{cyan}{{\bf Jos\'e}: #1}}}
\newcommand{\MG}[1]{{\color{teal}{{\bf Misha}: #1}}}

 \usepackage{epigraph}
 
 \setlength\epigraphwidth{6cm}
 \setlength\epigraphrule{0pt}
 
 


   \DeclareFontFamily{U}{wncy}{}
\DeclareFontShape{U}{wncy}{m}{n}{<->wncyr10}{}
\DeclareSymbolFont{mcy}{U}{wncy}{m}{n}
\DeclareMathSymbol{\Sh}{\mathord}{mcy}{"58} 

\DeclareRobustCommand{\ateb}{\text{\reflectbox{$\beta$}}}
\DeclareRobustCommand{\um}{\text{\reflectbox{$\mu$}}}
\DeclareRobustCommand{\ate}{\text{\reflectbox{$\eta$}}}
\def\Le{\reflectbox{$\mathrm{L}$}}

\newcommand{\bdot}[2]{
\draw [black, fill = black] (#1, #2) circle [radius = 0.1];
}

\newcommand{\wdot}[2]{
\draw [black, fill = white] (#1, #2) circle [radius = 0.1];
}

\newcommand{\sdot}[2]{
\draw [black, fill=black] (#1, #2) circle [radius = 0.05]
}

\newcommand{\edge}[3]{
\draw (#1,#2) -- (#1, #2 + #3);
\bdot{#1}{#2}
\wdot{#1}{#2 + #3}
}

\newcommand{\dcrsg}[2]{
\begin{scope}[xshift = #1 cm, yshift = #2 cm]
\draw [dashed, rounded corners] (0,0) -- (0.2,0) -- (0.8,1)-- (1,1);
\draw [dashed, rounded corners] (0,1) -- (0.2,1) -- (0.8,0) -- (1,0);\
\end{scope}
}

\newcommand{\crsg}[2]{
\begin{scope}[xshift = #1 cm, yshift = #2 cm]
\draw [rounded corners] (0,0) -- (0.2,0) -- (0.8,1)-- (1,1);
\draw [rounded corners] (0,1) -- (0.2,1) -- (0.8,0) -- (1,0);\
\end{scope}
}

\newcommand{\rcrsg}[2]{
\begin{scope}[xshift = #1 cm, yshift = #2 cm]
\draw [rounded corners] (1,0) -- (0.8,0) -- (0.5,0.5);
\draw [rounded corners] (1,1) -- (0.8,1) -- (0.5,0.5);
\draw [dashed,rounded corners] (0.5,0.5) -- (0.2,0) -- (0, 0);
\draw [dashed,rounded corners] (0.5,0.5) -- (0.2,1) -- (0, 1);\
\end{scope}
}

\newcommand{\brcrsg}[2]{
\begin{scope}[xshift = #1 cm, yshift = #2 cm]
\draw [rounded corners] (0,1) -- (0.2,1) -- (0.8,0)-- (1,0);
\draw [rounded corners] (1,1) -- (0.8,1) -- (0.5,0.5);
\draw [dashed,rounded corners] (0.5,0.5) -- (0.2,0) -- (0, 0);\
\end{scope}
}

\newcommand{\trcrsg}[2]{
\begin{scope}[xshift = #1 cm, yshift = #2 cm]
\draw [rounded corners] (1,0) -- (0.8,0) -- (0.5,0.5);
\draw [rounded corners] (0,0) -- (0.2,0) -- (0.8,1) -- (1,1);
\draw [dashed,rounded corners] (0.5,0.5) -- (0.2,1) -- (0, 1);\
\end{scope}
}

\newcommand{\trcrsgbr}[2]{
\begin{scope}[xshift = #1 cm, yshift = #2 cm]
\draw [dashed, rounded corners] (1,0) -- (0.8,0) -- (0.5,0.5);
\draw [rounded corners] (0,0) -- (0.2,0) -- (0.8,1) -- (1,1);
\draw [dashed,rounded corners] (0.5,0.5) -- (0.2,1) -- (0, 1);\
\end{scope}
}

\newcommand{\crsgr}[2]{
\begin{scope}[xshift = #1 cm, yshift = #2 cm]
\draw [rounded corners] (0,0) -- (0.2,0) -- (0.5,0.5);
\draw [rounded corners] (0,1) -- (0.2,1) -- (0.5,0.5);
\draw [dashed,rounded corners] (0.5,0.5) -- (0.8,0) -- (1, 0);
\draw [dashed,rounded corners] (0.5,0.5) -- (0.8,1) -- (1, 1);\
\end{scope}
}

\newcommand{\crsgbr}[2]{
\begin{scope}[xshift = #1 cm, yshift = #2 cm]
\draw [rounded corners] (0,0) -- (0.2,0) -- (0.8,1)-- (1,1);
\draw [rounded corners] (0,1) -- (0.2,1) -- (0.5,0.5);
\draw [dashed,rounded corners] (0.5,0.5) -- (0.8,0) -- (1, 0);\
\end{scope}
}

\newcommand{\btm}[2]{
\begin{scope}[xshift = #1 cm, yshift = #2 cm]
\draw (0,0) -- (1,0);
\end{scope}
}

\newcommand{\dbtm}[2]{
\begin{scope}[xshift = #1 cm, yshift = #2 cm]
\draw[dashed] (0,0) -- (1,0);
\end{scope}
}

\newcommand{\tp}[2]{
\begin{scope}[xshift = #1 cm, yshift = #2 cm]
\draw (0,1) -- (1,1);
\end{scope}
}

\newcommand{\dtp}[2]{
\begin{scope}[xshift = #1 cm, yshift = #2 cm]
\draw[dashed] (0,1) -- (1,1);
\end{scope}
}

\newcommand{\dmr}[2]{
\begin{scope}[xshift = #1 cm, yshift = #2 cm]
\draw (0,0) -- (1,0);
\draw (0,1) -- (1,1);
\edge{0.5}{0}{1};
\end{scope}
}

\newcommand{\hdmr}[2]{
\begin{scope}[xshift = #1 cm, yshift = #2 cm]
\draw (0.5,0) -- (1,0);
\draw (0.5,1) -- (1,1);
\edge{0.5}{0}{1};
\end{scope}
}

\newcommand{\tdmr}[2]{
\begin{scope}[xshift = #1 cm, yshift = #2 cm]
\draw (0.5,0) -- (1,0);
\draw (0,1) -- (1,1);
\edge{0.5}{0}{1};
\end{scope}
}

\newcommand{\bdmr}[2]{
\begin{scope}[xshift = #1 cm, yshift = #2 cm]
\draw (0,0) -- (1,0);
\draw (0.5,1) -- (1,1);
\edge{0.5}{0}{1};
\end{scope}
}

\newcommand{\bcrsg}[2]{
\begin{scope}[xshift = #1 cm, yshift = #2 cm]
\draw [rounded corners] (0,0) -- (0.2,0) -- (0.8,1)-- (1,1);
\end{scope}
}

\title{Positroid Links and Braid varieties}
\subjclass[2020]{13F60, 14M15, 53D12, 57K43}

\author[R. Casals]{Roger Casals}
\address{Dept. of Mathematics\\ University of California, Davis\\ One Shields Avenue, Davis, CA, USA}
\email{casals@math.ucdavis.edu}
\author[E. Gorsky]{Eugene Gorsky}
\address{Dept. of Mathematics\\ University of California, Davis\\ One Shields Avenue, Davis, CA, USA}
\email{egorskiy@math.ucdavis.edu}
\author[M. Gorsky]{Mikhail Gorsky}
\address{Department of Mathematics,
University of Vienna,
Oscar-Morgenstern Platz~1,
1090 Vienna,
Austria
\\
\newline and Universit\"at Hamburg, Fachbereich Mathematik, Bundesstraße 55, 20146 Hamburg, Germany\\ 
\newline and Institut Camille Jordan UMR 5208, Université Jean Monnet, CNRS, Centrale Lyon, INSA Lyon, Université Claude Bernard Lyon 1, 20, rue Annino, 42023, Saint-Étienne, France.}
\email{mikhail.gorskii@univie.ac.at}
\author[J. Simental]{Jos\'e Simental}
\address{Instituto de Matem\'aticas, Universidad Nacional Aut\'onoma de M\'exico. Mexico City, Mexico.}
\email{simental@im.unam.mx}

\usepackage{amsfonts}
\input{cyracc.def}

\font\tencyrit=wncyi10
\font\tencyr=wncyr10
\def\cyi{\tencyrit\cyracc}
\def\cyr{\tencyr\cyracc}

\begin{document}

\begin{abstract}
We study braid varieties and their relation to open positroid varieties. We discuss four different types of braids associated to open positroid strata and show that their associated Legendrian links are all Legendrian isotopic. In particular, we prove that each open positroid stratum can be presented as the augmentation variety for four different Legendrian fronts described in terms of either permutations, juggling patterns, cyclic rank matrices or Le diagrams. We also relate braid varieties to open Richardson varieties and brick manifolds, showing that the latter provide projective compactifications of braid varieties, with normal crossing divisors at infinity.
\end{abstract}

\maketitle
\vspace{-0.7cm}


\epigraph{
\cyi Ona lyubila Richardsona\\ Ne potomu, chtoby prochla
}{\cyr A.~S.~Pushkin, \cyi Evgeni\u\i \, Onegin\footnotemark}\footnotetext{{\it The reason she loved Richardson
  was not that she had read him} --- A.S. Pushkin, {\it Eugene Onegin} \, (tr. V. Nabokov).}


\setcounter{tocdepth}{1}
\tableofcontents

\section{Introduction}\label{sec:intro}
This article studies braid varieties \cite{CGGS,Mellit} and their relation to open positroid varieties \cite{KLS}. In a nutshell, we study four braids associated to any open positroid variety, and develop new techniques to algebraically study their braid varieties. 
In addition, this paper brings to bear insight from contact and symplectic topology to explicitly study these braid varieties, with a focus on Legendrian links and their relation to open positroid varieties in Grassmannians.

\noindent An open positroid variety $\Pi$ of the Grassmannian $\Gr(k,n)$ can be indexed by either of the following four pieces of data. First, a pair of permutations $u,w\in S_n$ such that $u\leq w$ in the Bruhat order and $w$ is a $k$-Grassmannian permutation. Second, a $k$-bounded affine permutation $f:\Z\lr\Z$ of size $n$. 
Third, a cyclic rank matrix $r$ and, fourth, a Le diagram. The bijections between these objects and the description of their associated positroid varieties are provided in \cite{KLS, Post}. In this article, we study four braids, one associated to each of these four pieces of data, and introduce and study their associated Legendrian links.

The first result of this manuscript, within the realm of algebraic combinatorics, is showing that these four braids close up to links in $\R^3$ that are smoothly isotopic, up to trivially adding unlinks. For that, we develop new results  using the positroid data above: $k$-Grassmannian permutations, $k$-bounded affine permutations,  cyclic rank matrices and Le diagrams. In particular, this requires addressing the dissonance in the number of strands between these braids, which we address by introducing a Markov-type destabilization move that suits the algebraic combinatorics associated to positroids.

\noindent The main contact geometric result of this manuscript is then showing that the four associated Legendrian links are all Legendrian isotopic, up to trivially unlinked 
unknots. In particular, we show that our results and  constructions in the smooth case, related to the algebraic combinatorics of positroids, can all be realized by contact isotopies. To our knowledge, the conceptual insight that certain Legendrian links, not just smooth links, underlie each of these four presentations of a positroid variety is also new. It has the important consequence of allowing the description and study of positroid strata in terms of contact topology, which has already been initiated in other works with fruitful consequences, cf.~ \cite{asplund2023lagrangian,cgglss,CGGS,CGLSBW,CasalsWeng22}. In particular, \cite[Theorem 1.1]{CGGS} established the relationship between braid varieties and augmentation varieties. The contact isotopies between the four Legendrian links above imply that the four braid varieties associated to these Legendrians are isomorphic to the corresponding positroid variety, up to trivially adding frozens.

\noindent Finally, the article includes new results relating braid varieties to projective brick manifolds and open Richardson varieties. In particular, we show that brick varieties are good projective compactifications of our affine braid varieties. This also allows us to relate their homology to the top $a$-degree Khovanov-Rozansky homology of the underlying smooth link and establish the curious Lefschetz property for open Richardson varieties. The article concludes with a brief discussion on conjectural matters regarding cluster structures and Legendrian links.
\subsection{Scientific Context}\label{ssec:scientific_context}

Positroid varieties first appeared in the study of total positivity \cite{Lusztig1994, Lusztig1998, Post, Rietsch} and in the context of Poisson geometry \cite{BGY}. Let ${\Pi}_{u,w}$ be the open positroid variety of the Grassmannian $\Gr(k,n)$ indexed by a pair of permutations $u,w\in S_n$, where $u\leq w$ in Bruhat order and $w$ is $k$-Grassmannian. We consider the bijections between such pairs $(u,w)$, bounded affine permutations $f:\Z\lr\Z$, cyclic rank matrices $r$,  and Le-diagrams $\Le$ established in \cite{KLS, Post}.

\noindent For instance, the bounded affine permutation $f(u,w):\Z\lr\Z$ corresponding to a pair $(u,w)$ is $f(u,w):=u^{-1}t_kw$, where $t_k$ is the translation by the $k$th fundamental weight; conversely, $f$ recovers $(u,w)$. Here $f$ is interpreted as a bijection $f:\Z\to \Z$ such that $f(i+n)=f(i)+n$ and  $i\le f(i)\le i+n$ for all $i\in\Z$. The four pieces of data $(u,w)$, $f$, $r$ and $\Le$ are said to {\it represent the same positroid type} if they correspond to each other under these bijections. Each piece of data, $(u,w)$, $f$, $r$, and $\Le$, yields an open stratum ${\Pi}_{u,v}$, ${\Pi}_{f}$, ${\Pi}_{r}$, and ${\Pi}_{\Le}$ in $\Gr(k,n)$, and ${\Pi}_{u,v} = {\Pi}_{f} = {\Pi}_{r} = {\Pi}_{\Le}$ if $(u,w)$, $f$, $r$, and $\Le$ represent the same positroid type, cf.~\cite{KLS}.

In Section \ref{sec:RichardsonJuggling} we explain how each of these pieces of data, $(u,w)$, $f$, $r$ and $\Le$, also yields a {\it braid word}. In consequence, we can associate braids and links to each such four types of positroid data. These four braids, which we correspondingly denote $R_n(u,v)$, $J_k(f)$, $M_k(r)$ and $D_{k}(\Le)$, are studied in detail in this article. Either of these four braids will be referred to as a {\it positroid braid}. We also connect the results in Section \ref{sec:RichardsonJuggling} to previous works in the literature, including \cite{GL,KLS,STWZ}.

In Section \ref{sec:Legendrian} we associate a {\it Legendrian link} to each such positroid braid. This has an important consequence: we can construct the corresponding positroid strata in a contact geometric manner. Namely, for the Legendrian links we construct from positroid braids, the corresponding positroid stratum is recovered as a Legendrian invariant. Specifically, as the spectrum of the $0$th homology of the Legendrian contact dg-algebra. Therefore,  it becomes a central question whether these Legendrian links are Legendrian isotopic if they are obtained from data representing the same positroid type. This is the content of Section \ref{sec:Legendrian}, where we develop the necessary results to show that this is the case. Note that these Legendrian links and their connection to positroid strata and their cluster algebras have already featured in the recent preprints \cite{asplund2023lagrangian,cgglss,CGLSBW,glsbs,glsb}.

In Section \ref{sec:brick} we study the braid varieties associated to these positroid braids. Braid varieties have featured prominently in the series of articles \cite{CGGS,cgglss,glsbs,glsb}, where their cluster structures are studied. The present manuscript is the second part of a trilogy: the first part is \cite{CGGS}, where braid varieties were studied through weaves, and third part is \cite{cgglss}, which establishes the general existence of cluster algebra structures on braid varieties. The current article  studies some relevant algebraic geometric aspects of braid varieties  associated to positroid data in Section \ref{sec:brick}. These include  their relation to open Richardson varieties, the construction of smooth projective compactifications with normal crossing divisors, and the computation of their torus-equivariant homology, among others.

\subsection{Main Results} By definition, two positive braid words $\beta_1,\beta_2$ are said to be equivalent if they represent the same element in the braid group $\Br_m$. By \cite{Garside69}, two equivalent $\beta_1,\beta_2$ represent the same element in the braid monoid $\Br_m^+$.

\noindent The first result, proven in Section \ref{sec:RichardsonJuggling},  establishes the relation between the four types of positroid braids $R_n(u,v)$, $J_k(f)$, $M_k(r)$ and $D_{k}(\Le)$.

\begin{thm}\label{thm:main1}
Let $u,w\in S_n$ be such that $u\leq w$ in the Bruhat order and $w$ is a $k$-Grassmannian permutation, $f$ a bounded affine permutation, $r$ a cyclic rank matrix, and $\Le$ a Le-diagram. Suppose that these four pieces of data represent the same positroid type. Then

\begin{itemize}
	\item[(i)] The $n$-stranded braid $\Delta_nR_n(u,w)$ and the $k$-stranded braid $\Delta_kJ_k(f)$ are equivalent, up to positive Markov stabilizations and adding unlinked disjoint strands. \\

    \item[(ii)] The $k$-stranded braid $J_k(f)$ and the $k$-stranded braid $\Delta_kD_k(\Le)$ are equivalent.\\
 
	\item[(iii)] The $k$-stranded positive braids $\Delta_kJ_k(f)$ and $M_k(r)$ are equivalent.\hfill$\Box$
\end{itemize}
\end{thm}

\noindent Note that Theorem \ref{thm:main1}.(i) relates two braids, $R_n(u,w)$ and $J_k(f)$, on 
a different number of strands. Section \ref{ssec:destabilization} develops a Markov-type destabilization  which is well-suited for  comparing the different types of algebraic combinatorics related to positroids.

The second result, established in Section \ref{sec:Legendrian}, is a contact geometric counterpart of Theorem \ref{thm:main1}. Section \ref{ssec:Leglinks} introduces four Legendrian links $\La(u,w),\La(f),\La(r)$ and $\La(\Le)$ in $(\R^3,\xi_\st)$, each one associated to a different type of positroid data. 

\begin{thm}\label{thm:main2}
Let $u,w\in S_n$ be such that $u\leq w$ in the Bruhat order and $w$ is a $k$-Grassmannian permutation, $f$ a bounded affine permutation, $r$ a cyclic rank matrix, and $\Le$ a Le-diagram. Suppose that these four pieces of data represent the same positroid type.

\noindent Then the four Legendrian positroid links $\La(u,w),\La(f),\La(r),\La(\Le)\sse(\R^3,\xi_\st)$ are Legendrian isotopic, up to unlinked max-tb Legendrian unknots.\hfill$\Box$
\end{thm}

\noindent An important consequence of Theorem \ref{thm:main2} is that the Legendrian contact dg-algebras associated to each of these Legendrian links are stable tame isomorphic. In particular, the spectra of their $0$th  homology algebras are isomorphic up to torus factors and they coincide with the corresponding positroid stratum, also up to torus factors. This provides an intrinsic and geometric way to recover positroids from these Legendrian links.

The third result studies braid varieties associated to positroid braids. In particular,  it relates braid varieties to open Richardson varieties. Braid varieties $X(\beta;w)$ associated to a braid (word) $\beta$ and a permutation $w$ were introduced in \cite{CGGS}, their definition is recalled in Section \ref{ssec:braid_prelim} below. The proof of the following result uses Theorem \ref{thm:main2} together with \cite{KLS} to show that any positroid variety $\Pi_{u, w}$ in the Grassmannian $\Gr(k, n)$ can be expressed in terms of braid varieties, either using the $n$-stranded braid $R_{n}(u, w)$ or the $k$-stranded braid $J_{k}(f)$.

\begin{thm}\label{thm:rich vs juggling intro}
Let $u,w\in S_{n}$ with  $u\leq w$ in Bruhat order, $w$ a $k$-Grassmannian permutation, and $f := ut_{k}w^{-1}$ the corresponding $k$-bounded affine permutation. Then we have algebraic isomorphisms 
\begin{itemize}
    \item[(i)] $\Pi_{u, w} 
\cong X(\beta(u^{-1}w_{0,n})\beta(w_{0,n}ww_{0,n});w_{0,n})$;\\

\item[(ii)] $X(\beta(u^{-1}w_{0,n})\beta(w_{0,n}ww_{0,n});w_{0,n})\cong  X(J_{k}(f);w_{0,k}) \times (\bC^{*})^{n-k-\varphi}$
\end{itemize} 
of affine algebraic varieties, where $\varphi := \#\{i \in [1,n] : f(i) = i\}$ is the number of fixed points of $f$.\hfill$\Box$ 
\end{thm}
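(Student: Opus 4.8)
The plan is to establish the four-term chain of isomorphisms by combining the braid equivalences of Theorem~\ref{thm:main1} with the invariance statements of Theorem~\ref{thm:main2}, and then to invoke the results of Knutson--Lam--Speyer~\cite{KLS} to identify one end of the chain with the positroid variety $\Pi_{u,w}$. Concretely, I would proceed as follows. First, I would show $X(R_n(u,w)\Delta_n)/V \cong X(\beta(w)\beta(u^{-1}w_{0,n});w_{0,n})$: the word $R_n(u,w)\Delta_n = \beta(w)\beta(u)^{-1}\Delta_n$ has negative crossings coming from $\beta(u)^{-1}$, but since $\Delta_n = \beta(u)\beta(u^{-1}w_{0,n})$ up to braid equivalence (lifting the reduced-word factorization $w_{0,n} = u\cdot(u^{-1}w_{0,n})$, which is reduced since $u\le w_{0,n}$), a sequence of Reidemeister~II moves cancels $\beta(u)^{-1}$ against the $\beta(u)$ inside $\Delta_n$. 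By the analysis preceding Theorem~\ref{thm:main2}, each such Reidemeister~II cancellation changes $(X(\vartheta),V(\vartheta))$ by adding a trivial $\bC$-factor together with an extra locally nilpotent derivation, leaving the quotient $X(\vartheta)/V(\vartheta)$ invariant; after all cancellations the remaining word is the positive word $\beta(w)\beta(u^{-1}w_{0,n})\in\cB_n^+$, for which $V=\emptyset$ and the pair reduces to the honest braid variety $X(\beta(w)\beta(u^{-1}w_{0,n});w_{0,n})$. This gives the first and second isomorphisms at once, with $d=0$ since no stabilizations occur.

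Next, for the third isomorphism $X(R_n(u,w)\Delta_n)/V \cong X(J_k(f);w_{0,k})\times(\bC^*)^{n-s-k}$, I would apply Theorem~\ref{thm:main1}.(i): the $n$-stranded braid $R_n(u,w)$ and the $k$-stranded braid $J_k(f)\Delta_k^{-1}$ are equivalent up to adding unlinked disjoint strands, so $R_n(u,w)\Delta_n$ and $J_k(f)\Delta_k^{-1}\Delta_n$, i.e. after bookkeeping the added trivial strands, the words $R_n(u,w)\Delta_n$ and $J_k(f)$ (up to a shift of $\Delta$-factors and trivial strands) are $\Delta$-equivalent as words whose underlying braids are positive. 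Writing $\vartheta_\ell = R_n(u,w)\Delta_n$ in the form $\eta_\ell\Delta_\ell$ and $\vartheta_m = J_k(f)\Delta_k$ in the form $\eta_m\Delta_m$ — note $J_k(f)\Delta_k$ is genuinely of the form ``something times $\Delta$'' while $J_k(f)$ needs $\Delta_k$ supplied by the $(-1)$-framing convention — Theorem~\ref{thm:main2}.(ii) yields a $\bC^*$-stable isomorphism between the quotients, hence between $X(R_n(u,w)\Delta_n)/V$ and $X(J_k(f)\Delta_k;w_{0,k})$ up to $\bC^*$-factors. One then reconciles $X(J_k(f)\Delta_k;w_{0,k})$ with $X(J_k(f);w_{0,k})$ (a destabilization-type computation, since $J_k(f)\Delta_k$ and $J_k(f)$ differ by a half-twist that the braid-variety formalism absorbs into the permutation $\pi$), and the precise exponent of the $\bC^*$-factor, namely $n-s-k$, is pinned down by an Euler-characteristic or dimension count: $\dim X(R_n(u,w)\Delta_n)/V$ equals $\ell(w)-\ell(u) = \dim\Pi_{u,w}$, while $\dim X(J_k(f);w_{0,k})$ is computed from the length of $J_k(f)$, and the discrepancy is exactly the number of non-fixed, non-``through'' strands, which equals $n - s - k$ by the combinatorics of bounded affine permutations in~\cite{KLS}.

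Finally, I would identify $X(\beta(w)\beta(u^{-1}w_{0,n});w_{0,n})$ with $\Pi_{u,w}$ itself. Here the input is the Knutson--Lam--Speyer presentation of the open positroid variety as an open Richardson variety inside the full flag variety, pushed to the Grassmannian; equivalently, the braid variety of a reduced word for $w\cdot(w_{0,n}u^{-1}w_{0,n})^{-1}$-type braid is known (by~\cite{CGGS,GL}) to coincide with the corresponding open Richardson variety $\cR_{u,w}$, and projection along the $k$-Grassmannian direction identifies $\cR_{u,w}$ with $\Pi_{u,w}$ since $w$ is $k$-Grassmannian. I would spell out that $\beta(w)\beta(u^{-1}w_{0,n})$ is precisely the positive braid word whose braid variety Galashin--Lam~\cite{GL} identify with the decorated/open positroid variety, so this step is essentially a citation once the words are matched.

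The main obstacle I expect is the third isomorphism, specifically the careful bookkeeping of $\Delta$-factors, added trivial strands, and the exact $\bC^*$-exponent. Theorem~\ref{thm:main1}.(i) only gives equivalence ``up to adding unlinked disjoint strands,'' and Theorem~\ref{thm:main2} is stated for $\Delta$-equivalences between words whose braids are positive, so one must verify that inserting the unlinked unknots (which contribute max-tb Legendrian unknots, i.e. $\bC^*$-factors on the variety side) is compatible with the hypothesis $[\vartheta]\in\Br^+$ and with the $\vartheta=\eta\Delta$ normal form; and one must independently compute the dimension drop to get the clean count $n-s-k$ rather than merely ``some $d_1,d_2$.'' The identification of $X(J_k(f)\Delta_k;w_{0,k})$ with $X(J_k(f);w_{0,k})$ up to a $\bC^*$-power (absorbing $\Delta_k$ into $\pi$) also requires the destabilization analysis of Section~\ref{sec:DGA}, and getting the two $\bC^*$-exponents to collapse to a single honest product (not just a $\bC^*$-equivalence) uses that the quotient is smooth affine and that one of the two sides, $\Pi_{u,w}$, has no residual $\bC^*$-factor to cancel, forcing all the $\bC^*$-stabilization onto the $J_k(f)$ side.
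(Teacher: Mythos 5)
Your proposal follows the paper's proof of Theorem~\ref{thm:rich vs juggling intro} essentially verbatim: cancel $\beta(u)^{-1}$ against a reduced expression of $\Delta_n$ with prefix $\beta(u)$ via Reidemeister~II moves, invoking Theorem~\ref{thm:main2}.(i); identify the resulting positive word's braid variety with $\Pi_{u,w}$ via Corollary~\ref{cor:richardson} and~\cite{KLS}; pass to $J_k(f)$ via Theorems~\ref{thm:main1} and~\ref{thm:main2}.(ii); and pin down the torus exponent by a dimension count. The paper gets $n-s-k$ by combining Lemma~\ref{lemma:length of J} (the length formula $\ell(J_k(u,w))=\ell(w)+\binom{k}{2}-\ell(u)-(n-k)+s$) with the dimension formula of \cite[Corollary~5.22]{CGGS}, and obtains the clean product (not merely $\bC^*$-equivalence) by noting that the passage from $R_n(u,w)$ to the Le-braid via Proposition~\ref{prop: general markov} uses destabilizations only, never stabilizations --- which is tighter than your post-hoc cancellation reasoning; also your aside about ``absorbing $\Delta_k$ into the permutation $\pi$'' is a slip, since the extra $\Delta_k$ is built into the braid-pair construction of Definition~\ref{def:braidpair} while $\pi$ stays $w_{0,k}$, but this confusion does not affect the logic.
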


\noindent Theorem \ref{thm:rich vs juggling intro} is proven in Section \ref{ssec:Richardson_braidvar}. Section \ref{sec:brick} offers a fourth result as well. Section \ref{ssec:compactify} shows that the brick manifolds introduced in \cite[Definition 3.2]{Escobar} provide smooth projective compactifications of braid varieties. 
We denote the brick manifold of $\beta$ by $\brick(\beta)$ and its maximal open stratum by $\brick^{\circ}(\beta)$, cf.~\cite{Escobar}. The precise relation we establish between braid varieties and brick manifolds 
is the following:

\begin{thm}	\label{thm: intro brick} Let $\beta = \sigma_{i_{1}}\cdots\sigma_{i_{\ell}}\in\cB_n$ be a positive braid word, $\ateb\in\cB_n$ its opposite, $\delta(\beta)$ the Demazure product of $\beta$, and consider the truncations $\beta_{j} := \sigma_{i_{1}}\cdots \sigma_{i_{j}}$, $j\in[1,\ell]$. The following holds:
	
	\begin{itemize}
		\item[$(i)$] The algebraic map $\Theta:\bC^{\ell}\lr\Fl_n^{\ell+1}$, $(z_1, \dots, z_{\ell}) \mapsto (\cF^{\std}, \cF^{1}, \dots, \cF^{\ell}),$
		where $\cF^{j}$ is the flag associated to the matrix $B_{\ateb_{j}}^{-1}(z_{\ell - j + 1}, \dots, z_{\ell})$, restricts to an algebraic isomorphism
		$$\Theta:	X(\ateb; \delta(\beta)) \stackrel{\cong}{\lr} \brick^{\circ}(\beta).$$
				
		\item[$(ii)$] 
		The complement to $X(\ateb; \delta(\beta))$ in $\brick(\beta)$ is a normal crossing divisor. Its components correspond to all possible ways to remove a letter from $\ateb$ while preserving its Demazure product.
	\end{itemize}
In particular, $\brick(\beta)$ is a smooth projective good compactification of the affine variety $X(\ateb;\delta(\beta))$.
\end{thm}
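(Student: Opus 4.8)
\textbf{Proof plan for Theorem \ref{thm: intro brick}.}

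The plan is to set up the brick manifold $\brick(\beta)$ explicitly as an iterated $\PP^1$-bundle (equivalently, as a subvariety of $\Fl_n^{\ell+1}$ cut out by Bruhat-type incidence conditions at each step) and then identify its open stratum with the braid variety of the opposite word $\ateb$ relative to the Demazure product $\delta(\beta)$. First I would recall from \cite{Escobar} the description of $\brick(\beta)$ as the space of chains of flags $(\cF^0,\cF^1,\dots,\cF^\ell)$ with $\cF^0=\cF^\std$, and with $\cF^{j}$ obtained from $\cF^{j-1}$ by the rule that the two flags agree away from step $i_j$, with the $i_j$-th subspaces either equal or adjacent; the maximal open stratum $\brick^\circ(\beta)$ is the locus where the Demazure product is achieved ``greedily'' at every truncation, i.e. the relative position of $\cF^{j}$ with respect to the opposite flag is $\delta(\beta_j)$ for all $j$. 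On the braid-variety side, I would unwind the matrices $B_i(z)$: the product $B_{\ateb_j}(z_\ell,\dots,z_{\ell-j+1})$ acting on the standard flag produces exactly a chain of flags where consecutive flags are related by an elementary ($\sigma_i$-type) modification with parameter $z$. Thus $\Theta$ is manifestly well-defined into $\Fl_n^{\ell+1}$; the content is that its image lands in $\brick(\beta)$ and that, restricted to $X(\ateb;\delta(\beta))$, it is an isomorphism onto $\brick^\circ(\beta)$.

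For part $(i)$ the key steps are: (1) check that each elementary modification encoded by $B_i(z)$ matches Escobar's local move at position $i$ — this is a direct $2\times 2$ computation showing that $z$ parametrizes a chart of the relevant $\PP^1$; (2) show that the upper-triangularity condition defining $X(\ateb;\pi)$ with $\pi=\delta(\beta)$ translates precisely into the statement that the terminal flag $\cF^\ell$ is in relative position $\delta(\beta)$ with the opposite flag, and more importantly that this forces the greedy condition at every intermediate truncation $\beta_j$, so that the whole chain lies in the open stratum; here I would use the standard fact (subword-complex/Demazure product combinatorics, as in Knutson–Miller) that a subword of $\beta$ has Demazure product $\delta(\beta)$ iff it is a ``facet'' of the subword complex, which is the same as saying no intermediate length drop is possible, i.e. exactly the condition that each $B_i(z_j)$ contributes a genuine crossing. (3) Construct the inverse: given a chain in $\brick^\circ(\beta)$, read off the parameters $z_j$ from the charts of step (1); the greedy condition guarantees these charts are the correct (nonempty) ones, so the $z_j$ are uniquely and algebraically determined. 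Steps (1)–(3) together give the biregular isomorphism $\Theta\colon X(\ateb;\delta(\beta))\xrightarrow{\cong}\brick^\circ(\beta)$, and dimension count ($\dim X(\ateb;\delta(\beta)) = \ell - \ell(\delta(\beta))$ matched against $\dim\brick^\circ(\beta)$, which equals $\ell$ minus the number of ``non-increasing'' letters) is a useful consistency check.

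For part $(ii)$, the plan is to stratify $\brick(\beta)$ by which letters of $\ateb$ are ``used'' (i.e. at which positions $j$ the relative position strictly increases). Escobar's stratification of the brick manifold is indexed precisely by the faces of the subword complex $\Delta(\ateb,\delta(\beta))$ (equivalently by subwords of $\ateb$ whose Demazure product is $\delta(\beta)$), and its combinatorics — a vertex-decomposable, hence shellable, simplicial ball/sphere — yields that the boundary strata of codimension one are indexed by the ways to delete a single letter of $\ateb$ keeping the Demazure product fixed, and that the stratification has the local structure of a coordinate hyperplane arrangement. To get the \emph{normal crossing} statement I would argue locally: near a point of a codimension-$c$ stratum, the iterated $\PP^1$-bundle structure gives local coordinates in which $\brick(\beta)$ is affine space and $\brick(\beta)\setminus X(\ateb;\delta(\beta))$ is the union of $c$ coordinate hyperplanes, one for each ``deletable'' position through that point — this is where the vertex-decomposability of the subword complex does the real work, ensuring the links of faces are again balls/spheres so no worse-than-normal singularities of the divisor occur. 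Combined with Escobar's smoothness and projectivity of $\brick(\beta)$ \cite[Theorem 3.3]{Escobar}, this yields that $\brick(\beta)$ is a smooth projective good compactification of $X(\ateb;\delta(\beta))$.

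\textbf{Main obstacle.} I expect the hardest part to be step (2) of $(i)$: carefully matching the algebraically-defined condition ``$B_{\ateb}(z)\,\delta(\beta)$ is upper-triangular'' with the geometric/combinatorial condition ``the chain of flags stays in the top stratum of the subword complex at every truncation,'' i.e.\ proving that the braid-variety equations automatically impose greediness at \emph{all} intermediate steps rather than only at the end. This requires the precise dictionary between Demazure products, the upper-triangularity locus, and the facet structure of $\Delta(\ateb,\delta(\beta))$, and is the technical crux on which both the isomorphism in $(i)$ and the identification of boundary components in $(ii)$ rest; the normal-crossing claim in $(ii)$ is then a relatively formal consequence of the known good combinatorial properties (shellability/vertex-decomposability) of type-$A$ subword complexes.
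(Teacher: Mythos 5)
Your high-level strategy for part $(i)$ — parametrize chains of flags via the braid matrices, identify the terminal-flag condition with the braid-variety equation, and recover the $z$-parameters inductively — matches the paper's. However, there is a genuine error in the way you plan to execute it. You characterize $\brick^{\circ}(\beta)$ as the ``greedy'' locus where the relative position of $\cF^{j}$ to the reference flag is $\delta(\beta_j)$ for \emph{every} truncation $j$, and you flag proving this greediness from the braid-variety equations as the technical crux. That characterization of $\brick^{\circ}(\beta)$ is false: by definition $\brick^{\circ}(\beta)=m_\beta^{-1}(\delta(\beta)\cF^{\std})\cap\OBS(\beta)$ only asks that consecutive flags differ and that $\cF^{\ell}=\delta(\beta)\cF^{\std}$, and intermediate flags need not be in position $\delta(\beta_j)$ from $\cF^{\std}$. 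Concretely, take $\beta=\sigma_1^3$ in $S_2$, so $\delta(\beta)=s_1$ and $X(\ateb;s_1)=\{z_1+z_3(1+z_1z_2)=0\}\sse\bC^3$. The chain of lines $[1:0]\to[1:1]\to[1:0]\to[0:1]$ in $\PP^1$ has all consecutive terms distinct and terminates at $s_1\cF^{\std}$, so it lies in $\brick^{\circ}(\beta)$; it is $\Theta(1,0,-1)$ with $(1,0,-1)\in X(\ateb;s_1)$. Yet $\cF^{2}=\cF^{\std}$ has relative position $e\neq s_1=\delta(\beta_2)$. So the intermediate greediness you propose to prove from the braid-variety equations is not merely hard — it is simply not true on $X(\ateb;\delta(\beta))$, and a plan organized around establishing it cannot go through.

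The paper avoids this entirely. The factorization $B_{\ateb_{j+1}}^{-1}(z_{\ell-j},\dots,z_\ell)=B_{\ateb_j}^{-1}(z_{\ell-j+1},\dots,z_\ell)\,B_{i_{j+1}}^{-1}(z_{\ell-j})$, combined with the elementary Lemma \ref{lemma:easy} (your step (1), the $2\times 2$ chart computation), shows that \emph{any} $(z_1,\dots,z_\ell)\in\bC^{\ell}$ produces a chain with consecutive flags distinct, that the braid-variety equation is exactly the condition $\cF^{\ell}=\delta(\beta)\cF^{\std}$, and that the $z_j$ can be read off inductively from any chain in $\brick^{\circ}(\beta)$; no intermediate position condition enters. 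Your steps (1) and (3) are essentially the paper's, and step (2) should simply be deleted. For part $(ii)$ your route through vertex-decomposability/shellability of the subword complex is different from the paper's and not obviously sufficient: passing from a combinatorial property of a simplicial complex to a local-analytic normal-crossing statement about algebraic divisors requires a translation you do not supply. The paper instead stratifies $\brick(\beta)$ by the subsets $I$ of letters ``used,'' observes that each nonempty closed stratum with $\delta(\beta_I)=\delta(\beta)$ is itself a smooth brick manifold of dimension $|I|-\ell(\delta(\beta))$ by Escobar's theorem, and concludes directly that the codimension-one strata are smooth divisors meeting in complete intersections of the expected dimension, hence transversally.
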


\noindent Note that $\brick(\beta)$ depends on the choice of braid word $\beta\in\cB^+$, and not just the braid element $[\beta]\in\Br^+$, whereas $X(\beta;w_0)$ only depends on the positive braid $[\beta]$. Therefore, Theorem \ref{thm: intro brick}.(ii) can be used to construct several smooth projective SNC compactifications of the same braid variety.

\noindent In addition, Theorem \ref{thm: intro brick},  in combination with \cite{Escobar}, clarifies the connection between braid varieties and the combinatorics of subword complexes. This allows us to translate properties of spherical subword complexes via brick manifolds to braid varieties, and vice versa.

\noindent Finally, Section \ref{ssec:equiv_hom} explains how to compute the torus-equivariant homology of braid varieties associated to positroids. The article concludes in Section \ref{sec:cluster} with a brief discussion on conjectural matters.\\

\noindent {\bf Acknowledgments}: We are grateful to Taras Panov for his questions on possible compactifications of braid varieties and to Laura Escobar for her interest in our previous work \cite{CGGS}; these led to the conception of Section 4. We also thank Pavel Galashin, Thomas Lam, Anton Mellit and Minh-Tam Trinh for useful discussions, and Etienne M\'enard for his comments. E. G. would like to thank Melody Chan, Yifan Guo and Ethan Partida for correcting a mistake in an earlier version of Lemma \ref{lem: homology brick}. Finally, we are grateful to the referee for their helpful comments, including catching an error in the first version: the manuscript has improved thanks to them.

R.~Casals is supported by the National Science Foundation under grants DMS-2505760 and DMS-1942363, a UC Davis College of L\&S Dean's Fellowship, and the Alfred P. Sloan Foundation.
E.~Gorsky is supported by the NSF FRG grant DMS-1760329.
M.~Gorsky was partially supported by the French ANR grant CHARMS (ANR-19-CE40-0017), and by the Deutsche Forschungsgemeinschaft SFB 1624 ``Higher structures, moduli spaces and integrability'' (506632645), and received funding from the European Research Council (ERC) under the European Union’s Horizon 2020 research and innovation programme (grant agreement No. 101001159). Parts of this work were done during his stay at the University of Stuttgart, and he is very grateful to Steffen Koenig for the hospitality. J.~Simental was partially supported by SECIHTI project CF-2023-G-106 and UNAM'S PAPIIT Grant IA102124, and is grateful for the financial support and hospitality of the Max Planck Institute for Mathematics, where parts of this work were carried out.\hfill$\Box$\\

\noindent {\bf Notational Conventions}: Here are our notational conventions and a comparison to those in the existing literature. As usual, $S_n$ is the symmetric group and $s_i \in S_n$ is the simple transposition that just swaps $i$ and $i+1$. We multiply permutations as we compose functions. For example, $s_2s_1\in S_3$ is the permutation $s_2s_1(1) = 3, s_2s_1(2) = 1, s_2s_1(3) = 2$. Our notion of a $k$-Grassmannian permutation coincides with that of \cite{KLS} but differs from the one in \cite{GL}. A reason to choose this convention is Lemma \ref{lem: projection schubert} below.

Let $\Br_n$ be the braid group in $n$ strands with Artin generators $\sigma_1, \dots, \sigma_{n-1}$. Let $\Br^+_{n} \sse \Br_n$ be the monoid of positive braids. Braids are multiplied so that the map $\sigma_{i} \mapsto s_i$, $\Br_n \to S_n$ is a group homomorphism. When drawing braid diagrams, the convention is that the strands are enumerated by $1, \dots, n$ from top to bottom. Due to the convention above, we draw the braid diagram of a braid word as follows: we read the crossings (generators) of the braid word right-to-left and we draw them in the braid diagram left-to-right.
Thus, the following is a picture of the braid diagram for the braid word $\sigma_2\sigma_1 \in \Br_3$:
\begin{center}
    	\begin{tikzpicture}[scale = 0.6]
  \draw node at (5.5,1.5) {$3$};
  \draw node at (5.5,2.5) {$2$};
  \draw node at (5.5,3.5)  {$1$};

  \draw[out=0, in=180] (6, 3.5) to (8, 2.5);
  \draw[out=0, in=180] (6, 2.5) to (8, 3.5);
  \draw[out=0, in=180] (6, 1.5) to (8, 1.5);

  \draw[out=0, in=180] (8, 3.5) to (10, 3.5);
  \draw[out=0, in=180] (8, 2.5) to (10, 1.5);
  \draw[out=0, in=180] (8, 1.5) to (10, 2.5);

  \draw node at (7, 0.5) {$\sigma_1$};
  \draw node at (9, 0.5) {$\sigma_2$};

  \draw node at (3.5, 2.5) {\Large{$\sigma_2\sigma_1 =$}};
  \end{tikzpicture}
  \end{center}

\noindent Note that the underlying permutation is indeed $s_2s_1$. We denote by $\cB_n$ the set of braid {\it words} in the $n$ Artin generators $\sigma_1,\ldots,\sigma_{n-1}$, of $\Br_n$, and $\cB^+_n$ the set of {\it positive} braid words. Two braids are said to be \emph{equivalent} if they represent the same element in the braid group. Equivalently, if they are represented by braid words which are related by a sequence of braid moves. The set of braids equivalent to the braid represented by a braid word $\beta\in\cB_n$ is denoted by $[\beta]$. Given a braid word $\beta$, read left to right, its {\it opposite} $\ateb$ is defined to be the braid word $\beta$ read right to left, i.e. in reverse order. The half-twist word we use will be denoted by
$$\Delta_n:=(\sigma_1)(\sigma_2\sigma_1)(\sigma_3\sigma_2\sigma_1)\cdot\ldots\cdot(\sigma_{n-1}\sigma_{n-2}\ldots\sigma_2\sigma_1)\in\cB^+_n,$$
and its Coxeter projection is denoted by $w_{0,n}\in S_n$. If $u,w\in S_n$, we denote $u\leq w$ if $u$ is less than $w$ in the Bruhat order. The tautological braid lift of a reduced expression of a permutaion $w\in S_n$ to a braid word in $\Br$ is denoted by $\beta(w)\in \Br$ or simply $w\in \Br$. 
For a permutation $w \in S_n$, we denote by $w^*$ its conjugation by the longest element: $w^* := w_{0, n} w w_{0, n}$. In particular, we have $s_i^* = s_{n-i}$.
Finally, the braid matrices we use in Section \ref{sec:brick} coincide with those used in \cite{CGGS, CasalsNg} but differ from those used in \cite{cgglss}. The two conventions differ by taking inverse matrices. 

Finally, sometimes (in particular, in Section \ref{ssec:JugglingBraid}) we will consider  permutation braids where the strands are labeled in a non-standard way. In this case, we clearly state the labels and their order {\it both} on the left and on the right, and how the left labels are connected to the right ones. This determines a permutation braid uniquely up to braid moves. 

In particular, we define {\it interval} braids as follows.
An interval braid on $k$ strands is a braid of the form $\sigma_i\sigma_{i-1}\cdots \sigma_{j}$ for some $1 \leq j \leq i \leq k - 1$. When depicting such braids diagrammatically, we label strands on the left and on the right by a pair of $k$-element subsets of $[1,n]$ differing in precisely one element, in the decreasing order from top to bottom on both sides. 
Explicitly, assume that $a_1>\ldots>a_{k-1}$, $a_{j-1}>b>a_j$ for some $1 \leq j \leq k-1$ or $b > a_1$, and $a_i>c>a_{i+1}$ for some $1 \leq j \leq k-2$, $c >a_1$ or $a_{k-1} > c$. 
The interval braid has labels $a_1,\ldots,a_{j-1},b,a_j,\ldots,a_{k-1}$ on the left and $a_1,\ldots,a_i,c,a_{i+1},\ldots,a_{k-1}$ on the right. The labels $a_1,\ldots,a_{k-1}$ are connected with the namesake labels on the right, while $b$ on the left is connected to $c$ on the right. The resulting permutation braid has $i-j$ crossings. See Figure \ref{fig: interval braid} for a visual description. 
\hfill$\Box$

\begin{figure}
    \centering
   \begin{center}
    	\begin{tikzpicture}[scale = 0.6]

     \draw node at (5, -3.5) {$a_{k-1}$};
     \draw node at (5, -2.5) {$\vdots$};
     \draw node at (5, -1.5) {$a_{i+1}$};
  \draw node at (5, -0.5) {$a_{i}$};
  \draw node at (5,0.5) {$a_{i-1}$};
  \draw node at (5, 1.5) {$\vdots$};
  \draw node at (5,2.5) {$a_{j}$};
  \draw node at (5,3.5)  {$b$};
  \draw node at (5, 4.5) {$a_{j-1}$};
  \draw node at (5, 5.5) {$\vdots$};
  \draw node at (5, 6.5) {$a_1$};

 \draw node at (9, -3.5) {$a_{k-1}$};
     \draw node at (9, -2.5) {$\vdots$};
     \draw node at (9, -1.5) {$a_{i+1}$};
  \draw node at (9, -0.5) {$c$};
  \draw node at (9,0.5) {$a_i$};
  \draw node at (9, 1.5) {$\vdots$};
  \draw node at (9,2.5) {$a_{j+1}$};
  \draw node at (9,3.5)  {$a_{j}$};
  \draw node at (9, 4.5) {$a_{j-1}$};
  \draw node at (9, 5.5) {$\vdots$};
  \draw node at (9, 6.5) {$a_1$};

  \draw (6, 3.5) to[out=0, in=180] (8, -0.5);
  \draw (6, 2.5) to[out=0, in=180] (8, 3.5);
  \draw (6, 1.5) to[out=0, in=180] (8, 2.5);
  \draw (6, 0.5) to[out=0, in=180] (8, 1.5);
  \draw (6, -0.5) to[out=0, in=180] (8, 0.5);
  \draw (6, 4.5) to (8, 4.5);
  \draw (6, 6.5) to (8, 6.5);
  \draw (6, -1.5) to (8, -1.5);
  \draw (6, -3.5) to (8, -3.5);
  \end{tikzpicture}
\end{center}
    \caption{
    The interval braid $\sigma_i\sigma_{i-1}\cdots \sigma_{j}$.}
    \label{fig: interval braid}
\end{figure}

\section{Positroid Braids and Equivalences}\label{sec:RichardsonJuggling}

In this section we introduce positroid braids and start discussing equivalences between them. After setting up the necessary combinatorics in Subsection \ref{ssec:Comb}, we study positroid braids as follows:

\begin{itemize}
    \item[(i)] The Richardson braid is presented in Subsection \ref{ssec:RichardsonBraid} and the Juggling braid is presented in Subsection \ref{ssec:JugglingBraid}. The former is an $n$-stranded braid and the latter is a $k$-stranded braid.\\

    \item[(ii)] Subsection \ref{ssec:destabilization} shows how to relate the Richardson braid and the Juggling braid via a sequence of generalized destabilization moves, which are also discussed in that subsection.\\

    \item[(iii)] The Le braid is introduced in Subsection \ref{ssec:LeBraid}, which also establishes its relation with the Juggling braid, and the matrix braid is discussed in Subsection \ref{ssec:MatrixBraids}.
\end{itemize}


\subsection{Combinatorial Data}\label{ssec:Comb} Let us introduce the combinatorial data used to describe positroid braids. Fix two natural numbers $k,n\in\N$ such that $k\leq n$. There are four equivalent families of combinatorial objects indexing open positroid strata that we employ: certain pairs of permutations $u,w\in S_n$, certain bijections $f:\Z\lr\Z$, Le diagrams, and cyclic rank matrices. These are schematically depicted in Figure \ref{fig:CombinatorialData}. The object of this first subsection is to define part of these pieces of combinatorial data and review the bijections we will need. 

\begin{center}
	\begin{figure}[h!]
		\centering
		\includegraphics[width=\textwidth]{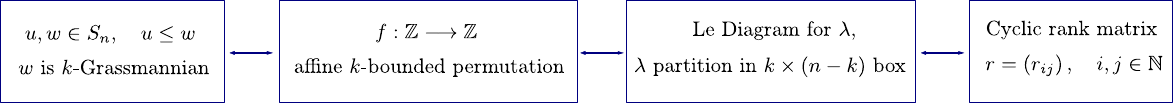}
		\caption{The four types of combinatorial data indexing positroid braids.}
		\label{fig:CombinatorialData}
	\end{figure}
\end{center}

\subsubsection{$k$-Grassmannian permutations and positroid pairs}
\label{subsec: w}
By definition, a permutation $w \in S_n$ is said to be \emph{$k$-Grassmannian} if
\[
w(1) < w(2) < \cdots < w(k), \mbox{ and } w(k+1) < \cdots < w(n).
\]
Similarly, $\Sh \in S_n$ is said to be a \emph{$k$-shuffle} if $\Sh^{-1}$ is $k$-Grassmannian, that is, if 
\[
\Sh^{-1}(1) < \Sh^{-1}(2) < \cdots < \Sh^{-1}(k), \mbox{ and } \Sh^{-1}(k+1) < \cdots < \Sh^{-1}(n).
\]


The set of $k$-Grassmannian permutations in $S_{n}$ (equivalently, the set of $k$-shuffles) is in bijection with the set of partitions $\lambda$ whose Young diagram fits inside a $k \times (n-k)$-rectangle, cf.~ \cite[Section 19]{Post}. We do not distinguish between a partition $\lambda$ and its Young diagram; we draw the latter in French notation. Thus, if the Young diagram of $\lambda$ fits inside a $k \times (n-k)$-rectangle, we write $\lambda \subseteq (n-k)^k$. Such $\lambda$ can be written as
\[
\lambda = (\lambda_{1}, \dots, \lambda_{k})
\]
where $n-k \geq \lambda_1 \geq \cdots \geq \lambda_{k} \geq 0$. Note that it is possible that $\la$ has a zero part, e.g.~ $\lambda_{k} = 0$ is allowed. Similarly, the transposed partition can be written as
\[
\lambda^{t} = (\lambda_{1}^{t}, \dots, \lambda_{n-k}^{t})
\]
where $k \geq \lambda_1^{t} \geq \cdots \geq \lambda_{n-k}^{t} \geq 0$. Again, it is allowed that $\lambda_{n-k}^{t} = 0$; this happens if and only if $n-k > \lambda_{1}$.

For $\lambda \subseteq (n-k)^{k}$, we denote by $w_{\lambda} \in S_{n}$ its associated $k$-Grassmannian permutation. By using one-line notation, we can write
\begin{equation}
\label{eq: w from lambda}
w_{\lambda} = [1 + \lambda_k, 2 + \lambda_{k-1}, \dots, k+\lambda_1, k+1-\lambda_{1}^{t}, k+2 -\lambda_2^{t}, \dots, n - \lambda_{n-k}^{t}].
\end{equation}
Note that the length of $w_{\lambda}$ is $\ell(w_{\lambda}) = |\lambda| := \la_1 + \ldots + \la_{k}$. In fact, we can obtain reduced decompositions for $w_{\lambda}$ as follows:
\begin{equation}\label{eqn:w lambda}
\begin{array}{rl}
w_{\lambda} = &  
(s_{\lambda_{k}}\cdots s_{1})\cdots (s_{k+\lambda_{2}-2}\cdots s_{k-1})(s_{k+\lambda_{1}-1}s_{k+\lambda_{1}-2}\cdots s_{k})
\\ = &  
(s_{n-\lambda_{n-k}^{t}}\cdots s_{n-1})\cdots (s_{k+2 - \lambda_2^{t}}\cdots s_{k}s_{k+1})(s_{k+1-\lambda_{1}^{t}}\cdots s_{k-1}s_{k}).
\end{array}
\end{equation}
 This expression can be read pictorially: we draw the Young diagram $\lambda$ and fill the box in row $i$ and column $j$ with the number $k + j - i$. The first reduced expression in Equation \ref{eqn:w lambda} above is obtained by reading this diagram by rows. The second reduced expression is obtained by reading it by columns. Throughout the paper, we use the convention that empty rows (with $\lambda_i=0$) do not contribute to the first product and the empty columns (with $\lambda^t_i=0$) do not contribute to the second product.

\begin{example}\label{ex:3x4} Let us consider the values $k=3$ and $n=7$ and the Young diagram $\lambda = (4, 3, 1)$. By filling the $(i,j)$-box of $\la$ with $3+(j-i)$ we obtain:
	\begin{center}
		\begin{tikzpicture}[scale=0.45]
		\draw (0,0)--(4,0)--(4,1)--(3,1)--(3,2)--(1,2)--(1,3)--(0,3)--(0,0);
		\draw (0,1)--(3,1);
		\draw (0,2)--(1,2);
		\draw (1,0)--(1,2);
		\draw (2,0)--(2,2);
		\draw (3,0)--(3,1);
		\draw (0.5,0.5) node {$3$};
		\draw (0.5,1.5) node {$2$};
		\draw (0.5,2.5) node {$1$};
		\draw (1.5,0.5) node {$4$};
		\draw (1.5,1.5) node {$3$};
		\draw (2.5, 0.5) node {$5$};
		\draw (2.5, 1.5) node {$4$};
		\draw (3.5, 0.5) node {$6$};
		\end{tikzpicture}
	\end{center}
The associated $3$-Grassmannian permutation is
$$w_{\lambda} = (s_1)(s_4s_3s_2)(s_6s_5s_4s_3)
= (s_6)(s_4s_5)(s_3s_4)(s_1s_2s_3),$$
and note that the length $\ell(w_\lambda)$ is indeed $|\la|=8$.\hfill$\Box$
\end{example}

We can also read the permutation $w_{\lambda}$ as follows. First, we identify a partition $\lambda \subseteq (n-k)^{k}$ with a sequence of $n$ vertical and horizontal steps, that start from the northwest corner of the rectangle and follow the shape of the partition until they reach the southeast corner. For example, the partition in Example \ref{ex:3x4} corresponds to the sequence $(H, V, H, H, V, H, V)$\footnote{Note that the sequence of steps depends on the size of the box $k \times (n-k)$ and not just on the partition $\la$. For example, the partition in Example \ref{ex:3x4} considered inside a $4 \times 4$ box yields the sequence $(V, H, V, H, H, V, H, V)$.}. Enumerate these steps consecutively along the border
of the partition. We will refer to these as the {\em right labels} of $\lambda$. We also enumerate the left border of the rectangle with the numbers $1, \dots, k$, reading top-to-bottom, and the bottom border with the numbers $k+1, \dots, n$, reading left-to-right. We refer to these as the {\em left labels} of $\lambda$.

For each vertical (resp. horizontal) step in the border of $\la$, draw a horizontal (resp. vertical) ray to the left (resp. down). The resulting diagram is known as the \emph{wiring diagram} of $\lambda$, and it gives the permutation $w_{\lambda}$ by mapping the left labels to the right labels
along the rays. 
See Figure \ref{fig:3x4}.

\begin{center}
    \begin{figure}[ht]
\begin{tikzpicture}[scale=0.6]
		\draw (0,0)--(4,0)--(4,1)--(3,1)--(3,2)--(1,2)--(1,3)--(0,3)--(0,0);
		\draw (0,1)--(3,1);
		\draw (0,2)--(1,2);
		\draw (1,0)--(1,2);
		\draw (2,0)--(2,2);
		\draw (3,0)--(3,1);
        \draw (4.2, 0.5) node { $\mathbf{7}$};
        \draw (3.5, 1.3) node {$\mathbf{6}$};
        \draw (3.2, 1.5) node {$\mathbf{5}$};
        \draw (2.5, 2.3) node {$\mathbf{4}$};
        \draw (1.5, 2.3) node {$\mathbf{3}$};
        \draw (1.2, 2.5) node {$\mathbf{2}$};
        \draw (0.5, 3.3) node {$\mathbf{1}$};
        
        \draw (3.5, -0.3) node{$7$}; 
        \draw (2.5, -0.3) node{$6$};
        \draw (1.5, -0.3) node{$5$};
        \draw (0.5, -0.3) node {$4$};
        \draw (-0.2, 0.5) node {$3$};
        \draw (-0.2, 1.5) node {$2$};
        \draw (-0.2, 2.5) node {$1$};

        \draw[<-] (0.5,3)--(0.5,0);
        \draw[<-] (1.5, 2)--(1.5,0);
        \draw[<-] (2.5, 2)--(2.5, 0);
        \draw[<-] (3.5, 1)--(3.5,0);

        \draw[<-] (1, 2.5)--(0,2.5);
        \draw[<-] (3, 1.5)--(0, 1.5);
        \draw[<-] (4, 0.5)--(0,0.5);
		\end{tikzpicture}
  \caption{
  In one-line notation, $w_{\lambda} = [2,5,7,1,3,4,6]$. Right labels shown in bold.}
  \label{fig:3x4}
    \end{figure}
\end{center}

\begin{definition}
A pair of permutations $(u, w)$ with $u, w \in S_{n}$ is said to be a \emph{$k$-positroid pair} if $u \leq w$ in the Bruhat order and $w$ is $k$-Grassmannian. A $k$-positroid pair will be simply referred to as a \emph{positroid pair} if $k$ is understood from context.
\hfill$\Box$
\end{definition}

\subsubsection{Le diagrams} In order to additionally record the data of $u$ in a positroid pair $(u, w)$, $u, w \in S_{n}$, one enhances the Young diagram $\la$ for $w$ into a {\it Le diagram}. Let us recall that the hook of a box in a Young diagram $\lambda$ consists of all the boxes above it (in the same column) as well as all the boxes to its right (in the same row). By definition, a Le diagram $\Le = (\lambda, P)$ consists of a partition $\lambda$ together with a collection $P$ of boxes of $\lambda$, such that any box belonging to the hooks of two different boxes in $P$ must also be in $P$. Pictorially, we depict a Le diagram $\Le$ as a partition $\lambda$ together with dots in some of its boxes, precisely in those that belong to $P$. We thus refer to boxes in $P$ as boxes \emph{with a dot}.

Fixing a partition $\la \subseteq (n-k)^{k}$, \cite[Theorem 19.1]{Post} shows that there is a bijection between Le diagrams with underlying partition $\la$ and elements $u \in S_{n}$ with $u \leq w_{\lambda}$. To a Le diagram $\Le = (\lambda, P)$, the bijection associates the permutation $u$ that is obtained by deleting the simple transpositions corresponding to boxes in $P$ in either of the reduced decompositions \eqref{eqn:w lambda} of $w_{\lambda}$. See Example \ref{ex: markov 44}. Thus, for each $k \in [n]$, there is a bijection between $k$-positroid pairs $(u, w)$ in $S_{n}$ and Le diagrams whose underlying partition fits into a $k \times (n-k)$-rectangle.  

\begin{example}\label{ex: markov 44} 
	Let us consider the values $(k,n)=(4,6)$ and the Young diagram $\lambda=(2,2,2,2)$. The associated permutation is $w_\la=(s_2s_3s_4s_5)(s_1s_2s_3s_4)$.
 Choose the permutation $u=(s_4)(s_2s_3) \in S_6$,
  which satisfies $u\leq w$. 
  The Le diagram associated to this pair $(u,w)$ is drawn on Figure \ref{figure:ex:markov 44}.(B). 
 In one-line notation we have 
 $$
 w^{-1}=[5,6,1,2,3,4],\ w=[3,4,5,6,1,2],
 $$
 $$
 u^{-1}=[1,4,2,5,3,6],\ u=[1,3,5,2,4,6]
 $$

\noindent Note that $w^{-1}$ is a $4$-shuffle in $S_6$.\hfill$\Box$
\begin{center}
\begin{figure}[ht] 
\begin{subfigure}[t]{0.45\textwidth}
\centering
\begin{tikzpicture}[scale = 0.6]
			\draw (0,0)--(0,4)--(2,4)--(2,0)--(0,0);
			\draw (0,1)--(2,1);
			\draw (0,2)--(2,2);
			\draw (0,3)--(2,3);
			\draw (1,0)--(1,4);
			\draw (0.5,0.5) node {$4$};
			\draw (0.5,1.5) node {$3$};
			\draw (0.5,2.5) node {$2$};
			\draw (0.5,3.5) node {$1$};
			\draw (1.5,0.5) node {$5$};
			\draw (1.5,1.5) node {$4$};
			\draw (1.5,2.5) node {$3$};
			\draw (1.5,3.5) node {$2$};
			\end{tikzpicture}
			\caption{The Young diagram $\lambda=(2,2,2,2)$ associated to $w_\la= (s_2s_1)(s_3s_2)(s_4s_3)(s_5s_4) = (s_2s_3s_4s_5)(s_1s_2s_3s_4).$}
			\end{subfigure}
			\hspace{0.3 in}
			\begin{subfigure}[t]{0.4\textwidth}
			\centering
				\begin{tikzpicture}[scale = 0.6]
		\draw (0,0)--(0,4)--(2,4)--(2,0)--(0,0);
		\draw (0,1)--(2,1);
		\draw (0,2)--(2,2);
		\draw (0,3)--(2,3);
		\draw (1,0)--(1,4);
		\draw (0.5,0.5) node {$\bullet$};
		
		\draw (0.5,3.5) node {$\bullet$};
		\draw (1.5,0.5) node {$\bullet$};
		
		\draw (1.5,2.5) node {$\bullet$};
		\draw (1.5,3.5) node {$\bullet$};
		\draw (0.5,4)--(0.5,0.5)--(2,0.5);
		\draw (0.5,3.5)--(2,3.5);
		\draw (1.5,0.5)--(1.5,4);
		\draw (1.5,2.5)--(2,2.5);
		\end{tikzpicture}
			\caption{The Le diagram associated to the pair $(u,w_\la),$ for $u = (s_2)(s_4s_3) = (s_4)(s_2s_3)$.}
		\end{subfigure}
		\caption{Constructing a Le diagram from a pair $(u, w).$} 
		\label{figure:ex:markov 44}
		\end{figure}
		\end{center}
 \end{example}

\noindent Note that from a Le diagram $\Le = (\lambda, P)$, we can read the permutation $u$ in a similar way to how we read the permutation $w$ from the Young diagram $\lambda$. Simply make the following change to any box with a dot:
\begin{center}
    \begin{tikzpicture}[scale=0.4]
\draw (0,0)--(2,0)--(2,2)--(0,2)--cycle;
\draw[<-] (2, 1)--(0,1);
\draw[<-] (1, 2) -- (1,0);
\draw[black, fill] (1,1) circle (2ex);

\draw[->, thick] (3,1)--(5,1);

\draw (6,0)--(8,0)--(8,2)--(6,2)--cycle;
\draw[black, fill] (7,1) circle (2ex);
\draw[<-] (8,1) to[out=180, in=90] (7,0);
\draw[<-] (7,2) to[out=270, in=0] (6,1);
    \end{tikzpicture}
\end{center}

\noindent In Example \ref{ex: markov 44} we have:
\begin{center}
	\begin{tikzpicture}[scale = 0.6]
		\draw (0,0)--(0,4)--(2,4)--(2,0)--(0,0);
		\draw (0,1)--(2,1);
		\draw (0,2)--(2,2);
		\draw (0,3)--(2,3);
		\draw (1,0)--(1,4);
		\draw (0.5,0.5) node {$\bullet$};
		
		\draw (0.5,3.5) node {$\bullet$};
		\draw (1.5,0.5) node {$\bullet$};
		
		\draw (1.5,2.5) node {$\bullet$};
		\draw (1.5,3.5) node {$\bullet$};

            \node at (-0.2, 3.5) {1};
            \node at (-0.2, 2.5) {2};
            \node at (-0.2, 1.5) {3};
            \node at (-0.2, 0.5) {4};
            \node at (0.5, -0.4) {5};
            \node at (1.5, -0.4) {6};

            \node at (0.5, 4.3) {$\mathbf{1}$};
            \node at (1.5, 4.3) {$\mathbf{2}$};
            \node at (2.2, 3.5) {$\mathbf{3}$};
            \node at (2.2, 2.5) {$\mathbf{4}$};
            \node at (2.2, 1.5) {$\mathbf{5}$};
            \node at (2.2, 0.5) {$\mathbf{6}$};

            \draw[->] (0, 3.5) to[out=0, in=270] (0.5, 4);
            \draw (0, 2.5) to (1, 2.5);
            \draw (1, 2.5) to[out=0, in=270] (1.5, 3);
            \draw[->] (1.5, 3) to[out=90, in=180] (2, 3.5);
            \draw[->] (0, 1.5) to (2, 1.5);
            \draw (0, 0.5) to[out=0, in=270] (0.5, 1) to (0.5, 3) to[out=90, in=180] (1, 3.5);
            \draw[->] (1, 3.5) to[out=0, in=270] (1.5, 4);
            \draw (0.5, 0) to[out=90, in=180] (1, 0.5) to[out=0, in=270] (1.5, 1) to (1.5, 2);
            \draw[->] (1.5, 2) to[out=90, in=180] (2, 2.5);
            \draw[->] (1.5, 0) to[out=80, in=180] (2, 0.5);
		\end{tikzpicture}
\end{center}
which coincides with $u = [1, 3, 5, 2, 4, 6]$. 

\subsubsection{Bounded affine permutations}\label{ssec:boundedaffinepermutation}Finally, let us discuss  $k$-bounded affine permutations of size $n$, following \cite{KLS}. By definition, an affine permutation $f:\Z\to \Z$ of size $n$ is a bijection such that $f(i+n)=f(i)+n$ for all $i\in\Z$; we often denote affine permutations in one-line window notation $f=[f(1)\ldots f(n)]$. By definition, an affine permutation is said to be {\it $k$-bounded} if the following conditions are satisfied:
$$i\le f(i)\le i+n,\quad i\in\Z\quad\mbox{ and } \quad \sum_{i = 1}^{n}(f(i) - i) = nk.$$
\noindent 
By \cite[Prop.~3.15]{KLS}, a $k$-bounded affine permutation $f$ admits a unique decomposition of the form
\begin{equation}\label{eq: dec bounded affine permutation}
f = u_ft_{k}w_f^{-1},\qquad \mbox{where }t_k:=[1 + n, 2+n, \dots, k+n, k+1, k+2, \dots, n],
\end{equation}

\noindent with $(u_f,w_f)\in S_n$ a positroid pair. This is to say, any $k$-bounded affine composition admits a unique decomposition of the form
\[
f = \mbox{\cyr{U}}^{-1}t_k\Sh
\]
where $\Sh$ is a $k$-shuffle permutation and {\cyr{U}} $\leq \Sh$.

Let us provide an explicit description of the permutations $u_f, w_f$ appearing in \eqref{eq: dec bounded affine permutation}. For that, we note that there exist exactly $k$ values $i_{1} < i_{2} < \ldots < i_{k}$ of $i_r\in[1,n]$ such that $n<f(i_r)$, and exactly $(n-k)$ values $j_{1} < j_{2} < \cdots < j_{n-k}$ of $j_t\in[1,n]$ such that $f(j_t) \leq n$. 
The permutations $u_f,w_f$ are then described as follows:
\begin{equation}
\label{eq: w from f}
w_f := [i_{1}, i_{2}, \dots, i_{k}, j_{1}, j_{2}, \dots, j_{n-k}],
\end{equation}
\begin{equation}
\label{eq: u from f}
u_f := [f(i_{1}) - n, \dots, f(i_{k}) - n, f(j_{1}), \dots, f(j_{n-k})].
\end{equation}
\noindent Note that $w_f$ is a $k$-Grassmannian permutation and $u \in S_{n}$, since $n < f(i_{r}) \leq 2n$ for every $r\in[1,k]$. The permutations $u_f,w_f$ 
coincide with the permutations  in \cite[Proposition 3.15]{KLS}. \footnote{Our notation coincides with that of \cite{KLS}. Note that what \cite{GL} calls a $k$-Grassmannian permutation is what we call a $k$-shuffle, so the decomposition in \cite[Proposition 4.2]{GL} is in fact the decomposition $f = \mbox{\cyr{U}}^{-1}t_k\Sh$.}

\begin{example}
First, for the trivial $n$-translation $f=t_{k} = [1+n, \dots, k+n, k+1, \dots, n]$, we have $(i_{1}, \dots, i_{k}) = (1, \dots, k)$ and thus $w_f = [1, 2, \dots, n]$. Similarly, $u_f = [1, 2, \dots, n]$ is also the identity. 

Second, for the $k$-bounded permutation $f$ defined by $f(i)=i+k$, we obtain that $(i_{1}, \dots, i_{k}) = (n-k+1, \dots, n)$ and hence $w_f = [n-k+1, \dots, n, 1, \dots, n-k]$ 
is the maximal $k$-Grassmannian permutation. In this second case, the permutation $u_f = [1, 2, \dots, n]$ is still the identity.\hfill$\Box$
\end{example}

\noindent Next, we record some facts translating the notations between Le diagrams and bounded affine permutations. 

\begin{lemma}
\label{lem: Le and f}
Suppose that $u\le w$ correspond both to the Le diagram of shape $\lambda$ and to the bounded affine permutation $f$, so that $f = ut_{k}w^{-1}$. Let $i_1,\ldots,i_k$ and $j_1,\ldots,j_{n-k}$ be as above. Then:

a) $i_1=1+\lambda_k, i_2 = 2 + \lambda_{k-1}, \ldots,i_k=k+\lambda_1$ and  $j_1=k+1-\lambda_1^t, j_2 = k+2-\lambda_{2}^{t}\ldots, j_{n-k}=n-\lambda_{n-k}^t$.

b) If the Le diagram for $u\leq w$ has no dots in the rightmost column then 
$$f(j_{n-k})=u(n)=w(n)=j_{n-k}.$$ Otherwise $f(j_{n-k})=u(n)=n-d+1$, with $d$ the row number of the lowest dot in the last column.
\end{lemma}

\begin{proof}
Part (a) follows by comparing the two formulas  \eqref{eq: w from lambda} and \eqref{eq: w from f} for $w$ in one-line notation. The first part of Part (b) holds by construction.  To prove the second part, observe that 
$$
w=(s_{n-\lambda_{n-k}^t}\cdots s_{n-1})\cdots (s_{k+1-\lambda_1^{t}}\cdots s_{k})
$$
and that we have
$
u=(as_{n-d+1}\cdots s_{n-1})b
$,
where $b$ does not contain $s_{n-1}$ and $a$ is a subword of $(s_{n-\lambda_{n-k}^t}\cdots s_{n-d-1})$.
Therefore $u(n)=n-d+1$.
\end{proof}

Lemma \ref{lem: Le and f} above gives us a direct way for moving from the Le diagram to the corresponding bounded affine permutation. Let us fix a Le diagram $\Le$ with the underlying partition $\lambda$. Recall the left and right labels for $\lambda$ from Section \ref{subsec: w}.
It follows from identity \eqref{eq: w from f} that $i_{1}, \dots, i_{k}$ are the right labels corresponding to the \emph{vertical} steps of $\la$, while $j_1, \dots, j_{n-k}$ are the right labels that correspond to the horizontal steps. 

Now we consider the wiring diagram of $\Le$. 
The following two facts are a consequence of \eqref{eq: u from f}.

\begin{itemize}
\item[(a)] Consider the wire starting at the bottom of the $s$-th column of $\Le$ with the left label $s+k$. Then, $f(j_{s})$ is the right label of the endpoint of this wire.
\item[(b)] Consider the wire starting on the left of the $\ell$-th row of $\Le$ with the left label $\ell$. Then, $f(i_{\ell}) = n + t$, where $t$ is the right label of the endpoint of this wire. 
\end{itemize}

\noindent Note that (a) and (b) imply that the dotless columns of $\Le$ correspond to the fixed points of $f$; while the dotless rows correspond to $x \in \{1, \dots, n\}$ satisfying $f(x) = x+n$.  See Figure \ref{fig: wiring diagram} for an example.
\begin{figure}[h!]
\centering
				\begin{tikzpicture}[scale = 0.8]
		\draw (0,0)--(0,4)--(2,4)--(2,0)--(0,0);
		\draw (0,1)--(2,1);
		\draw (0,2)--(2,2);
		\draw (0,3)--(2,3);
		\draw (1,0)--(1,4);
		\draw (0.5,0.5) node {$\bullet$};
		
		\draw (0.5,3.5) node {$\bullet$};
		\draw (1.5,0.5) node {$\bullet$};
		
		\draw (1.5,2.5) node {$\bullet$};
		\draw (1.5,3.5) node {$\bullet$};

  \draw node at (0.5, 4.3) {$1$};
  \draw node at (1.5, 4.3) {$2$};
  \draw node at (2.2, 3.5) {$3$};
  \draw node at (2.2, 2.5) {$4$};
  \draw node at (2.2, 1.5) {$5$};
  \draw node at (2.2, 0.5) {$6$};

  \draw[->, color=blue] (0, 3.5) to[out=0, in=270] (0.5, 4);
  \draw[->, color=blue] (0, 2.5) to (1, 2.5) to[out=0,in=270] (1.5, 3) to [out=90, in=180] (2, 3.5);
  \draw[->, color=blue] (0, 1.5) to (2, 1.5);
  \draw[->, color=blue] (0,0.5) to[out=0,in=270] (0.5, 1) to (0.5, 3) to[out=90, in=180] (1, 3.5) to[out=0, in=270] (1.5, 4);

  \draw[->, color=red] (0.5, 0) to[out=90, in=180] (1, 0.5) to[out=0, in=270] (1.5, 1) to (1.5, 2) to[out=90, in=180] (2, 2.5);
  \draw[->, color=red] (1.5, 0) to[out=90, in=180] (2, 0.5);	
		\end{tikzpicture}

  \caption{The wiring diagram for the Le diagram of Example \ref{ex: markov 44}. If $f$ is the corresponding bounded affine permutation, then $f(1), f(2) < n$, while $f(3), f(4), f(5), f(6) > n$. 
  Moreover, to find $f(1)$ we follow the red strand starting opposite to $1$, and see that $f(1) = 4$. Similarly, $f(2) = 6$. Likewise, to find $f(3)$ we follow the blue strand starting opposite to $3$, and see that $f(3) = 1 + 6$. Similarly, $f(4) = 3+6$, $f(5) = 5+6$, $f(6) = 2+6$.
  Note that the only dotless row has the right label $5$: this corresponds to $5$ being the only $x \in \{1, \dots, 6\}$ satisfying $f(x) = x+6$.}
    \label{fig: wiring diagram}
\end{figure}

For $1 \le s \le n-k$ we define
\begin{equation}
\inv(s):=\sharp\{t>s: f(j_t)<f(j_s)\}.
\end{equation}

\begin{lemma}
\label{lem: inv}
For all $s\in [1,n-k]$, we have the inequalities
$$
j_s\le f(j_s)-\inv(s)\le k+s
$$
The first inequality is sharp unless $f(j_s)=j_s$.
\end{lemma}

\begin{proof}
If $t>s$ then $j_t>j_s$. If $f(j_t)<f(j_s)$ then
$$
j_s<j_t\le f(j_t)<f(j_s),
$$
and since $f$ is injective all such $f(j_t)$ are distinct. Therefore either $f(j_s)=j_s$ and $\inv(s)=0$, or $\inv(s)\le f(j_s)-j_s-1$, so $f(j_s)-\inv(s)\ge j_s+1$.

For the second inequality, observe that 
\begin{equation}\label{eq: noninversions}
\sharp\{t>s: f(j_t)>f(j_s)\}=n-k-s-\inv(s),
\end{equation}
so we have at least $n-k-s-\inv(s)$ distinct integers between $f(j_s)+1$ and $n$. Therefore
$f(j_s)+n-k-s-\inv(s)\le n$ and $f(j_s)-\inv(s)\le k+s$. 
\end{proof}

\subsection{Richardson Braids}\label{ssec:RichardsonBraid}

Given a permutation $v\in S_n$, we will denote by $\beta(v)\in\Br_n^+$ its reduced positive braid lift to the $n$-stranded braid group. A positive braid word for the braid $\beta(v)\in\Br_n^+$, which we also denote by $\beta(v)\in\cB_n^+$, 
 is obtained by considering a reduced expression $v=s_{i_1}\ldots s_{i_{\ell(v)}}$ for $v\in S_n$ in terms of a product of the simple transpositions $s_1,\ldots,s_{n-1}\in S_n$ generating the symmetric group $S_n$ and substituting each $s_i$ by the Artin braid generator $\sigma_i$, $i\in[1,n]$, i.e. $\beta(v)=\s_{i_1}\ldots \s_{i_{\ell(v)}}$. The braid word $\beta(v)$ depends on the choice of a reduced expression of $v$, but all such words for a given $v$ are related by braid relations $\sigma_i\sigma_{i+1}\sigma_i=\sigma_{i+1}\sigma_i\sigma_{i+1}$, a.k.a.~Reidemester III moves, and commutation relations, and thus represent the same braid. Recall the notation $v^* := w_0 v w_0$.


\begin{definition}[Richardson braid]\label{def:Richardson}
Let $u,w\in S_n$ be two permutations such that $u\leq w$ in the Bruhat order. The Richardson braid word $R_n(u,w)\in\cB_n$ associated to the pair $(u,w)$ is
$$R_n(u,w):=\beta(u^*)^{-1}\beta(w^*).$$
By definition, the smooth Richardson link is the smooth link $\La(u,w)\sse\R^3$ given by the $0$-framed closure of the $n$-stranded braid word $R_{n}(u, w)$.\hfill$\Box$
\end{definition}

\noindent Definition \ref{def:Richardson} is inspired by \cite[Section 1.5]{GL}. The reason we have to conjugate by $w_0$ is Corollary \ref{cor:richardson}. See also Remark \ref{rem:different isomorphisms}. Let us also define a version of the Richardson braid with only positive Artin generators, together with its associated smooth link:

\begin{definition}[Positive Richardson braid]\label{def:pos Richardson}
Let $u, w \in S_n$ be two permutations such that $u \leq w$ in the Bruhat order. The positive Richardson braid word $R^{+}_{n}(u, w)\in\cB_n$ associated to the pair $(u, w)$ is
\[
R^{+}_{n}(u, w) := \beta(u^{-1}w_0)\beta(w^*).
\]
By definition, the positive Richardson link is the smooth link $\Lambda^{+}(u, w) \subseteq \R^{3}$ associated to the $0$-framed closure of the $n$-stranded braid word $\Delta^{-1}_{n}R^{+}_{n}(u, w)$.\hfill$\Box$
\end{definition}

\begin{remark}
We emphasize that the braid words in Definitions \ref{def:Richardson} and \ref{def:pos Richardson} depend on the choice of reduced expressions of $u$ and $w$, but the braids $R_n(u, w), R^{+}_{n}(u, w)$, and $\Delta^{-1}_{n}R^{+}_{n}(u, w)$ depend only on the pair $(u, w)$. The smooth links $\La(u,w)$ and $\La^{+}(u, w)$ thus also depend only on the pair $(u, w)$.

By construction, the smooth links $\La(u,w)$ and $\La^{+}(u, w)$ are smoothly isotopic. Indeed, note that we can find a reduced expression 
$\Delta_n = \beta(u^{-1}w_0)\beta(u^*)$, so that
$$
\Delta_n^{-1}=\beta(u^*)^{-1}\beta(u^{-1}w_0)^{-1}
$$
and it follows that, in the braid group $\Br_n$, the braids $\Delta_n^{-1}R^{+}_n(u,w)$ and $R_n(u,w)$ are equivalent. 
\hfill$\Box$
\end{remark}

\subsection{Juggling Braid}\label{ssec:JugglingBraid}
Let $f$ be a $k$-bounded affine permutation. In this section, we construct a braid $J_k(f)$ on $k$-strands, called the \emph{juggling braid} of $f$, and provide an explicit braid word for it.

Given a $k$-bounded affine permutation $f: \Z \lr \Z$, we picture it as follows. We consider the plane $\R^2$ with Cartesian coordinates $(x,y)$. For each number $i \in \Z$, we join $i$ to $f(i)$ using the upper-circumference arc $A_{f(i)}(f)$\footnote{For convenience, cf.~Definition \ref{def:juggling}, we choose to label the upper-circumference arc $A_{f(i)}(f)$ by its rightmost point.}:
\[A_{f(i)}(f)=\left\{(x,y)\in\R^2: \left(x - \frac{i+f(i)}{2}\right)^{2}+y^2=\frac{(f(i)-i)^2}{4}\right\}\cap\{y\geq0\}\sse\R^2.\]
If $f(i) = i$, then $A_{i}(f) = A_{f(i)}(f)$ is an arc of radius zero that we represent by a dot at $(i, 0)$. The union of the arcs $A_{i}$, $i \in \Z$ is referred to as the \emph{affine juggling diagram} of $f$, cf. \cite{KLS}.

\begin{example} \label{ex:affine juggling}
Let us choose $k = 3, n = 7$ and $f = [3,4,9,6,7,12,8]$. Then the affine juggling diagram has the following form. 
\begin{center}
		\begin{tikzpicture}[scale=0.7]
            \draw (-4.1, -0.2) node {\scriptsize{-4}};
            \draw (-3.1,-0.2) node {\scriptsize{-3}};
		\draw (-2.1,-0.2) node {\scriptsize{-2}};
		\draw (-1.1,-0.2) node {\scriptsize{-1}};
		\draw (0,-0.2) node {\scriptsize{0}};
		\draw (1,-0.2) node {\scriptsize{1}};
		\draw (2,-0.2) node {\scriptsize{2}};
		\draw (3,-0.2) node {\scriptsize{3}};
		\draw (4,-0.2) node {\scriptsize{4}};
		\draw (5,-0.2) node {\scriptsize{5}};
		\draw (6,-0.2) node {\scriptsize{6}};
		\draw (7,-0.2) node {\scriptsize{7}};
		\draw (8,-0.2) node {\scriptsize{8}};
		\draw (9,-0.2) node {\scriptsize{9}};
		\draw (10,-0.2) node {\scriptsize{10}};
		\draw (11,-0.2) node {\scriptsize{11}};
		\draw (12,-0.2) node {\scriptsize{12}};
		\draw (13,-0.2) node {\scriptsize{13}};
		\draw (14,-0.2) node {\scriptsize{14}};
		\draw [->] (-4.5,0)--(14.5,0);
		\draw (3,0) arc (0:180:1);
		\draw (4,0) arc (0:180:1);
		\draw (8,0) arc (0:180:0.5);
		\draw (6,0) arc (0:180:1);
		\draw (7,0) arc (0:180:1);
		\draw (9,0) arc (0:180:3);
		\draw (12,0) arc (0:180:3);
            \draw (1,0) arc (0:180:0.5);
            \draw (5,0) arc (0:180:3);
            \draw (0,0) arc (0:180:1);
            \draw (-1,0) arc (0:180:1);
            \draw (10,0) arc (0:180:1);
            \draw (11,0) arc (0:180:1);
            \draw (2,0) arc (0:180:3);

            \node at (15, 0) {$\dots$};
		\end{tikzpicture}
	\end{center}
\end{example}

Since the function $f$ is $n$-periodic, so is its affine juggling diagram. Thus, to recover the affine juggling diagram it is enough to consider the arcs $A_{j}, j = 1, \dots, n$, whose \emph{rightmost} points are at $(j,0)$ with $j = 1, \dots, n$. The union of all such arcs will be referred to as simply the \emph{juggling diagram} of $f$. Moreover, we orient the arcs in a \emph{counterclockwise direction}. 

\begin{example}\label{ex: juggling}
The juggling diagram of the function $f$ from Example \ref{ex:affine juggling} is

\begin{center}
		\begin{tikzpicture}[scale=0.7]

            \draw (-4.1, -0.2) node {\scriptsize{-4}};
            \draw (-3.1,-0.2) node {\scriptsize{-3}};
		\draw (-2.1,-0.2) node {\scriptsize{-2}};
		\draw (-1.1,-0.2) node {\scriptsize{-1}};
		\draw (0,-0.2) node {\scriptsize{0}};
		\draw (1,-0.2) node {\scriptsize{1}};
		\draw (2,-0.2) node {\scriptsize{2}};
		\draw (3,-0.2) node {\scriptsize{3}};
		\draw (4,-0.2) node {\scriptsize{4}};
		\draw (5,-0.2) node {\scriptsize{5}};
		\draw (6,-0.2) node {\scriptsize{6}};
		\draw (7,-0.2) node {\scriptsize{7}};
		\draw (8,-0.2) node {\scriptsize{8}};
		\draw [->] (-4.5,0)--(8.5,0);
		\draw[->] (3,0) arc (0:180:1);
		\draw[->] (4,0) arc (0:180:1);
		\draw[->] (6,0) arc (0:180:1);
		\draw[->] (7,0) arc (0:180:1);
            \draw[->] (1,0) arc (0:180:0.5);
            \draw[->] (5,0) arc (0:180:3);
            \draw[->] (2,0) arc (0:180:3);

		\end{tikzpicture}
	\end{center}
\end{example}



 By virtue of $f$ being a $k$-bounded affine permutation, there exist exactly $k$ values $1 \leq i_1 < \ldots < i_k\leq n$  such that $n< f(i_s)\le 2n$. Equivalently, $0 < f(i_s) - n \leq n$, and $f^{-1}(f(i_s) - n) = i_s - n \leq 0$. Thus, we obtain $k$ arcs $A_{f(i_1) - n}, \dots, A_{f(i_k) - n}$ in the juggling diagram of $f$ whose leftmost point is at $(i, 0)$ with $i \leq 0$. 
 We call these upper-circumferences \emph{special}. The juggling braid is defined via a tangle diagram obtained from the juggling diagram, as follows:

\begin{definition}[Juggling Braid]\label{def:juggling}  Let $f: \Z\lr\Z$ be a $k$-bounded affine permutation of size $n$, and consider the juggling diagram for $f$ as defined above, which is the union of all the arcs of non-zero radius among $A_{1}, \dots, A_{n}$ oriented in the counterclockwise direction. By definition, the juggling braid $J_k(f)$ is the braid defined by this tangle, declaring all the crossings between these arcs to be positive and smoothing the intersections with the $x$-axis according to the local models depicted in Figure \ref{fig:JugglingBraid}.
\end{definition}




\begin{center}
	\begin{figure}[h!]
		\centering
		\includegraphics[scale=0.6]{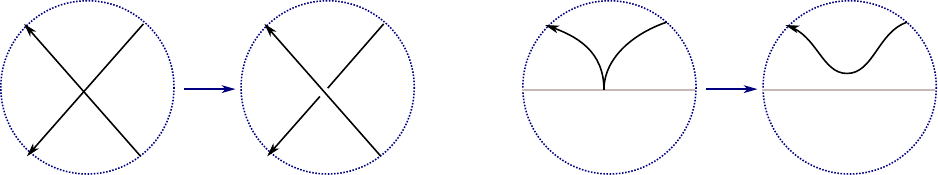}
		\caption{Local models constructing the juggling braid from the juggling diagram.}
		\label{fig:JugglingBraid}
	\end{figure}
\end{center}

\begin{remark}
Let us comment on the number of strands of $J_k(f)$, as well as their labeling. Note that the elements $i_1, \dots, i_k$ satisfying $f(i_k) > n$ are precisely those points which are \emph{not} the leftmost point of an arc in the juggling diagram of $f$. We thus have $k$ strands and with initial points $i_1, \dots, i_k$.
We label the strands so that the $j$-th strand is precisely the strand whose initial point is $i_{k-j+1}$. 
\end{remark}

To see that $J_k(f)$ has precisely $k$ strands we can alternatively think of a juggler with $k$ balls and think of the arcs in $J_k(f)$ as being the trajectories for these balls while being juggled. Here $x$ denotes the time coordinate and $y$ denotes the height of a ball. This implies the following simple fact:

\begin{prop}
\label{prop: vertical line test}
Each vertical line with non-integer coordinate intersects $J_k(f)$ in at most $k$ arcs, which correspond to distinct strands of the juggling braid.\hfill$\Box$
\end{prop}

\begin{example}\label{ex: running 1}
The following will be our running example. Consider the $4$-bounded affine permutation of rank $6$, $f = [4,6,7,9,11,8]$. The juggling diagram of $f$ is as follows: 

\begin{center}
		\begin{tikzpicture}[scale=0.7]

             \draw (-3.1, -0.2) node {\scriptsize{-3}};
             \draw (-2.1, -0.2) node {\scriptsize{-2}};
             \draw (-1.1, -0.2) node {\scriptsize{-1}};
            \draw (0, -0.2) node {\scriptsize{0}};
		\draw (1,-0.2) node {\scriptsize{1}};
		\draw (2,-0.2) node {\scriptsize{2}};
		\draw (3,-0.2) node {\scriptsize{3}};
		\draw (4,-0.2) node {\scriptsize{4}};
		\draw (5,-0.2) node {\scriptsize{5}};
		\draw (6,-0.2) node {\scriptsize{6}};
		\draw (7,-0.2) node {\scriptsize{7}};
		\draw (8,-0.2) node {\scriptsize{8}};
		\draw [->] (-4,0)--(8.5,0);
            \draw[color=red] (4,0) arc (0:180:1.5);
            \draw[color=red] (1,0) arc (0:180:2);

            \draw[color=blue] (6,0) arc (0:180:2);
            \draw[color=blue] (2,0) arc (0:180:1);

            \draw[color=cyan] (3,0) arc (0:180:2.5);

            \draw[color=magenta] (5,0) arc (0:180:3);

		\end{tikzpicture}
	\end{center}

 And thus the juggling braid is as follows:

 \begin{center}
    	\begin{tikzpicture}[scale = 0.6]


  \draw[color=blue] (6, 3.5) to[out=0, in=180] (8, 0.5);
  \draw[color=magenta] (6, 2.5) to[out=0, in=180] (8, 3.5);
  \draw[color=red] (6, 1.5) to[out=0, in=180] (8, 2.5);
  \draw[color=cyan] (6, 0.5) to[out=0, in=180] (8, 1.5);

    \draw[color=red] (8,2.5) to[out=0, in=180] (10,0.5);
    \draw[color=magenta] (8,3.5) to[out=0, in=180] (10, 3.5);
    \draw[color=cyan] (8, 1.5) to[out=0, in=180] (10,2.5);
    \draw[color=blue] (8, 0.5) to [out=0, in=180] (10,1.5);

    \draw[color=red] (10,0.5) to[out=0, in=180] (12, 3.5);
    \draw[color=blue] (10,1.5) to[out=0, in=180] (12, 0.5);
    \draw[color=cyan] (10, 2.5) to [out=0, in=180] (12, 2.5);
    \draw[color=magenta] (10, 3.5) to [out=-20, in=180] (12, 1.5);

  \end{tikzpicture}
  \end{center}
\end{example}

\begin{remark}
One can use parabolas, which correspond to actual juggling trajectories, or other curves instead of circles in the definition of $J_k(f)$. As long as these curves are convex and each pair of curves intersects at most once, the resulting braids are all related by Reidemeister III moves.
\hfill$\Box$
\end{remark}

Thanks to the previous remark, we may assume that all the crossings between special arcs come \emph{after} all other crossings. This means that we have a decomposition
\[
J_k(f) = \Jb_k(f)\Ja_k(f)
\]
where $\Jb_k(f)$ records the crossings between special arcs, and $\Ja_k(f)$ records all other crossings. Note that by definition, $\Jb_k(f)$ is a reduced $k$-stranded braid. In our running Example \ref{ex: running 1} we have:

 \begin{center}
    	\begin{tikzpicture}[scale = 0.6]


  \draw[color=blue] (6, 3.5) to[out=0, in=180] (8, 0.5);
  \draw[color=magenta] (6, 2.5) to[out=0, in=180] (8, 3.5);
  \draw[color=red] (6, 1.5) to[out=0, in=180] (8, 2.5);
  \draw[color=cyan] (6, 0.5) to[out=0, in=180] (8, 1.5);

    \draw[color=red] (8,2.5) to[out=0, in=180] (10,0.5);
    \draw[color=magenta] (8,3.5) to[out=0, in=180] (10, 3.5);
    \draw[color=cyan] (8, 1.5) to[out=0, in=180] (10,2.5);
    \draw[color=blue] (8, 0.5) to [out=0, in=180] (10,1.5);

    \draw[color=red] (10,0.5) to[out=0, in=180] (12, 3.5);
    \draw[color=blue] (10,1.5) to[out=0, in=180] (12, 0.5);
    \draw[color=cyan] (10, 2.5) to [out=0, in=180] (12, 2.5);
    \draw[color=magenta] (10, 3.5) to [out=-20, in=180] (12, 1.5);

    \draw[dashed] (9.5,-1) to (9.5,4);

    \node at (7.5, -1) {$J^{(1)}(f)$};
    \node at (11, -1) {$J^{(2)}(f)$};

  \end{tikzpicture}
  \end{center}

Finally, we define the link associated to the juggling diagram.

\begin{definition}
Let $f$ be a $k$-bounded affine permutation. The juggling link $\La(f) \sse \R^3$ is the smooth $0$-framed closure of $\Delta_{k}^{-1}J_k(f)$.
\end{definition}


\subsubsection{The braid $J_{k}^{(1)}(f)$} In this section, we express the braid $\Ja_{k}(f)$ as a product of $(n-k)$ interval braids, some of which may be empty, that naturally correspond to the columns of the Le diagram $\Le$ corresponding to $f$. 
As defined above, a crossing in $J_{k}(f)$ belongs to the braid $\Ja_{k}(f)$ if and only if at least one of the arcs involved in the crossing is not special. Here is the construction.

\begin{definition}
Suppose that $s<t<u<f(s)<f(t)<f(u)$, so that the arcs $A_{f(s)},A_{f(t)}$ and $A_{f(u)}$ pairwise intersect. We say that $A_{f(s)},A_{f(t)}$ and $A_{f(u)}$ are in {\em good position} if $A_{f(s)}\cap A_{f(u)}$ is inside $A_{f(t)}$. Equivalently, $A_{f(s)}\cap A_{f(t)}$ is outside $A_{f(u)}$ and $A_{f(t)}\cap A_{f(u)}$ is outside $A_{f(s)}$ (see Figure \ref{fig: inversion triple}). Otherwise we say that these three arcs form an \emph{inversion triple}. We say that a juggling diagram is in good position if all triples of arcs as above are in good position.\hfill$\Box$
\end{definition}

\begin{figure}[ht!]
\begin{tikzpicture}
\draw (4,0) arc (0:180:1.5); 
\draw (6,0) arc (0:180:2); 
\draw (7,0) arc (0:180:2); 

\draw (13,0) arc (0:180:1.5); 
\draw (13.5,0) arc (0:180:1.5); 
\draw (14.7,0) arc (0:180:2); 

\draw (1,-0.2) node {$s$};
\draw (2,-0.2) node {$t$};
\draw (3,-0.2) node {$u$};
\draw (4,-0.2) node {$f(s)$};
\draw (6,-0.2) node {$f(t)$};
\draw (7,-0.2) node {$f(u)$};

\draw (10,-0.2) node {$s$};
\draw (10.5,-0.2) node {$t$};
\draw (10.8,-0.2) node {$u$};
\draw (12.9,-0.2) node {$f(s)$};
\draw (13.6,-0.2) node {$f(t)$};
\draw (14.7,-0.2) node {$f(u)$};
\end{tikzpicture}
\caption{Good position (left) and inversion triple (right).}
\label{fig: inversion triple}
\end{figure}

\begin{lemma}
Up to braid moves, we can draw a juggling diagram for $J_k^{(1)}$ in good position.
\end{lemma}

\begin{proof}
First, consider a reduced braid with strands labeled on the left. Following \cite{Elias} (see also \cite{ManinS}), we denote by $(s|t)$ a crossing between the strands labeled by $s$ and $t$. Three strands labeled by $s, t$, and $u$ form an inversion triple if $s<t<u$ and the crossings between the respective strands appear from left to right in the order $(t|u),(s|u),(s|t)$ (called the {\em antilexicograhic order} in \cite{Elias}). 
Now \cite[Proposition 3.15, Corollary 3.16]{Elias} state that any reduced braid can be transformed via braid moves to a unique braid word without inversion triples. Furthermore, this can be achieved by braid moves which always reduce the number of inversion triples.

Our braid $J_k^{(1)}$ is not reduced, but any two arcs intersect at most once.
Therefore any collection of arcs where each arc belongs to a different strand of $J_k^{(1)}$ is reduced, and we can repeatedly apply the above result and Proposition \ref{prop: vertical line test} as we scan the braid from right to left. This would ensure that the number of inversion triples to the right of a given vertical line  can be decreased to zero, and eventually we will eliminate all inversion triples.
\end{proof}

To describe the braid $J_k^{(1)}$, we use the Le diagram $\Le$ corresponding to $f$ and the notations $i_1,\ldots,i_k, j_1,\ldots,j_{n-k}$ as above. In particular, $f(i_s)>n$ and $A_{f(i_s)-n}$ are special arcs, while $f(j_t)\le n$ and $A_{f(j_t)}$ are non-special arcs. Moreover, $A_{i_1}, \dots, A_{i_k}$ are the initial arcs of the $k$ strands of $J_{k}^{(1)}$. 

\begin{lemma}
\label{lem: right end}
For $t>s \geq 0$, we have $f(j_t)\in \{i_1, \dots, i_k, j_{n-k}, \dots, j_{s+1}\}.$
\end{lemma}

\begin{proof}
Note that $w=[i_1,\ldots,i_k,j_{1},\ldots,j_{n-k}]$ is a permutation of $[1,\ldots,n]$, hence 
$$
\{1,\ldots,n\}=\{j_1,\ldots,j_s\}\sqcup \{i_1, \dots, i_k, j_{n-k}, \dots, j_{s+1}\}.
$$ 
For $t>s$, we have $f(j_t)\ge j_t>j_s$, so 
$f(j_t)\notin \{j_1,\ldots,j_s\}$.
\end{proof}

We define the sets
\begin{equation}
\label{eq: def tuple}
T_{n-k-s} := \{i_1, \dots, i_k, j_{n-k}, \dots, j_{s+1}\} \setminus \{f(j_{n-k}), \dots, f(j_{s+1})\},\ s=0,\ldots,n-k.
\end{equation}
By Lemma \ref{lem: right end} these have exactly $k$ elements for each $s$.
Note that if $j_s$ is a fixed point of $f$, then $j_s \not\in T_{i}$ for all $i$. Also note that
\[
T_{0}=\{i_1,\ldots,i_k\},
\]
\[T_{n-k} = \{i_1, \dots, i_k, j_{n-k}, \dots, j_1\}\setminus \{f(j_{n-k}), \dots, f(j_1)\} = \{f(i_1) - n, \dots, f(i_k) - n\},
\] 
and
\[
T_{n-k-s+1}=\left(T_{n-k-s}\cup\{j_s\}\right)\setminus \{f(j_s)\}
\]
so that, if $j_s$ is a fixed point of $f$, we get $T_{n-k-s+1} = T_{n-k-s}$. 


\begin{lemma}
\label{lem: j1 from intervals} In the notation above, the following hold:

\begin{itemize}
    \item[(a)] Assume that $J_k^{(1)}$ is in good position. Then it can be written as a product of braids 
$$
J_k^{(1)}=I_1\cdots I_{n-k},
$$
where $I_s$ is given by the crossings of the arc $A_{f(j_s)}$ with arcs $A_{f(j_t)}$, $t<s$, 
as well as all the crossings of $A_{f(j_s)}$ with the special arcs.

\item[(b)] If $f(j_s)=j_s$ then the braid $I_s$ is trivial.

\item[(c)] If $f(j_s)>j_s$, label the strands by the set $T_{n-k-s}$ on the left, and by $T_{n-k-s+1}$ on the right, in decreasing order from top to bottom.
Then the braid $I_s$ is the interval braid which connects  $f(j_s)$ on the left to $j_s$ on the right, and all other elements of $T_{n-k-s+1}$ to themselves.
\end{itemize}
\end{lemma}




\begin{proof}
For Part (a), the definition of good position implies that all crossings between $A_{j_t}$ and $A_{j_{t'}}$ with $t,t'>s$ (resp. $t,t'<s$) are located to the right (resp. left) of $A_{j_s}$. Furthermore, if $t<s<t'$ then the crossing $A_{j_t}\cap A_{j_s}$ is to the left of the crossing $A_{j_s}\cap A_{j_{t'}}$, so we can indeed sort the crossings as desired. Part (b) is immediate since $f(j_s)=j_s$ is a dot in the juggling diagram which we ignore in the juggling braid.

For Part (c) first note that, since $i_t - n < 0 < j_s \leq f(j_s)$, the arcs $A_{f(j_s)}$ and $A_{f(i_t) - n}$ cross if and only if $j_s < f(i_t) - n < f(j_s)$. It follows that the braid $I_s$ records the crossings of $A_{f(j_s)}$ only with arcs whose right endpoint is to the left of $f(j_s)$, and thus it must be an interval braid. 
Let $\widetilde{I_s}$ denote the interval braid connecting 
$T_{n-k-s+1}$ and $T_{n-k-s}$ as above, we need to verify that $I_s = \widetilde{I}_s$. Note that the crossings in $\widetilde{I_s}$ are labeled by $x\in  T_{n-k-s+1}$ such that $j_s<x<f(j_s)$. If $x = f(i_t) - n$ for some $t$, then we have $i_t - n \leq 0 < j_s < f(i_t - n) < f(j_s)$ so the arcs $A_{f(j_s)}$ and $A_{f(i_t) -n}$ intersect once. Now assume that $x$ is \emph{not} of the form $f(i_t) - n$, so it must be of the form $f(j_t)$ for some $t$. Note that necessarily $t < s$, for otherwise $f(j_t) \not\in T_{n-k-s}$. So we have $j_t < j_s < f(j_t) < f(j_s)$ and the arcs $A_{f(j_s)}$ and $A_{f(j_t)}$ intersect once. It follows that both $I_s$ and $\widetilde{I}_s$ involve crossings between the same strands, and since both are interval braids the result follows.
\end{proof}

\begin{example}\label{ex: j1 intervals}
Let us take the bounded affine permutation of Example \ref{ex: running 1}. We have $f = [4,6,7,9,11,8]$ and, as we have seen, the juggling diagram of $f$ is:

\begin{center}
		\begin{tikzpicture}[scale=0.7]

             \draw (-3.1, -0.2) node {\scriptsize{-3}};
             \draw (-2.1, -0.2) node {\scriptsize{-2}};
             \draw (-1.1, -0.2) node {\scriptsize{-1}};
            \draw (0, -0.2) node {\scriptsize{0}};
		\draw (1,-0.2) node {\scriptsize{1}};
		\draw (2,-0.2) node {\scriptsize{2}};
		\draw (3,-0.2) node {\scriptsize{3}};
		\draw (4,-0.2) node {\scriptsize{4}};
		\draw (5,-0.2) node {\scriptsize{5}};
		\draw (6,-0.2) node {\scriptsize{6}};
		\draw (7,-0.2) node {\scriptsize{7}};
		\draw (8,-0.2) node {\scriptsize{8}};
		\draw [->] (-4,0)--(8.5,0);
            \draw[color=red] (4,0) arc (0:180:1.5);
            \draw[color=red] (1,0) arc (0:180:2);

            \draw[color=blue] (6,0) arc (0:180:2);
            \draw[color=blue] (2,0) arc (0:180:1);

            \draw[color=cyan] (3,0) arc (0:180:2.5);

            \draw[color=magenta] (5,0) arc (0:180:3);

            \node at (-1, 2.0) {$\circ$};
            \node at (0.35, 2.5) {$\circ$};
            \node at (-0.3, 1.85) {$\circ$};
            \node at (0.75, 0.95) {$\circ$};

		\end{tikzpicture}
	\end{center}

%
%
%
%
	%

The crossings marked with $\circ$ represent crossings between two special arcs, and thus they will not be represented in the braid $\Ja_k(f)$. Note that $i_1 = 3, i_2 = 4, i_3, = 5, i_4 = 6$, while $j_1 = 1$ and $j_2 = 2$. Thus, we start with the set $T_0 = \{3, 4, 5, 6\}$. Note that $T_1 = (T_0 \cup \{2\})\setminus\{f(2)\} = \{2, 3, 4, 5\}$, and we find the interval $I_2$ as follows: 

\begin{center}
    	\begin{tikzpicture}[scale = 0.6]
  \draw node at (6, 0.5) {$3$};
  \draw node at (6,1.5) {$4$};
  \draw node at (6,2.5) {$5$};
  \draw node at (6,3.5)  {$6$};
  \draw node at (6, -0.2) {$T_0$};

  \draw node at (8,0.5) {$2$};
  \draw node at (8,1.5)  {$3$};
  \draw node at (8,2.5)  {$4$};
  \draw node at (8,3.5)  {$5$};
  \draw node at (8, -0.2) {$T_1$};

  \draw[color=blue] (6, 3.5) to[out=0, in=180] (8, 0.5);
  \draw[color=magenta] (6, 2.5) to[out=0, in=180] (8, 3.5);
  \draw[color=red] (6, 1.5) to[out=0, in=180] (8, 2.5);
  \draw[color=cyan] (6, 0.5) to[out=0, in=180] (8, 1.5);

  \draw node at (7, -0.2) {\Large{$I_{2}$}};
  \end{tikzpicture}
\end{center}

\noindent Now, $T_2 = (T_1 \cup \{1\})\setminus\{f(1)\} = (T_0 \cup \{2, 1\})\setminus\{f(2), f(1)\} = \{1, 2, 3, 5\}$, and we find the interval $I_1$ that is concatenated to $I_2$ on the right:

\begin{center}
    	\begin{tikzpicture}[scale = 0.6]
  \draw node at (6, 0.5) {$3$};
  \draw node at (6,1.5) {$4$};
  \draw node at (6,2.5) {$5$};
  \draw node at (6,3.5)  {$6$};
  \draw node at (6, -0.2) {$T_0$};

  \draw node at (8,0.5) {$2$};
  \draw node at (8,1.5)  {$3$};
  \draw node at (8,2.5)  {$4$};
  \draw node at (8,3.5)  {$5$};
  \draw node at (8, -0.2) {$T_1$};

  \draw node at (10,0.5) {$1$};
  \draw node at (10,1.5)  {$2$};
  \draw node at (10,2.5)  {$3$};
  \draw node at (10,3.5)  {$5$};
  \draw node at (10, -0.2) {$T_2$};

  \draw[color=blue] (6, 3.5) to[out=0, in=180] (8, 0.5);
  \draw[color=magenta] (6, 2.5) to[out=0, in=180] (8, 3.5);
  \draw[color=red] (6, 1.5) to[out=0, in=180] (8, 2.5);
  \draw[color=cyan] (6, 0.5) to[out=0, in=180] (8, 1.5);

  \draw[color=magenta] (8,3.5) to[out=0, in=180] (10, 3.5);
  \draw[color=red] (8, 2.5) to[out=0, in=180] (10, 0.5);
  \draw[color=cyan] (8, 1.5) to[out=0, in=180] (10, 2.5);
  \draw[color=blue] (8, 0.5) to[out=0, in=180] (10, 1.5);

  \draw node at (7, -0.2) {\Large{$I_{2}$}};
  \draw node at (9, -0.2) {\Large{$I_1$}};
  \end{tikzpicture}
\end{center}

\noindent Comparing to Example \ref{ex: running 1}, this braid coincides with $\Ja_k(f)$.\hfill$\Box$
\end{example}

In order to find an explicit description of the interval $I_s$, we need to find the relative positions of $f(j_s)$ in $T_{n-k-s}$ and of $j_s$ in 
$T_{n-k-s+1}$. This is achieved in the following lemma. 

\begin{lemma}
\label{lem:tuple}
Assume $f(j_s)>j_s$. Then:

(a) The set 
$
\{x < f(j_s) \mid x \not \in T_{n-k-s}\}
$
has precisely $\inv(s) + s$ elements.

(b) The set $
\{x < f(j_s) \mid x \in T_{n-k-s}\}
$
has precisely $f(j_s)-\inv(s)-s-1$ elements.

(c) The set 
$
\{x < j_s \mid x \in T_{n-k-s+1}\}
$
has precisely $j_s-s=k-\lambda_s^t$ elements.
\end{lemma}

\begin{proof}
For Part (a), Lemma \ref{lem: right end} implies
$$
\{1,\ldots,n\}\setminus T_{n-k-s}=\{j_1,\ldots,j_s\}\sqcup 
\{f(j_{s+1}),\ldots,f(j_{n-k})\}.
$$
Assume that $x\notin T_{n-k-s}$ and $x<f(j_s)$.
We have the following two cases:

1) $x\in \{j_1,\ldots,j_s\}$, then $x\le j_s< f(j_s)$ and $x\notin T_{n-k-s}$. This accounts for $s$ elements. 

2) $x=f(j_t)$ for $t>s$ and $f(j_t)<f(j_s)$. This accounts for $\inv(s)$ elements. 

\noindent Part (b) follows from Part (a). For Part (c), among $j_s-1$ elements $\{1,\ldots,j_s-1\}$ there are $s-1$ elements $j_1,\ldots,j_{s-1}$ and $(j_s-s)$ elements $i_t<j_s$. None of them belongs to $\{f(j_s),\ldots,f(j_{n-k})\}.$
The former are all not in $T_{n-k-s+1}$ but the latter all are.
The last equation follows from Lemma \ref{lem: Le and f}.
\end{proof}

\begin{cor}\label{cor: intervals juggling}
The interval braid $I_{s}$ is given by
\[
I_{s} = s_{\lambda_s^{t}-1}\cdots s_{k-f(j_s) +\inv(s) + s +1}.
\]
and 
$$
J^{(1)}_k= I_{1}\cdots I_{n-k}.
$$
\end{cor}

\begin{proof}
This follows from Lemma \ref{lem: j1 from intervals} and Lemma \ref{lem:tuple}.
\end{proof}

\subsubsection{The braid $J^{(2)}(f)$} The braid $J^{(2)}(f)$ takes care of all the crossings between special arcs and it is reduced. Recall that these arcs are $A_{f(i_{1})-n}, \dots, A_{f(i_{k})-n}$, where the special arc $A_{f(i_{\ell})-n}$ joins $i_{\ell}-n$ to $f(i_{\ell} - n) = f(i_{\ell}) - n$. If $\ell < s$, then we have $i_{\ell} - n < i_{s} - n \leq 0 < f(i_{\ell} - n)$, so
the special arcs $A_{f(i_{\ell}) - n}$ and $A_{f(i_{s}) - n}$ will cross precisely when $f(i_{\ell} - n) < f(i_{s} - n)$, that is, when $f(i_{\ell}) < f(i_s)$. This implies the following result: 

\begin{prop}\label{prop: J2}
Let $f$ be a $k$-bounded affine permutation. Then the braid $J^{(2)}(f)$ is a positive reduced lift of the inverse of the permutation that sorts $(f(i_1), \dots, f(i_k))$ in decreasing order.\hfill$\Box$ 
\end{prop}

In Example \ref{ex: running 1} we have that $i_1 = 3, i_2 = 4, i_3 = 5, i_4 = 6$, while $f(i_1) = 7, f(i_2) = 9$, $f(i_3) = 11$, $f(i_4) = 8$ so that $J^{(2)}(f)$ is

\begin{center}
    	\begin{tikzpicture}[scale = 0.6]
  \draw node at (5.8, 0.5) {$7$};
  \draw node at (5.8,1.5) {$8$};
  \draw node at (5.8,2.5) {$9$};
  \draw node at (5.8,3.5)  {$11$};

  \draw node at (8.2,0.5) {$i_4$};
  \draw node at (8.2,1.5)  {$i_3$};
  \draw node at (8.2,2.5)  {$i_2$};
  \draw node at (8.2,3.5)  {$i_1$};

  \draw (6, 0.5) to[out=0, in=180] (8, 3.5);
  \draw (6, 1.5) to[out=0, in=180] (8, 0.5);
  \draw (6, 2.5) to[out=0, in=180] (8, 2.5);
  \draw (6, 3.5) to[out=0, in=180] (8, 1.5);

  \end{tikzpicture}
\end{center}

\noindent This concludes our presentation of the braid $J_k(f)$ and its properties. Let us now compare the links we obtain from $J_k(f)$ to those obtained from Richardson braids.

\subsection{Generalized destabilizations and comparison of $\La(u,w)$ and $\La(f)$}\label{ssec:destabilization} The main result of this subsection is to establish that, possibly up to unlinked unknots, the smooth links $\La(u,w)$ and $\La(f)$ are smoothly isotopic.
\begin{thm}\label{thm:Richardson_to_juggling}
Let $(u,w)$ be a positroid pair and $f = ut_{k}w^{-1}$ its associated bounded affine permutation. Then the smooth link $\La^{+}(u,w)\sse\R^3$ is smoothly isotopic to the smooth link $\La(f)\sse\R^3$, up to a possibly empty collection of unlinked unknots.
\end{thm}

Theorem \ref{thm:Richardson_to_juggling} is proven by appropriately using the following lemma iteratively.

\begin{lemma}\label{lem:Markovsmooth} Let $\beta_1\in\Br_n$ be a positive braid on $n$ strands in the generators $\sigma_1,\ldots,\sigma_{n-1}$ and $\beta_2\in\Br_{n+1}$ the lift of a permutation $w\in S_{n+1}$ on $(n+1)$ strands. Set $j:=w^{-1}(n+1)$, let $\tilde{\beta}_1 \in \Br_{n+1}$ be the braid obtained from $\beta_1$
by shifting the indices of the Artin generators up by $1$, and let $\beta'_2 \in \Br_n$ be the braid obtained from $\beta_2$ by removing the strand that ends at the bottom on the right of $\beta_2$.


\begin{itemize}
    \item[$(1)$] Consider the braid words $\eta_1 := \Delta_{n+1}\beta_2 (\sigma_{\ell}\cdot\ldots\cdot\sigma_{j+1}\sigma_j\cdot\ldots\cdot\sigma_2\sigma_1)\tilde{\beta}_1 \in \Br_{n+1}$, an $(n+1)$-stranded positive braid, and $\eta_2:=\Delta_n\beta'_2(\sigma_{\ell - 1}\cdot\ldots\cdot\sigma_{j+1})\beta_1 \in \Br_n$, an $n$-stranded positive braid.
    \\

\noindent  Then the 0-framed smooth closure of $\eta_1$ is smoothly isotopic to that of $\eta_2$.\\

\item[$(2)$] Consider the braid words $\eta_1:=\Delta_{n+1}\beta_2(\sigma_{j}\sigma_{j-1}\cdot\ldots\cdot\sigma_1)\tilde{\beta}_1 \in \Br_{n+1}$, an $(n+1)$-stranded positive braid, and $\eta_2:=\Delta_n\beta'_2\beta_1\in \Br_n$, an $n$-stranded positive braid.
\\

\noindent  Then the 0-framed smooth closure of $\eta_1$ is smoothly isotopic to the unlinked union of the 0-framed smooth closure of  $\eta_2$ and an unknot.
\end{itemize}
\hfill$\Box$
\end{lemma}

\begin{figure}
    \centering
    \includegraphics[scale=0.6]{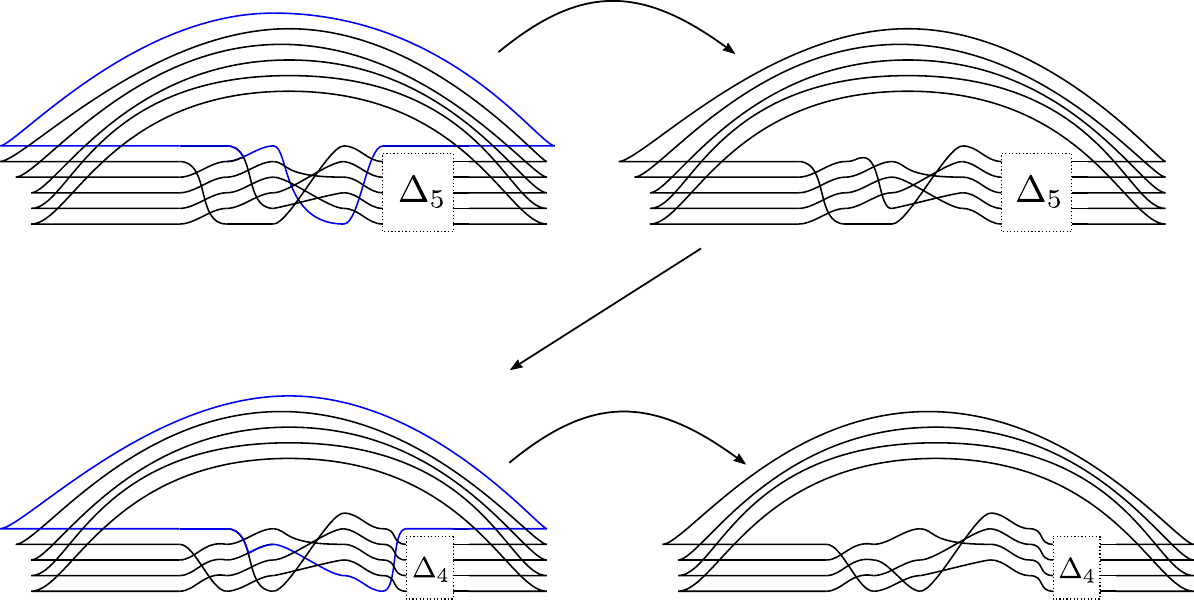}
    \caption{The sequence of moves starting with $R(u, w)$, on the upper left corner, and finishing with $J_k(f)$ on the lower right corner. We have marked in blue the occasions where we use Lemma \ref{lem:Markovsmooth}, and we have chosen to draw the link diagrams as fronts, as in Lemma \ref{lem:Markov}, for easier reference. (So all crossings in these link diagrams are over-crossings.)}
    \label{fig: running example}
\end{figure}

For now, Lemma \ref{lem:Markovsmooth} will be left unproven and we just directly use it. In the next section we will independently prove Lemma \ref{lem:Markov}, which is a stronger version that implies Lemma \ref{lem:Markovsmooth}.

\begin{example}
Before proving Theorem \ref{thm:Richardson_to_juggling}, we verify it in Example \ref{ex: running 1} as follows. Recall that $f = [4, 6, 7, 9, 11, 8]$, so that $w = [3,4,5,6,1,2]$ and $u = [1, 3, 5, 2, 4, 6]$. From Example \ref{ex: markov 44} we obtain:
    \[
    \beta(w) = (\s_2\s_3\s_4\s_5)(\s_1\s_2\s_3\s_4) \Rightarrow \beta(w^*) = (\s_4\s_3\s_2\s_1)(\s_5\s_4\s_3\s_2),
    \]
    and $u^{-1}w_0 = [1,4,2,5,3,6]w_0 = [6,3,5,2,4,1]$. Figure \ref{fig: running example} then shows the sequence of moves, using Lemma \ref{lem:Markovsmooth},  that starts with the Richardson link and ends with the Juggling link.\hfill$\Box$ 
\end{example}

Now, Subsection \ref{ssec:RichardsonBraid} defines the Richardson link $\Lambda(u,w)$ to be smooth 0-framed closure of the braid 
$\Delta_n^{-1}\beta(u^{-1}w_0)\beta(w^*)$
, where $(u,w)$ is a positroid pair. It will be more convenient to write this braid word as 
$\Delta_n^{-2}\Delta_n\beta(u^{-1}w_0)\beta(w^*)$
and focus on the piece 
$\Delta_n\beta(u^{-1}w_0)\beta(w^*)$, where all our changes will take place, see e.g. Figure \ref{fig: running example}. Let $f=f_{u,w} = ut_kw^{-1}$ be the associated bounded affine permutation, as in Subsection \ref{ssec:boundedaffinepermutation}. By the identity \eqref{eq: u from f},
$$
u=[f(i_1)-n,\ldots,f(i_k)-n,f(j_1),\ldots,f(j_{n-k})]
$$
and therefore 
$$
uw_0=[f(j_{n-k}),\ldots,f(j_1),f(i_k)-n,\ldots,f(i_1)-n].
$$
Now note that 
\begin{equation}\label{eq: u-1w_0}
u^{-1}w_0 = (w_0(uw_0)w_0)^{-1}.
\end{equation}
In particular, up to conjugation by the longest element $w_0$, the inverse of the restriction of the permutation $u^{-1}w_0$ to $\{n-k+1,\ldots,n\}$ is the permutation $J^{(2)}(f)$ which sorts $f(i_1),\ldots,f(i_k)$ in decreasing order, see Proposition \ref{prop: J2}. 

The iterative applications of Lemma \ref{lem:Markovsmooth} that will lead to a destabilization procedure from $R_n(u,w)$ to $J_k(f)$ are indexed by the columns of the corresponding Le diagram. The two basic steps we need are as follows:

\begin{itemize}
    \item[(1)] First, suppose that the Le diagram for $(u,w)$ has dots in the rightmost column, with the lowest dot in row $d$. Then we schematically draw the braid 
    as follows:
\begin{center}
\begin{tikzpicture}
\draw (0,0)--(0,3)--(2,3)--(2,0)--(0,0);
\draw (0,4)--(2,4)--(3,1);
\draw [line width=3,color=blue] (0,4)--(2,4)--(2.5,2.5)--(3,3);
\draw (2,3)--(3,4);
\draw (2,2)--(3,3);
\draw (2,1)--(3,2);
\draw (3.7,4) node {$1$};
\draw (3.7,3.5) node {$\vdots$};
\draw (3.7,3) node {\small $d$\normalsize};
\draw (3.7,2) node {$\vdots$};
\draw (3.7,1) node {\small$\lambda_{n-k}^t+1$\normalsize};
\draw[<->,dashed] (-0.2,-0.2)--(3.2,-0.2);
\draw (1.5,-0.5) node {$w^*$};
\draw [line width=3,color=blue] (4.5,3)--(6,0);
\draw (6.2,0) node {$n$};
\draw[<->,dashed] (3.9,-0.2)--(6.2,-0.2);
\draw (4.7,-0.5) node {$u^{-1}w_0$};
\draw[<->,dashed] (6.5,-0.2)--(9.2,-0.2);
\draw (7.5,-0.5) node {$\Delta_n$};
\draw [line width=3,color=blue] (6.5,0)--(9,4);
\draw (1,1.5) node {$\widetilde{w}$};

\draw  (-0.2,-0.2)--(-0.2,-0.8);
\draw (-0.2,-1) node {$x=0$};

\draw  (3.7,-0.2)--(3.7,-0.8);
\draw (3.7,-1) node {$x=1$};

\draw  (6.2,-0.2)--(6.2,-0.8);
\draw (6.2,-1) node {$x=2$};
\end{tikzpicture}
\end{center}

Write $w = (s_{n-\lambda_{n-k}^{t}}\cdots s_{n-1})w'$, so that $w^* = (s_{\lambda_{k-t}^{t}}\cdots s_{1})\widetilde{w}$, and note that, by \eqref{eq: u-1w_0}, $u^{-1}w_0$ is a permutation braid that connects $n$ on the right with $w_0u(n)= w_0(f(j_{n-k})) = w_0(n-d+1) = d$ on the left, where the equation $f(j_{n-k}) = n-d+1$ follows from Lemma \ref{lem: Le and f}. Then, the thick blue strand in the figure above can be removed by Lemma \ref{lem:Markovsmooth}, leaving a braid which can be described as follows. On the side of $w_0ww_0=w^*$, we get
$$
\left(s_{\lambda_{n-k}^{t}}\cdots s_{d}\right)\cdot\widetilde{w}= \widetilde{w}\cdot\left(s_{\lambda_{n-k}^{t}+n-k-1}\cdots s_{d+n-k-1} \right).
$$
On the side of $u^{-1}w_0$, we just remove the strand connecting $d$ to $n$ and leave the rest unchanged. On the side of $\Delta_n$, we remove the strand connecting $n$ to $1$, and get $\Delta_{n-1}$.\\


\item[(2)] Second, suppose that the last column of the Le diagram is empty. Then by Lemma \ref{lem: Le and f} $u(n) = f(j_{n-k}) = n-\lambda_{n-k}^{t}$, so by \eqref{eq: u-1w_0} we get $u^{-1}w_0(\lambda_{n-k}^{t} + 1) = n$, and thus the diagram has the following form:

\begin{center}
\begin{tikzpicture}
\draw (0,0)--(0,3)--(2,3)--(2,0)--(0,0);
\draw (0,4)--(2,4)--(3,1);
\draw [line width=3,color=blue] (0,4)--(2,4)--(3,1);
\draw (2,3)--(3,4);
\draw (2,2)--(3,3);
\draw (2,1)--(3,2);
\draw (3.7,4) node {$1$};
\draw (3.7,3.5) node {$\vdots$};
\draw (3.7,3) node {$\vdots$};
\draw (3.7,2) node {$\vdots$};
\draw (3.7,1) node {\small $\lambda_{n-k}^t+1$\normalsize};
\draw[<->,dashed] (-0.2,-0.2)--(3.2,-0.2);
\draw (1.5,-0.5) node {$w^*$};
\draw [line width=3,color=blue] (4.5,1)--(6,0);
\draw (6.2,0) node {$n$};
\draw[<->,dashed] (3.9,-0.2)--(6.2,-0.2);
\draw (4.7,-0.5) node {$u^{-1}w_0$};
\draw[<->,dashed] (6.5,-0.2)--(9.2,-0.2);
\draw (7.5,-0.5) node {$\Delta_n$};
\draw [line width=3,color=blue] (6.5,0)--(9,4);
\draw (1,1.5) node {$\widetilde{w}$};

\draw  (-0.2,-0.2)--(-0.2,-0.8);
\draw (-0.2,-1) node {$x=0$};

\draw  (3.7,-0.2)--(3.7,-0.8);
\draw (3.7,-1) node {$x=1$};

\draw  (6.2,-0.2)--(6.2,-0.8);
\draw (6.2,-1) node {$x=2$};
\end{tikzpicture}
\end{center}

\noindent In this case the thick line closes up to an unknot. By Lemma \ref{lem:Markovsmooth} this unknot is unlinked from the rest of the link diagram. After moving it away, we are left with $\widetilde{w}$ instead of $w$.
\end{itemize}
\noindent We refer to either of the two operations above as a (generalized) destabilization. Let us now prove Theorem \ref{thm:Richardson_to_juggling} by appropriately applying generalized destabilizations $(n-k)$ times, as follows.

\begin{lemma}
\label{lem: inductive destabilization}
At the $m$-th step, $1\le m\le n-k+1$, of the above destabilization procedure we get an $(n+1-m)$-strand braid  that can be decomposed into the following three parts:
\begin{itemize}
    \item[(a)] The first part $\beta^{(1)}_{m}$ is the product of interval braids
\[
\prod_{h = m}^{n-k}\left(s_{\lambda^{t}_{n-k+1-h}+h-1}\cdots s_h\right) \times
\prod_{\ell = 0}^{m-2}\left(s_{\lambda^{t}_{n-k-m-2+\ell}+n-k-1}\cdots s_{2n-k-m-\ell+3+\inv(n-k-m+2-\ell)-f(j_{n-k-m+2-\ell})}\right),
\]    
where for $m=1$ the second product is assumed to equal 1 and for $m = n-k+1$ the first product is assumed to equal 1.

\vspace{0.1cm}

\item[(b)] The second part $\beta^{(2)}_{m}$ is a permutation braid obtained as a part of the wiring diagram for $u^{-1}w_0$ connecting $1, \dots, n-m+1$ on the right with $u^{-1}(n),\ldots,u^{-1}(m)$ on the left.\\

\item[(c)] The third part is $\Delta_{n-m+1}$.   \end{itemize}
\end{lemma}

\begin{proof}
We prove this by induction in $m$. For notational convenience, we refer to the left end of $\beta^{(1)}_{m}$ as $x=0$, to the right end of $\beta^{(1)}_{m}$ which coincides the left end of $\beta^{(2)}_{m}$ as $x=1$ and to the right end of $\beta^{(2)}_{m}$ as $x=2$ (see figures above). For $m=1$ we get
\begin{equation}\label{eq: w0ww0}
\beta^{(1)}_{1}=w^*= \left(s_{\lambda^{t}_{n-k}}\cdots s_1\right)\cdots\left(s_{\lambda_{1}^{t} + n - k -1}\cdots s_{n-k}\right). 
\end{equation}
and $\beta^{(2)}_{1}=u^{-1}w_0$. 

Assume that the statement holds for all $1 \leq j \leq m$.
To prove the step of induction, we mark in blue the bottom-most strand at $x=2$. It is labeled by $n-m+1$ at $x=2$ and by $(u^{-1}w_0)^{-1}(n-m+1)  = w_0u(n-m+1) = w_0(f(j_{n-k-m+1})) = n+1-f(j_{n-k-m+1})$ at $x=1$. For brevity, denote $m' := n-k-m+1$.

By assumption, the rightmost interval braid in $\beta^{(1)}_{m}$ is
\begin{equation}\label{eq: m step}
s_{m-1+\lambda^{t}_{n-k-m+1}}\cdots s_{m}=s_{m-1+\lambda^{t}_{m'}}\cdots s_{m}.
\end{equation}

We need to verify that we can apply Lemma \ref{lem:Markovsmooth}. Let us count the number of strands \emph{above} the blue strand $n+1-f(j_{m'})$ we have removed at $x=1$. Note that such strands correspond to $1 \leq \ell < m$ such that $n+1-f(j_{\ell'}) < n+1-f(j_{m'})$ where $\ell' :=  n-k-\ell+1$, that is, they correspond to elements $n - k \geq \ell' > m'$ such that $f(j_{\ell'}) > f(j_{m'})$. Thanks to \eqref{eq: noninversions}, there are precisely $n-k-m'-\inv(m')$ such strands. We conclude that the blue strand occupies the position
\[
n + 1 - f(j_{m'}) - (n-k-m' - \inv(m')) = k+1+m'+\inv(m') - f(j_{m'})
\]
and, in order to verify we can apply Lemma \ref{lem:Markovsmooth}, we need to verify that 
\[
k+m'+\inv(m') - f(j_{m'}) \leq \lambda^{t}_{m'} = k+m'-j_{m'} \Leftrightarrow j_{m'} \leq f(j_{m'}) - \inv(m')
\]
which holds thanks to Lemma \ref{lem: inv}. So we can indeed use Lemma \ref{lem:Markovsmooth} in order to destabilize. We have cases:

\begin{itemize}
\item[-] If $j_{m'} = f(j_{m'}) - \inv(m')$, then we remove a disjoint blue unknot and erase the interval braid completely.
\item[-] If $j_{m'} = f(j_{m'}) - \inv(m') - 1$, then we also erase the interval braid completely.
\item[-] Else, we delete the part $(s_{m+k+m'+\inv(m')-f(j_{m'})}\cdots s_{m})$ intersecting the blue strands out of the interval braid \eqref{eq: m step} and are left with 
\[
s_{m-1+\lambda^{t}_{m'}}\cdots s_{m+k+m'+\inv(m')-f(j_{m'})+1} = s_{m-1+\lambda^{t}_{m'}}\cdots s_{n+\inv(m')-f(j_{m'}) + 2}.
\]
Now we push this interval through the $m' - 1$ remaining interval braids and get
\begin{equation}\label{eq: push before substract}
s_{m-1+\lambda^{t}_{m'}+m'-1}\cdots s_{n+\inv(m')-f(j_{m'}) + 2+m'-1}=
s_{n-k-1+\lambda^{t}_{m'}}\cdots s_{n+\inv(m')-f(j_{m'}) + m'+1}.
\end{equation}
This agrees with the interval braid in $\beta^{(1)}_{m+1}$.
\end{itemize}
\end{proof}

\begin{proof}[Proof of Theorem \ref{thm:Richardson_to_juggling}]

We apply $n-k$ destabilization steps in Lemma \ref{lem: inductive destabilization} and obtain a $k$-stranded braid at $m=n-k+1$, which is described by the following three pieces. First,
\[
\beta_{m}^{(1)}=\left(s_{\lambda_1^{t}+n-k-1}\cdots s_{2n-f(j_1) +\inv(1) - n+2}\right)\cdots \left(s_{\lambda_{n-k}^{t}+n-k-1}\cdots s_{2n-k-f(j_{n-k}) +\inv(n-k)+1}\right).
\]
Since we did not relabel the strands yet, we need to shift all the subindices of the Coxeter generators down by $n-k$ and obtain
\[
\left(s_{\lambda_1^{t}-1}\cdots s_{k-f(j_1) +\inv(1) +1+1}\right)\cdots \left(s_{\lambda_{n-k}^{t}-1}\cdots s_{k-f(j_{n-k}) +\inv(n-k)+n-k +1}\right).
\]
 The second part $\beta_{m}^{(2)}$ is a permutation braid obtained as a part of the wiring diagram for $u^{-1}w_0$ connecting $1, \dots, k$ on the right with $2n - f(i_1) + 1, \dots, 2n - f(i_k) +1$ on the left. Finally, the third part is $\Delta_k$. By Corollary \ref{cor: intervals juggling} and Proposition \ref{prop: J2}, this implies that we have obtained $J_k(f)$.
\end{proof}

\subsection{Le Braid}\label{ssec:LeBraid} In this section, we define the Le braid $D_k(\Le)$ associated to a Le diagram $\Le$ and compare it to $J_k(f)$, where $f$ is the bounded affine permutation associated to $\Le$. See Theorem \ref{thm:Lebraid_is_jugglingbraid} below. The Le braid will be a concatenation of two braids in $k$-strands: $D_{k}(\Le) = D_{k}^{(2)}(\Le)D_k^{(1)}(\Le)$, in such a way that $D_{k}^{(1)}(\Le)$ is positive and (typically) not reduced, while $D_{k}^{(2)}(\Le)$ is negative and reduced. 

Let us start with the braid $D_{k}^{(2)}(\Le)$, since it is easier to define. In order to do this, recall the wiring diagram of $\Le$: there are exactly $k$ wires starting on the left border of the Young diagram $\lambda$. We obtain a $k$-stranded braid by:
\begin{itemize}
    \item Reading these wires in the northeast to southwest direction. Note that this direction is opposite to that in Section~\ref{ssec:Comb}.
    \item Delcaring all crossings to be \emph{negative}. 
\end{itemize}

The obtained braid is by definition the braid $D^{(2)}(\Le)$. Clearly, it is reduced. Note that $D^{(2)}(\Le)$ is the inverse of the braid obtained by joining $i_s$ to $f(i_s)$, $s = 1, \dots, k$. See Figure \ref{fig: D2} for an example. 

\begin{figure}[h!]
\begin{tikzpicture}[scale=0.6]
\draw (0,0)--(9,0)--(9,4)--(6,4)--(6,6)--(3,6)--(3,8)--(0,8)--(0,0);

\draw(0,1)--(9,1);
\draw(0,2)--(9,2);
\draw(0,3)--(9,3);
\draw(0,4)--(6,4);
\draw(0,5)--(6,5);
\draw(0,6)--(3,6);
\draw(0,7)--(3,7);

\draw(1,0)--(1,8);
\draw(2,0)--(2,8);
\draw(3,0)--(3,6);
\draw(4,0)--(4,6);
\draw(5,0)--(5,6);
\draw(6,0)--(6,4);
\draw(7,0)--(7,4);
\draw(8,0)--(8,4);

\draw(0.5,7.5) node {$\bullet$};
\draw(1.5,7.5) node {$\bullet$};
\draw(2.5,7.5) node {$\bullet$};

\draw(1.5, 6.5) node{$\bullet$};
\draw(2.5, 6.5) node{$\bullet$};

\draw(3.5, 5.5) node{$\bullet$};
\draw(5.5, 5.5) node{$\bullet$};

\draw(1.5, 4.5) node{$\bullet$};
\draw(4.5, 4.5) node{$\bullet$};
\draw(5.5, 4.5) node{$\bullet$};

\draw(4.5, 3.5) node{$\bullet$};
\draw(6.5, 3.5) node{$\bullet$};
\draw(7.5, 3.5) node{$\bullet$};
\draw(8.5, 3.5) node{$\bullet$};

\draw(6.5, 2.5) node{$\bullet$};
\draw(7.5, 2.5) node{$\bullet$};
\draw(8.5, 2.5) node{$\bullet$};

\draw(7.5, 1.5) node{$\bullet$};

\draw(4.5, 0.5) node{$\bullet$};

\draw[<-, color=blue] (0,0.5) to (4, 0.5) to[out=0, in=270] (4.5, 1) to (4.5, 3) to[out=90, in=180] (5, 3.5) to (6, 3.5) to[out=0, in=270] (6.5, 4);

\draw[<-, color=blue](0,1.5) to (7, 1.5) to[out=0, in=270] (7.5, 2) to[out=90, in=180] (8, 2.5) to[out=0, in=270] (8.5, 3) to [out=90, in=180] (9, 3.5); 

\draw[<-, color=blue](0, 2.5) to (6, 2.5) to[out=0, in=270] (6.5, 3) to[out=90, in=180] (7, 3.5) to[out=0, in=270] (7.5, 4);

\draw[<-, color=blue] (0, 3.5) to (4, 3.5) to[out=0, in=270] (4.5, 4) to[out=90, in=180] (5, 4.5) to[out=0, in=260] (5.5, 5) to[out=90, in=180] (6, 5.5);

\draw[<-,color=blue] (0, 4.5) to (1, 4.5) to[out=0, in=270] (1.5, 5) to (1.5, 6) to[out=90, in=180] (2, 6.5) to[out=0, in=270] (2.5, 7) to[out=90, in=180] (3, 7.5);

\draw[<-,color=blue] (0, 5.5) to (3, 5.5) to [out=0, in=270] (3.5, 6);

\draw[<-,color=blue] (0, 6.5) to (1, 6.5) to[out=0, in=270] (1.5, 7) to [out=90, in=180] (2, 7.5) to [out=0, in=270] (2.5, 8);

\draw[<-,color=blue] (0, 7.5) to[out=0, in=270] (0.5, 8);
\end{tikzpicture}
\caption{The braid $D^{(2)}_{k}(\Le)$. All of the crossings are negative.}
\label{fig: D2}
\end{figure}


\begin{lemma}\label{lem: D2 vs J2}
Let $\Le$ be a Le diagram and $f$ its associated bounded affine permutation. Then, the $k$-stranded braids $D^{(2)}_{k}(\Le)$ and $\Delta_{k}^{-1}\Jb(f)$ are braid equivalent. 
\end{lemma}
\begin{proof}
   The statement is equivalent to showing that the positive braids $\Delta_{k}(f)$ and $\Jb(f)(D^{(2)}(\Le))^{-1}$ are braid equivalent. This is equivalent to showing that in the positive braid $\Jb(f)(D^{(2)}(\Le))^{-1}$ each pair of strings cross exactly once. For this, we picture this braid as follows:

   \begin{center}
    	\begin{tikzpicture}[scale = 0.6]
  \draw node at (5.8, 0.5) {$i_1$};
  \draw node at (5.8,1.5) {$i_2$};
  \draw node at (5.8,2.5) {$\vdots$};
  \draw node at (5.8,3.5)  {$i_k$};
  \draw (6.5, 0) -- (6.5, 4) -- (9.2, 4) -- (9.2, 0) -- cycle;

  \draw node at (10.2,0.5) {$f(i_{a_1})$};
  \draw node at (10.2,1.5)  {$f(i_{a_2})$};
  \draw node at (10.2,2.5)  {$\vdots$};
  \draw node at (10.2,3.5)  {$f(i_{a_k})$};

  \draw (6, 0.5) -- (6.5, 0.5);
  \draw (6, 1.5) -- (6.5, 1.5);
  \draw (6, 3.5) -- (6.5, 3.5);

    \draw (9.2, 0.5) -- (9.5, 0.5);
  \draw (9.2, 1.5) -- (9.5, 1.5);
  \draw (9.2, 3.5) -- (9.5, 3.5);

  \draw node at (14.5, 0.5) {$i_k$};
  \draw node at (14.8,1.5) {$i_{k-1}$};
  \draw node at (14.5,2.5) {$\vdots$};
  \draw node at (14.5,3.5)  {$i_1$};

    \draw (10.9, 0.5) -- (11.2, 0.5);
  \draw (10.9, 1.5) -- (11.2, 1.5);
  \draw (10.9, 3.5) -- (11.2, 3.5);

    \draw (14, 0.5) -- (14.3, 0.5);
  \draw (14, 1.5) -- (14.3, 1.5);
  \draw (14, 3.5) -- (14.3, 3.5);

  \draw (11.2, 0) -- (11.2, 4) -- (14, 4) -- (14, 0) -- cycle;

  \draw node at (12.6, 2) {$\Jb(f)$};

  \draw node at (7.85,2) {$D^{(2)}(\Le)^{-1}$};


  \end{tikzpicture}
\end{center}

Here $f(i_{a_k}), \dots, f(i_{a_1})$ in the middle column are ordered in decreasing order when reading top to bottom, the braid $D^{(2)}(\Le)^{-1}$ joins $i_{s}$ to $f(i_{s})$, and the braid $\Jb(f)$ joins $f(i_{s})$ to $i_s$.

Let $1 \leq s < t \leq k$. If $f(i_s) < f(i_t)$, the strands whose left labels are $i_s$ and $i_t$ will not meet in the $D^{(2)}(\Le)^{-1}$ part of the braid, but they will meet exactly once in the $\Jb(f)$ part of it. If, on the contrary, $f(i_t) < f(i_s)$, then the same strands will meet exactly once in the $D^{(2)}(\Le)^{-1}$ part of the braid, and they will not meet in the $\Jb(f)$ part of it. The result follows. 
\end{proof}

Let us now define $D^{(1)}(\Le)$. We need an auxiliary diagram, close in spirit to the wiring diagram, that essentially keeps track of all the crossings between non-special circumferences in the juggling diagram of the associated bounded affine permutation. 

\begin{definition}\label{def:bounded wiring}
The \emph{bounded wiring diagram} of a Le diagram $\Le$ is defined as follows. For each non-empty column of $\Le$, draw a wire from the top of the column to the lowest dot on it, always passing through the left of every dot of the column. Right after the lowest dot on the column take a right U-turn (going under that lowest dot), and then proceed with the usual wiring rules. This wire connects the top of the starting column with either the rightmost edge of a row, or the top of a different column. If the latter situation occurs, then further connect the wire with the wire starting at the top of this second column.\hfill$\Box$
\end{definition}

\noindent See Figure \ref{fig:bounded_wiring_diagram} for an instance of such a bounded wiring diagram from Definition \ref{def:bounded wiring}.\\

\begin{figure}[h!]
\begin{tikzpicture}[scale=0.6]
\draw (0,0)--(9,0)--(9,4)--(6,4)--(6,6)--(3,6)--(3,8)--(0,8)--(0,0);

\draw(0,1)--(9,1);
\draw(0,2)--(9,2);
\draw(0,3)--(9,3);
\draw(0,4)--(6,4);
\draw(0,5)--(6,5);
\draw(0,6)--(3,6);
\draw(0,7)--(3,7);

\draw(1,0)--(1,8);
\draw(2,0)--(2,8);
\draw(3,0)--(3,6);
\draw(4,0)--(4,6);
\draw(5,0)--(5,6);
\draw(6,0)--(6,4);
\draw(7,0)--(7,4);
\draw(8,0)--(8,4);

\draw(0.5,7.5) node {$\bullet$};
\draw(1.5,7.5) node {$\bullet$};
\draw(2.5,7.5) node {$\bullet$};

\draw(1.5, 6.5) node{$\bullet$};
\draw(2.5, 6.5) node{$\bullet$};

\draw(3.5, 5.5) node{$\bullet$};
\draw(5.5, 5.5) node{$\bullet$};

\draw(1.5, 4.5) node{$\bullet$};
\draw(4.5, 4.5) node{$\bullet$};
\draw(5.5, 4.5) node{$\bullet$};

\draw(4.5, 3.5) node{$\bullet$};
\draw(6.5, 3.5) node{$\bullet$};
\draw(7.5, 3.5) node{$\bullet$};
\draw(8.5, 3.5) node{$\bullet$};

\draw(6.5, 2.5) node{$\bullet$};
\draw(7.5, 2.5) node{$\bullet$};
\draw(8.5, 2.5) node{$\bullet$};

\draw(7.5, 1.5) node{$\bullet$};

\draw(4.5, 0.5) node{$\bullet$};

\draw[red] (0.25, 8) to (0.25, 7.25) to[out=325, in=225] (0.75, 7.25) to[out=90, in=250] (0.75, 7.5) to[out=20, in=205](1.15, 7.5) to[out=25, in=205] (1.25, 8);
\draw[red] (1.25,8) to (1.25,4.25) to[out=325, in=225] (1.75,4.25) to [out=90, in=250] (1.75, 4.5) to[out=10, in=190](4.15, 4.5) to[out=80, in=260](4.25,6);
\draw[red] (4.25, 6) to[out=275, in=90] (4.25, 0.25) to[out=325, in=225] (4.75, 0.25) to[out=90,in=250](4.75, 0.5) to[out=5,in=180](9,0.5);

\draw[blue] (2.25, 8) to (2.25, 6.25) to[out=325, in=225] (2.75, 6.25) to[out=90, in=250] (2.75, 6.5) to (3,6.5); 

\draw[cyan] (3.25, 6) to (3.25, 5.25) to[out=325, in=225] (3.75, 5.25) to[out=90, in=250](3.75, 5.5) to[out=10, in=175] (5.15, 5.5) to[out=25, in=205] (5.25, 6);
\draw[cyan] (5.25, 6) to (5.25, 4.25) to[out=325, in=225] (5.75, 4.25) to[out=90, in=250] (5.75, 4.5) to [out=5, in=180] (6, 4.5);

\draw[purple] (6.25, 4) to (6.25, 2.25) to[out=325, in=225] (6.75, 2.25) to[out=90, in=250] (6.75, 2.5) to[out=5, in=185] (7.15, 2.5) to[out=25, in=260] (7.5, 3.25) to[out=70,in=190] (7.75, 3.5) to[out=5, in=175](8.15, 3.5) to[out=25, in=205] (8.25, 4);
\draw[purple] (8.25, 4) to (8.25, 2.25) to[out=325, in=225] (8.75, 2.25) to[out=90, in=250] (8.75, 2.5) to (9,2.5);

\draw[teal] (7.25, 4) to (7.25, 1.25) to[out=325, in=225] (7.75, 1.25) to[out=90, in=250] (7.75, 1.5) to [out=10, in=180] (9, 1.5);

\draw(0.5, 8.3) node {$1$};
\draw(2.5, 8.3) node {$2$};
\draw(3.2, 7.5) node{$3$};
\draw(3.5, 6.3) node{$4$};
\draw(6.2, 5.5) node{$5$};
\draw(6.5, 4.3) node{$6$};
\draw(7.5, 4.3) node{$7$};
\draw(9.2, 3.5) node{$8$};
\end{tikzpicture}
\caption{The labeled diagram $\widetilde{\wire{\Le}}$.}\label{fig:bounded_wiring_diagram}
\end{figure}

Definition \ref{def:bounded wiring} creates a (typically) \emph{non-reduced} braid with $k'$ strands, $k' \leq k$. Indeed, note that we have an injective map from the set of wires to the rows of $\Le$. In fact, it follows from identities \eqref{eq: w from f} and \eqref{eq: u from f} that these wires correspond to chains of non-special circumferences in the juggling braid of the associated bounded affine permutation $f$. Moreover, the crossings in the bounded wiring diagram correspond precisely to the crossings between these non-special circumferences. 

Let us enhance the bounded wiring diagram by enumerating both the right ends of the rows which do not have a wire ending on them and the top ends of those columns where a wire starts, as follows. Reading the steps of the diagram from northwest to southeast (both vertical \emph{and} horizontal steps), we enumerate the following two types of steps in the order in which these steps are found:
\begin{itemize}
    \item[-] A horizontal step at the top of a column which is the initial step of a wire. (In particular, there is no wire starting in a column to its left that goes up to the top of that column.)\\
    
    \item[-] A vertical step at the right of a row which is not the endpoint of a wire. 
\end{itemize}

\noindent In this manner, we have labeled exactly $k$ steps. Let us denote by $\wire{\Le}$ the bounded wiring diagram of $\Le$ and by $\widetilde{\wire{\Le}}$ its labeled version.  We are ready to define the braid $D^{(1)}(\Le)$.

\begin{definition}
The $k$-stranded braid $D_{k}^{(1)}(\Le)$ of a $\Le$ diagram with at most $k$ rows is defined inductively on the columns of $\Le$ as follows.
\begin{enumerate}
\item If $\Le$ is the empty Young diagram, $D_{k}^{(1)}(\Le)$ is the trivial braid on $k$ strands.\\
\item Assume $D_{k}^{(1)}(\Le)$ has been defined and let $\Le'$ be a Le diagram obtained from $\Le$ by attaching a column of height $h$ to the left of $\Le$. Then:

\begin{itemize}
    \item[(i)] If this column has no dots, then we define $D^{(1)}_{k}(\Le') := D^{(1)}_{k}(\Le)$.\\

    \item[(ii)] Else, we first draw the bounded wiring diagram $\wire{\Le'}$ of $\Le'$. The wire associated to the first column of $\Le'$, i.e.~to the column that does not belong to $\Le$, ends in a labeled step of $\widetilde{\wire{\Le}}$; say with label $t$. 
    Then we define
\[
D^{(1)}_{k}(\Le') := (\sigma_{h-1}\cdots \sigma_{k-t+1})\cdot D^{(1)}_{k}(\Le). 
\]
\hfill$\Box$

See Figure \ref{fig: adding columns} for an example. 
\end{itemize}
\end{enumerate}
\end{definition}

\begin{figure}[h!]
\begin{tikzpicture}[scale=0.6]

\draw(0,4)--(0,8);

\draw node at (0.2, 7.5) {$1$};
\draw node at (0.2, 6.5) {$2$};
\draw node at (0.2, 5.5) {$3$};
\draw node at (0.2, 4.5) {$4$};

\draw (1,4.5)--(2,4.5);
\draw (1,5.5)--(2,5.5);
\draw (1,6.5)--(2,6.5);
\draw (1,7.5)--(2,7.5);

\draw node at (1.5, 3) {$D_k^{(1)}(\Le)$};

\draw node at (4, 6) {\huge $\rightsquigarrow$};

\draw(7,4)--(7,8);
\draw[dashed] (6,4)--(7,4)--(7,8)--(6,8)--cycle;
\draw[dashed] (6,5)--(7,5); \draw[dashed](6,6)--(7,6); \draw[dashed] (6,7)--(7,7);

\draw node at (6.5, 7.5) {$\bullet$};
\draw node at (6.5, 6.5) {$\bullet$};
\draw node at (6.5, 4.5) {$\bullet$};
\draw node at (7.2, 7.5) {$1$};
\draw node at (7.2, 6.5) {$2$};
\draw node at (7.2, 5.5) {$3$};
\draw node at (7.2, 4.5) {$4$};

\draw[blue] (6.25, 8) to (6.25, 4.3) to[out=270, in=270)] (6.75, 4.3) to[out=90, in=180] (7, 4.5);

\draw (10,4.5) to[out=0, in=180] (11,5.5);
\draw (10, 5.5) to[out=0, in=180] (11, 6.5);
\draw (10,6.5) to[out=0, in=180] (11,7.5);
\draw (10,7.5) to[out=0, in=180] (11,4.5);
\draw (10, 4.5) -- (9, 4.5);
\draw (10, 5.5) -- (9, 5.5);
\draw (10,6.5)--(9,6.5);
\draw (10,7.5)--(9,7.5);
\draw node at (10, 3) {$D_k^{(1)}(\Le')$};

\draw[dashed] (12,2)--(12,8);

\draw(13,4)--(13,8);
\draw (13,4)--(14,4)--(14,8)--(13,8)--cycle;
\draw (13,5)--(14,5); \draw(13,6)--(14,6); \draw (13,7)--(14,7);

\draw node at (13.5, 7.5) {$\bullet$};
\draw node at (13.5, 6.5) {$\bullet$};
\draw node at (13.5, 4.5) {$\bullet$};
\draw node at (13.5, 8.3) {$1$};
\draw node at (14.2, 7.5) {$2$};
\draw node at (14.2, 6.5) {$3$};
\draw node at (14.2, 5.5) {$4$};

\draw[blue] (13.25, 8) to (13.25, 4.3) to[out=270, in=270)] (13.75, 4.3) to[out=90, in=180] (14, 4.5);

\draw node at (17.5, 6) {\huge $\rightsquigarrow$};

\draw (15,4.5) to[out=0,in=180] (16, 5.5);
\draw (15,5.5) to[out=0,in=180] (16, 6.5);
\draw (15,6.5) to[out=0,in=180] (16, 7.5);
\draw (15,7.5) to[out=0,in=180] (16, 4.5);
\draw node at (15.5, 3) {$D_k^{(1)}(\Le)$};

\draw(20,4)--(20,8);
\draw (20,4)--(21,4)--(21,8)--(20,8)--cycle;
\draw (20,5)--(21,5); \draw(20,6)--(21,6); \draw (20,7)--(21,7);

\draw node at (20.5, 7.5) {$\bullet$};
\draw node at (20.5, 6.5) {$\bullet$};
\draw node at (20.5, 4.5) {$\bullet$};
\draw node at (20.5, 8.2) {$1$};
\draw node at (21.2, 7.5) {$2$};
\draw node at (21.2, 6.5) {$3$};
\draw node at (21.2, 5.5) {$4$};

\draw[blue] (20.25, 8) to (20.25, 4.3) to[out=270, in=270)] (20.75, 4.3) to[out=90, in=180] (21, 4.5);

\draw[dashed] (19,4)--(19,8); \draw[dashed] (19,4)--(20,4); \draw[dashed] (19,5)--(20,5); \draw[dashed] (19,6)--(20,6); \draw[dashed] (19,7)--(20,7); \draw[dashed] (19,8)--(20,8);

\draw node at (19.5, 7.5) {$\bullet$};
\draw node at (19.5, 4.5) {$\bullet$};

\draw[red] (19.25, 8) to (19.25, 4.3) to[out=270, in=270)] (19.75, 4.3) to[out=90, in=180] (20, 4.5) to[out=0, in=270] (20.5, 5) to (20.5, 6) to[out=90, in=180] (21, 6.5);

\draw (23,4.5) to[out=0,in=180] (24,5.5);
\draw (23,5.5) to[out=0,in=180] (24,6.5);
\draw (23,6.5) to[out=0,in=180] (24,7.5);
\draw (23,7.5) to[out=0,in=180] (24,4.5);
\draw (24, 7.5)--(25, 7.5);
\draw (24,6.5) to[out=0,in=180] (25,4.5);
\draw (24,5.5) to[out=0,in=180] (25,6.5);
\draw (24,4.5) to[out=0,in=180] (25,5.5);
\draw node at (24, 3) {$D_k^{(1)}(\Le')$};
\end{tikzpicture}
\caption{Adding a column to the left of a Le diagram, and its effect on the braid $D_k^{(1)}(\Le)$. Here $k = 4$. On the left-hand side of the figure, we add a column to a Le diagram associated to the empty partition. Since we are adding a column of height $h=4$ and the endpoint of the new strand is $t = 4$, we have to multiply by the interval braid $\sigma_{h-1} \dots \sigma_{k-t+1} = \sigma_3\sigma_2\sigma_1$. On the right-hand side of the figure, we add a column to a nonempty Le diagram. On this side, $h = 4$ and $t = 3$, so we have $D_k^{(1)}(\Le') = (\sigma_3\sigma_2)D_k^{(1)}(\Le)$. }
\label{fig: adding columns}
\end{figure}

\noindent The comparison to $J_k(f)$ needs the analogue of Lemma \ref{lem: D2 vs J2}, which reads as follows:

\begin{lemma}\label{lem: D1 vs J1}
Let $\Le$ be a Le diagram and $f$ its associated bounded affine permutation. Then the $k$-stranded braids $D^{(1)}_{k}(\Le)$ and $\Ja(f)$ are braid equivalent. 
\end{lemma}
\begin{proof}
 Let us work by induction on the number of columns of $\Le$. If $\Le$ has no columns, i.e. if it is associated to the empty Young diagram, then $D^{(1)}_{k}(\Le)$ is the trivial braid. In that case, all the arcs in the juggling diagram for $f$ are special, which implies that $\Ja(f)$ is the trivial braid as well. Let us now assume the result for the Le diagram $\Le$, with associated bounded affine permutation $f$, and let $\Le'$ be a Le diagram (with permutation $f'$) obtained from $\Le$ by adjoining an extra column to the left. We let $n$ be the width of the Le diagram $\Le$, so that $n+1$ is the width of $\Le'$.
 
 \noindent If this added column is empty then $D^{(1)}_{k}(\Le) = D^{(1)}_{k}(\Le')$  by definition. Since $f'(1) = 1$ and $f'(i) = f(i-1) + 1$ for $i > 1$, the juggling braids for $f$ and $f'$ coincide, and thus $\Ja(f) = \Ja(f')$.
 
\noindent If this added column is non-empty, let us study first how the bounded affine permutation $f'$ is obtained from the bounded affine permutation $f$. For this, let $h$ be the height of the added column. Note that, if $i < k-h+1$ then $f'(i) = n+1+i = f(i) + 1$. On the other hand, if $i > k-h+1$ then $f'(i) = f(i-1)+1$, this follows from \eqref{eq: u from f}, see e.g. Figure \ref{fig: wiring diagram}. Finally, the juggling diagram of $f'$ is then obtained from that of $f$ by inserting a new non-special arc, which is precisely the arc joining $f'(k-h+1)$ to $k-h+1$. Recall now that $t$ is the label in $\widetilde{\wire{\Le}}$ of the strand starting in this new column of $\Le'$. Then, $k-t+1$ is the position of the strand in $J^{(1)}(f)$ that we join to this new arc. The end position of this strand is $k-(k-h+1) + 1 = h$. Thus, $J^{(1)}(f') = \sigma_{h-1}\cdots \sigma_{k-t+1}J^{(1)}(f)$ and the result follows.
\end{proof}

\noindent Lemmas \ref{lem: D2 vs J2} and \ref{lem: D1 vs J1} thus imply the following result:

\begin{thm}\label{thm:Lebraid_is_jugglingbraid}
Let $\Le$ be a Le diagram with corresponding bounded affine permutation $f$. Then, the braids $\Delta_k^{2}D_{k}(\Le)$ and $\Delta_kJ_{k}(f)$ are braid equivalent. 
\end{thm}
\begin{proof}
By Lemma \ref{lem: D1 vs J1}, $\Delta_{k}^{2}D_{k}(\Le)=
\Delta_{k}^{2}D_k^{(2)}(\Le)D_{k}^{(1)}(\Le)
=\Delta_{k}^{2}D_{k}^{(2)}(\Le)J_{k}^{(1)}(f)$. By Lemma \ref{lem: D2 vs J2}, this also equals $\Delta_{k}^{2}D_{k}^{(2)}(\Le)J_{k}^{(1)}(f) = \Delta_k^{2}\Delta_k^{-1}J^{(2)}_{k}(f)J_{k}^{(1)}(f) = \Delta_{k}J_k(f)$.
\end{proof}

\noindent This concludes our discussion of braids directly associated to Le-diagrams. A Legendrian link associated to $D_k(\Le)$ will be discussed in Section \ref{sec:Legendrian}.

    

\subsection{Matrix Braids}\label{ssec:MatrixBraids} 
Let us finally introduce cyclic rank matrices $r=(r_{ij})$, their associated matrix braids $M_k(r)\in\Br_k$, and conclude Theorem \ref{thm:main1}.(ii).


\begin{definition}\label{def:rankmatrix}
A cyclic rank matrix of type $(k,n)$ is an array $r=(r_{ij})$ indexed by $(i,j)\in\Z^2$ satisfying the following conditions:
\begin{itemize}
	\item[(i)] $r_{ij}= 0$ if $j<i$ and $r_{ij}=k$ if $i+n-1\leq j$.
	\item[(ii)] $r_{ij}-r_{(i+1)j}\in\{0,1\}$, $r_{ij}-r_{i(j-1)}\in\{0,1\}$, and $r_{ij}=r_{(i+n)(j+n)}$, for all $i,j\in\Z$.
	\item[(iii)] If $r_{(i+1)(j-1)}=r_{(i+1)j}=r_{i(j-1)}$ then $r_{ij}=r_{(i+1)(j-1)}$.
\end{itemize}
Note that we can restrict to the grid $j\in[1,n]$ 
due to the condition $r_{ij}=r_{(i+n)(j+n)}$, for all $i,j\in\Z$.\hfill$\Box$
\end{definition}

\noindent Given a cyclic rank matrix $r=(r_{ij})$, and each $i\in\Z$, there is a unique index $f(i)$ such that
$$r_{i\mbox{ }f(i)}=r_{(i+1)\mbox{ }f(i)}=r_{i \mbox{ }(f(i)-1)}=r_{(i+1)\mbox{ }(f(i)-1)}+1.$$
Then the map $f:\Z\lr\Z$ defined by $f(i)=j$ if and only if
$$r_{i\mbox{}j}=r_{(i+1)\mbox{}j}=r_{i \mbox{}(j-1)}=r_{(i+1)\mbox{}(j-1)}+1,$$
defines a bounded affine permutation. In fact, this establishes a bijection between cyclic rank matrices and bounded affine permutations, as explained in \cite[Section 3.3]{KLS}.

\begin{remark}\label{rmk: up turn}
Note that $r_{i\mbox{}(j+1)} = r_{i \mbox{}j} + 1 = r_{(i+1) \mbox{} j} + 1 = r_{(i+1) \mbox{}(j+1)} + 1$ can only happen if $i = j+1$ and $r_{i \mbox{} j} = r_{(i+1) \mbox{} j} = r_{(i+1) \mbox{}(j+1)} = 0$, see e.g. \cite[Corollary 3.12]{KLS}. 
\end{remark}

\begin{center}
	\begin{figure}[h!]
		\centering
		\includegraphics[scale=0.55]{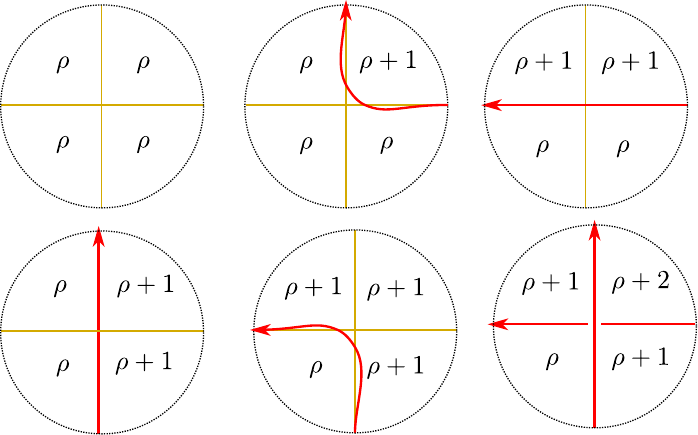}
		\caption{The local models for the matrix braid $M_k(r)$ associated to a cyclic rank matrix $r=(r_{ij})$, drawn near each four entries of the matrix. The value of a given entry $r_{ij}$ is denoted by $r_{ij}=\rho$ and the braid is depicted in red strands. The yellow lines are used to separate the matrix entries of $r$. Note that, by Remark \ref{rmk: up turn}, the middle model of the top row can only happen if $\rho = 0$.}
		\label{fig:MatrixBraid}
	\end{figure}
\end{center}

\begin{definition}\label{def:braidmatrix}
Let $r=(r_{ij})$ be a cyclic rank matrix of type $(k,n)$, $(i,j)\in\Z^2$. By definition, the infinite matrix braid $M^\infty_k(r)$ 
 is given by the tangle diagram obtained by drawing in $\R^2$ the six local tangles according to Figure \ref{fig:MatrixBraid}. By definition, the matrix braid $M_k(r)\in\cB_k$ is obtained from the infinite matrix braid $M^\infty_k(r)\in\cB_k$ by restricting its diagram to the grid $j\in[1,n]$. We define the matrix link $\La(r)\sse\R^3$ to be the smooth 0-framed closure of $M_k(r)$.\hfill$\Box$
\end{definition}

Thanks to Remark \ref{rmk: up turn}, the following gives a procedure for drawing the infinite matrix braid $M_{k}^{\infty}(r)$ for a cyclic rank matrix associated to a bounded affine permutation $f$. 

\begin{itemize}
    \item[-] For each $i$ such that $i \neq f(i)$, connect $(i, f(i))$ to $(i, i)$ using a horizontal line.
    \item[-] For each $i$ such that $i \neq f(i)$, connect $(i,i)$ to $(f^{-1}(i), i)$ using a vertical line. 
\end{itemize}

See Figure \ref{fig: rank matrix} for an example of a matrix braid $M_k(r)$. We remark that we are using matrix notation, so $(i,j)$ is the $ij$-th entry of a matrix: the coordinate $i$ increases down, and the coordinate $j$ increases to the right. 

\begin{remark}
In Definition \ref{def:braidmatrix}, we orient the strands so that they point northwest.\hfill$\Box$
\end{remark}

\begin{figure}[h]
		\begin{tikzpicture}[scale=0.6]
		\filldraw[lightgray] (4,6)--(4,7)--(5,7)--(5,6);
		\filldraw[lightgray] (6,5)--(6,6)--(7,6)--(7,5);
		\filldraw[lightgray] (8,4)--(8,5)--(9,5)--(9,4);
		\filldraw[lightgray] (5,3)--(5,4)--(6,4)--(6,3);
		\filldraw[lightgray] (7,2)--(7,3)--(8,3)--(8,2);
		\filldraw[lightgray] (9,1)--(9,2)--(10,2)--(10,1);
		\filldraw[lightgray] (10,0)--(10,1)--(11,1)--(11,0);

  \filldraw[lightgray] (12,0)--(12,-1)--(13,-1)--(13,0);

    \filldraw[lightgray] (14,-1)--(14,-2)--(15,-2)--(15,-1);

        \filldraw[lightgray] (11, -2) -- (12, -2) -- (12, -3) -- (11,-3);
		
		\draw[step=1] (0,-1) grid (10,7);
            \draw[step=1] (0, -3) grid (10, -1);

		\draw (-1,8)--(0,7);
		\draw (-0.7,7.3) node {$i$};
		\draw (-0.3,7.7) node {$j$};
		\draw (-0.5,6.5) node {$-3$};
		\draw (-0.5,5.5) node {$-2$};
		\draw (-0.5,4.5) node {$-1$};
		\draw (-0.5,3.5) node {$0$};
		\draw (-0.5,2.5) node {$1$};
		\draw (-0.5,1.5) node {$2$};
		\draw (-0.5,0.5) node {$3$};
		\draw (-0.5,-0.5) node {$4$};
            \draw (-0.5, -1.5)  node {$5$};
            \draw (-0.5, -2.5) node {$6$};

		\draw (0.5,7.5) node {$-3$};
		\draw (1.5,7.5) node {$-2$};
		\draw (2.5,7.5) node {$-1$};
		\draw (3.5,7.5) node {$0$};
		\draw (4.5,7.5) node {$1$};
		\draw (5.5,7.5) node {$2$};
		\draw (6.5,7.5) node {$3$};
		\draw (7.5,7.5) node {$4$};
		\draw (8.5,7.5) node {$5$};
		\draw (9.5,7.5) node {$6$};
		
		\draw (0.5,6.5) node {$1$};
		\draw (1.5,6.5) node {$2$};
		\draw (2.5,6.5) node {$3$};
		\draw (3.5,6.5) node {$4$};
		\draw (4.5,6.5) node {$4$};
		\draw (5.5,6.5) node {$4$};
		\draw (6.5,6.5) node {$4$};
		\draw (7.5,6.5) node {$4$};
		\draw (8.5,6.5) node {$4$};
		\draw (9.5,6.5) node {$4$};

		\draw (1.5,5.5) node {$1$};
		\draw (2.5,5.5) node {$2$};
		\draw (3.5,5.5) node {$3$};
		\draw (4.5,5.5) node {$4$};
		\draw (5.5,5.5) node {$4$};
		\draw (6.5,5.5) node {$4$};
		\draw (7.5,5.5) node {$4$};
		\draw (8.5,5.5) node {$4$};
		\draw (9.5,5.5) node {$4$};

		\draw (2.5,4.5) node {$1$};
		\draw (3.5,4.5) node {$2$};
		\draw (4.5,4.5) node {$3$};
		\draw (5.5,4.5) node {$3$};
		\draw (6.5,4.5) node {$4$};
		\draw (7.5,4.5) node {$4$};
		\draw (8.5,4.5) node {$4$};
		\draw (9.5,4.5) node {$4$};

		\draw (3.5,3.5) node {$1$};
		\draw (4.5,3.5) node {$2$};
		\draw (5.5,3.5) node {$2$};
		\draw (6.5,3.5) node {$3$};
		\draw (7.5,3.5) node {$3$};
		\draw (8.5,3.5) node {$4$};
		\draw (9.5,3.5) node {$4$};

		\draw (4.5,2.5) node {$1$};
		\draw (5.5,2.5) node {$2$};
		\draw (6.5,2.5) node {$3$};
		\draw (7.5,2.5) node {$3$};
		\draw (8.5,2.5) node {$4$};
		\draw (9.5,2.5) node {$4$};
		
		\draw (5.5,1.5) node {$1$};
		\draw (6.5,1.5) node {$2$};
		\draw (7.5,1.5) node {$3$};
		\draw (8.5,1.5) node {$4$};
		\draw (9.5,1.5) node {$4$};
		
		\draw (6.5,0.5) node {$1$};
		\draw (7.5,0.5) node {$2$};
		\draw (8.5,0.5) node {$3$};
		\draw (9.5,0.5) node {$4$};
		
		\draw (7.5,-0.5) node {$1$};
		\draw (8.5,-0.5) node {$2$};
		\draw (9.5,-0.5) node {$3$};

            \draw (8.5, -1.5) node {$1$};
            \draw (9.5, -1.5) node {$2$};

            \draw (9.5, -2.5) node {$1$};
		
		\draw [line width=2, color=blue] (9.8, -3) -- (9, -3) --(9,1) -- (5,1) -- (5,3) -- (3.8,3);

  \draw [line width=2, color=blue, dashed] (3.8, 3)--(3,3)--(3,7);

  \draw [line width=2, color=blue, dashed] (10, -3)--(9.8, -3);

        \draw [line width=2, color=magenta] (9.8, -2) -- (8, -2) -- (8, 4) -- (3.8, 4);

        \draw [line width=2, color=magenta, dashed] (3.8, 4)--(2, 4)--(2, 7);

        \draw [line width=2, color=magenta, dashed] (10, -2)--(9.8, -2);

        \draw [line width=2, color=red] (9.8, -1) -- (7, -1) -- (7,2) -- (4,2) -- (4,6) -- (3.8,6);

        \draw [line width=2, color=red, dashed] (3.8, 6)--(0,6)--(0,7);
        \draw [line width=2, color=red, dashed] (10, -1)--(9.8, -1);

        \draw [line width = 2, color=cyan] (9.8, 0) -- (6, 0) -- (6, 5) -- (3.8,5);

        \draw [line width=2, color=cyan, dashed] (3.8, 5)--(1, 5)--(1,7);

        \draw [line width=2, color=cyan, dashed] (10, -3) -- (10, 0)--(9.8, 0);

        \draw[dashed] (3.8,7) -- (3.8, -3);
        \draw[dashed] (9.8, 7) -- (9.8, -3);
		
		\end{tikzpicture}
	\caption{Cyclic rank matrix and matrix braid for the bounded affine permutation $f = [4, 6, 7, 9, 11, 8]$ of Example \ref{ex: running 1}. The dashed lines indicate that we restrict to $1 \leq j \leq 6$, and we have marked in gray the boxes $(i, f(i))$. 
    }
    \label{fig: rank matrix}
	\end{figure}

\noindent The conditions in Definition \ref{def:rankmatrix} imply that the matrix braid $M_k(r)\in\cB_k$ is a $k$-stranded tangle. These braids were introduced in \cite[Section 3.2]{STWZ}. The following result compares the braids $M_k(r)$ to the juggling braids $J_k(f)$.

\begin{thm}
	\label{thm: STZW}
	Let $r=(r_{ij})$ be a cyclic rank matrix of type $(k,n)$, $(i,j)\in\Z^2$, and $f$ its associated bounded affine permutation. The matrix braid $M_k(r)\in\cB_k$ is equivalent to the braid $J_k(f)\Delta_k \in\cB_k$.
\end{thm}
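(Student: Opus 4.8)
The plan is to show that both the matrix braid $M_k(r)$ and the braid $J_k(f)\Delta_k$ are described by the \emph{same} combinatorial data, namely the sequence of interval braids dictated by the bounded affine permutation $f$, and therefore coincide (up to Reidemeister III moves and conjugation, which is what ``equivalent'' means here). The cleanest route is to directly translate the local models in Figure \ref{fig:MatrixBraid} into interval-braid language. First I would fix the bijection, recalled right after Definition \ref{def:rankmatrix}, between cyclic rank matrices $r$ and bounded affine permutations $f$, via $f(i)=j$ iff $r_{ij}=r_{(i+1)j}=r_{i(j-1)}=r_{(i+1)(j-1)}+1$. The gray squares $(i,f(i))$ in the rank matrix are precisely the ``corners'' where the rank drops diagonally, and the boundary of the union of staircase paths separating rank $\rho$ from rank $\rho+1$ is exactly the arc picture of the juggling diagram for $f$, read off the antidiagonals. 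This is the conceptual heart: the five local models of Figure \ref{fig:MatrixBraid} are the five ways a pair of adjacent rank-boundary staircases can locally look near an entry, and each produces either a trivial strand segment, a crossing, or a cap/cup—which matches exactly the local models of Figure \ref{fig:JugglingBraid} constructing $J_k(f)$, \emph{plus} the extra $\Delta_k$ coming from the lower-circumference arcs (the ``closure'' arcs connecting $f(i)$ to $f(i)-n$), as noted in Subsection \ref{ssec:JugglingBraid} right after Definition \ref{def:juggling}.

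Concretely I would proceed as follows. Step 1: set up the dictionary between the staircase paths in the rank matrix $r$ and the arcs $A_i(f)$ of the juggling diagram of $f$; the $m$-th staircase (boundary between $\{r<m\}$ and $\{r\ge m\}$) corresponds to the $m$-th strand of the braid, and reading the matrix column by column (equivalently along antidiagonals) is the same as reading the juggling diagram left to right. Step 2: check the five local models of Figure \ref{fig:MatrixBraid} one by one. Using condition (ii) of Definition \ref{def:rankmatrix} (adjacent entries differ by $0$ or $1$) and condition (iii) (the ``no isolated drop'' rule), enumerate the possible $2\times 2$ patterns of rank values around an entry $r_{ij}=\rho$; each pattern forces exactly one of the five pictures, and in each case one verifies the red strands produced agree with the juggling-diagram local model for the corresponding arc configuration. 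Step 3: account for the boundary of the grid $i\in[1,n]$: condition (i) says $r_{ij}=0$ for $j<i$ and $r_{ij}=k$ for $j\ge i+n-1$, which pins down the $k$ strands entering on the left and exiting on the right, and the wrap-around $r_{ij}=r_{(i+n)(j+n)}$ is what makes $M_k(r)$ a genuine $k$-stranded braid; here is where the $\Delta_k$ appears—the strands that in the juggling picture would need a lower arc to close up instead contribute a half-twist when we cut the periodic picture into a single fundamental domain. Step 4: conclude that the tangle diagram of $M_k(r)$ is literally (up to isotopy of the diagram, i.e. Reidemeister II/III moves) the diagram of $J_k(f)\Delta_k$, hence the braids are equivalent.

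The main obstacle I anticipate is Step 3, the bookkeeping at the boundary of the fundamental domain: one must be careful that cutting the bi-infinite periodic rank matrix along $i=1$ and $i=n$ produces exactly one factor of $\Delta_k$ and not, say, $\Delta_k^{\pm2}$ or a conjugate thereof, and that the cyclic ambiguity in where one cuts corresponds precisely to the conjugation ambiguity allowed in the statement (``conjugate, and thus equivalent'' in Theorem \ref{thm:main1}.(ii)). This is a framing/normalization check rather than a deep point, but it is the place where an off-by-one in the half-twist could hide. A secondary, purely combinatorial nuisance is verifying that condition (iii) of Definition \ref{def:rankmatrix} is exactly what rules out the ``forbidden'' local configurations that would otherwise give an inconsistent (non-planar or non-braid) diagram—this should follow by the same case analysis as in \cite{STWZ}, which can be cited. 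Indeed, since the matrix braid $M_k(r)$ was introduced in \cite{STWZ} and its relation to juggling patterns is implicit there, a large part of the argument can be phrased as a careful comparison of conventions with that reference, with the genuinely new content being the explicit identification of the $\Delta_k$ factor and the resulting clean braid-theoretic statement; in particular, combining this with Lemma \ref{lemma:length of J} gives a consistency check on the number of crossings, which I would include as a sanity verification at the end.
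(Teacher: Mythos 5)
Your proposal takes essentially the same approach as the paper: trace the strands of $M_k(r)$ through the rank matrix and match them to the arcs of the juggling diagram for $f$. The paper's actual proof (which is only a few sentences) is a slicker version of your Steps 1--3: it observes that $r_{ij}-r_{(i+1)j}=1$ iff $j<f(i)$, so each $i$ with $f(i)\neq i$ produces an L-shaped strand piece starting at the bottom-left corner of square $(i,i)$, turning down at $(i,f(i))$, and proceeding vertically; when $f(i)\le n$ the vertical piece hits the diagonal (the cusp-smoothing case of Figure \ref{fig:JugglingBraid}) and when $n<f(i)$ it exits the bottom of the grid and connects to $f(i)-n$ after closure (the long-arc case). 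This last point resolves your anticipated obstacle about where $\Delta_k$ comes from: the wrap-around is exactly the phenomenon noted after Definition \ref{def:juggling}, that the lower-circumference closure arcs contribute an extra $\Delta_k$ factor, and the cyclic freedom in where to cut the fundamental domain is precisely the allowed conjugation ambiguity. One small difference: the paper indexes strands by the source $i$ rather than by the rank threshold $m$ as in your Step 1 (these are compatible but the paper's parametrization matches the arcs $A_i(f)$ one at a time more directly), and the paper carries out the case check of Figure \ref{fig:MatrixBraid} itself rather than citing \cite{STWZ}, so your plan to offload the local case analysis to that reference would not reproduce the paper's self-contained argument, though it would still be valid.
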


\begin{proof} 
Let us first restrict to the case $i \in \{f^{-1}(1), \dots, f^{-1}(n)\}$ and let us look at the horizontal segment separating the rows $i$ and $i+1$. If $f(i) = i$, then there is no horizontal segment of the braid separating these rows. Else, we have a horizontal segment of the braid connecting $(i, f(i))$ to $(i,i)$ (or to $(i,1)$, if $i \leq 0$), which corresponds to the arc $A_{f(i)}$ in the juggling diagram of $f$, connecting $f(i)$ to $i$. 

Assume now $i \not\in \{f^{-1}(1), \dots, f^{-1}(n)\}$. If $f(i) < 0$, and $i \neq f(i)$, then the horizontal segment of the infinite braid $M_k^{\infty}(r)$ separating the rows $i$ and $i+1$ is contained entirely outside (to the left) of the strip $1 \leq j \leq n$. The same holds if $f(i) > n$ and $i \not\in \{1, \dots, n\}$. It remains to see the case $i \in \{1, \dots, n\}$ and $f(i) > n$. Note that there are exactly $k$ such values of $i$. In this case, we will have a horizontal segment in the braid $M_k(r)$ connecting $(i, n)$ to $(i,i)$. This horizontal segment will intersect exactly once with any strand in the braid that contains a vertical segment separating $j$ and $j+1$ for $j > i$. Thus, up to braid moves pulling the strands to the right, the braid $M_k(r)$ is of the form $J_k(f)\Delta_k$, as needed. 

\end{proof}


\section{Legendrian Links and positroid data}\label{sec:Legendrian}


The goal of this section is to associate Legendrian links to positroid data and show that equivalent positroid data yield Legendrian isotopic links, up to trivially adding unlinked unknots. These Legendrian links are introduced in Subsection \ref{ssec:Leglinks}. Theorem \ref{thm:Leg_allsame} establishes the necessary Legendrian isotopies. It is proven in Subsection \ref{ssec:Leg_isotopic}, after the main technical lemma is proven in Subsection \ref{ssec:MarkovMoves}.

\subsection{Legendrian links associated to positroid data}\label{ssec:Leglinks} In Section \ref{sec:RichardsonJuggling}  we introduced the following braid words associated to positroid data:

\begin{enumerate}
    \item For $u,w\in S_n$ permutations and $w$ $k$-Grassmannian, the positive Richardson braid word 
    $R^{+}_{n}(u, w)=\beta(u^{-1}w_0)\beta(w^*)$,
    in Subsection \ref{ssec:RichardsonBraid}. It is a positive $n$-stranded braid word.\\

    \item For a bounded affine permutation $f$ of size $n$, the juggling braid $J_k(f)=J_k^{(2)}(f)J^{(1)}_k(f)$, in Subsection \ref{ssec:JugglingBraid}. It is a positive $k$-stranded braid word. By Corollary \ref{cor: intervals juggling}, $J_k^{(1)}$ is a product of interval braids, and Proposition \ref{prop: J2}  shows that $J^{(2)}_k(f)$ is a permutation braid.\\

    \item For a Le diagram $\Le$, the Le braid $D_k(\Le)=D_k^{(2)}(\Le)D_k^{(1)}(\Le)$, in Subsection \ref{ssec:LeBraid}. It is a $k$-stranded braid word. By construction, $D_k^{(1)}(\Le)$ is a positive braid word and $D_k^{(2)}(\Le)$ is a permutation braid with all its crossings being negative.\\

    \item For a  cyclic rank matrix $r$, the  matrix braid $M_k(r)$, in Subsection \ref{ssec:MatrixBraids}. It is a $k$-stranded positive braid word.
\end{enumerate}

\noindent Let us use \cite[Section 2.2]{CasalsNg} to associate a Legendrian link in $(\R^{3},\xi_{st})$ to a positive braid word. Recall that the front projection of a Legendrian link in $(\R^{3},\xi_{st})$ with the standard contact form $\xi_{st}=\ker\{dz-ydx\}$ is its projection onto the $(xz)$-plane $\R^2_{x,z}$. The front of a Legendrian link recovers the link, cf.~\cite[Section 3.2]{Geiges08}.

\begin{definition}[\cite{CasalsNg}]\label{defn: -1 closure} Let $\beta$ be a positive braid word. By definition, the $(-1)$-closure of $\beta$ is the Legendrian link  $\Lambda_\beta\sse(\bR^3,\xi_{st})$ whose front projection is drawn in Figure \ref{fig:-1 closure of a positive braid} (left).\hfill$\Box$
\end{definition}

\begin{figure}[H]
    \centering
    \begin{tikzpicture}[baseline=0,scale=0.8]
    \draw [dashed] (0,0.25) rectangle node [] {$\beta$} (4,2.25);
    \foreach \i in {1,3,4}
    {
    \draw [decoration={markings,mark=at position 0.5 with {\arrow{>}}},postaction={decorate}] (4,0.5*\i) to [out=0,in=180] (5,1+0.5*\i) to [out=180,in=0] (4,2+0.5*\i) -- (0,2+0.5*\i) to [out=180,in=0] (-1,1+0.5*\i) to [out=0,in=180] (0,0.5*\i);
    }
    \node at (2,3) [] {\footnotesize{$\vdots$}};
    \node at (-1,2) [] {\footnotesize{$\vdots$}};
    \node at (5,2) [] {\footnotesize{$\vdots$}};
    \end{tikzpicture}\hspace{2cm}
    \begin{tikzpicture}[baseline=-20,scale=0.8]
        \draw [dashed] (0,0.25) rectangle node [] {$\beta$} (4,2.25);
        \foreach \i in {1,3,4}
    {
    \draw [decoration={markings,mark=at position 0.5 with {\arrow{>}}},postaction={decorate}](-1,0.5*\i) -- (0,0.5*\i);
    \draw [decoration={markings,mark=at position 0.5 with {\arrow{>}}},postaction={decorate}](4,0.5*\i) -- (5,0.5*\i);
    }
    \draw [dotted] (-1,0) -- (-1,2.5);
    \draw [dotted] (5,0) -- (5,2.5);
    \node at (-0.5,1) [] {\footnotesize{$\vdots$}};
        \node at (4.5,1) [] {\footnotesize{$\vdots$}};
    \end{tikzpicture}
    \caption{(Left) $(-1)$-closure of a positive braid word $\beta$. (Right) A front in $S^1\times\R$  associated to $\beta$: its satellite along the standard Legendrian unknot yields the $(-1)$-closure on the left.}
    \label{fig:-1 closure of a positive braid}
\end{figure}

\noindent Let $\beta'$ be a positive braid word obtained from $\beta$ by applying braid moves. The Legendrian Reidemeister moves can then be used the show that $\Lambda_{\beta'}$ is Legendrian isotopic to $\Lambda_\beta$, cf.~\cite[Section 2.3]{Etnyre05} and \cite{Kalman}. In particular, the Legendrian isotopy type of $\Lambda_\beta$ is independent of the choice of braid word representing the element inside the positive monoid.

Definition \ref{defn: -1 closure} can now be applied to the positive Richardson and juggling braids, both of which are positive braids. This leads to the following definitions:

\begin{definition}[Richardson Legendrians]\label{def:Richardson_Leg}
Let $u, w \in S_n$ be two permutations such that $u \leq w$ in the Bruhat order and $w$ is $k$-Grassmannian. By definition, the Legendrian Richardson link $\Lambda^+(u, w) \subseteq (\R^{3},\xi_{st})$ is the $(-1)$-closure of $\Delta_{n}R^{+}_{n}(u, w)$.
\hfill$\Box$
\end{definition}

To ease notation, we often denote the Legendrian link $\Lambda^+(u, w) \subseteq (\R^{3},\xi_{st})$ in Definition \ref{def:Richardson_Leg} by $\Lambda(u, w) \subseteq (\R^{3},\xi_{st})$. See Subsection \ref{ssec:Lag_negative} for results making this abuse of notation justified.

\begin{definition}[Juggling Legendrians]\label{def:juggling_Leg}
Let $f:\Z\lr\Z$ be bounded affine permutation of size $n$. By definition, the Legendrian juggling link $\Lambda(f) \subseteq (\R^{3},\xi_{st})$ is the $(-1)$-closure of $\Delta_{k}J_k(f)$.\hfill$\Box$
\end{definition}

\begin{definition}[Matrix Legendrians]\label{def:matrix_Leg}
Let $r$ be a cyclic rank matrix of type $(k,n)$. By definition, the Legendrian matrix link $\Lambda(r) \subseteq (\R^{3},\xi_{st})$ is the $(-1)$-closure of $M_k(r)$.\hfill$\Box$
\end{definition}

\noindent Note that Theorem \ref{thm: STZW} implies that the matrix braid $M_k(r)\in\cB_k$ is directly equivalent to $\Delta_kJ_k(f)\in\cB_k$ and they are both positive and $k$-stranded.

\noindent By construction, the smooth links underlying the Legendrian links $\Lambda(u, w)$ and $\Lambda(f)$ in Definitions \ref{def:Richardson_Leg}, \ref{def:juggling_Leg} and \ref{def:matrix_Leg} above coincide with the homonymous links introduced in Section \ref{sec:RichardsonJuggling}. This justifies using the same notation. Since the Le braid $D_k(\Le)$ is not a positive braid,  Definition \ref{defn: -1 closure} cannot be used directly. An appropriate modification will suffice, as follows.

\begin{definition}[Le Legendrians]\label{def:Le_Leg}
Let $\Le$ be a Le diagram. By definition, the $(-1)$-closure of a positive braid  of the form $\Delta_k \beta(w_0D_k^{(2)}(\Le))D^{(1)}_{k}(\Le)$ is said to be a Legendrian Le link $\Lambda(\Le) \subseteq (\R^{3},\xi_{st})$.
\hfill$\Box$
\end{definition}

\noindent First, $\beta(w_0D_{k}^{(2)}(\Le))$ denotes a positive braid lift of  the permutation $w_0D_{k}^{(2)}(\Le)$, e.g.~ as in Definition \ref{def:Richardson}. Second, any two different choices of such lifts lead to equivalent braid words and it follows that the Legendrian links they define are Legendrian isotopic. Thus our use of the notation $\Lambda(\Le)$ for any such Legendrian links, which does not explicitly refer to the choice of lift. By construction, the smooth link underlying any such $\Lambda(\Le)$ in Definitions \ref{def:Le_Leg} coincides with the homonymous link introduced in Section \ref{sec:RichardsonJuggling}.

The main goal of the rest of this section is to prove the following result:

\begin{thm}\label{thm:Leg_allsame} Let $u, w \in S_n$ be two permutations, with $w$  $k$-Grassmannian, $f:\Z\lr\Z$ be bounded affine permutation of size $n$ and $\Le$ a Le diagram  all representing the same positroid data. Then the Legendrian links $\Lambda(u, w),\Lambda(f),\Lambda(\Le)$ and $\La(r)$ are all Legendrian isotopic in $(\R^{3},\xi_{st})$, up to adding unlinked max-tb Legendrian unknots.
\end{thm}

\noindent Theorem \ref{thm:Leg_allsame} is proven in Subsection \ref{ssec:Leg_isotopic} below, once we have established Lemma \ref{lem:Markov}. In fact, the proof shows that $\Lambda(f),\Lambda(\Le)\sse(\R^{3},\xi_{st})$ are Legendrian isotopic, without adding any max-tb Legendrian unknots. Here max-tb stands for maximal Thurston-Bennequin number, cf.~ \cite[Section 2.6]{Etnyre05} for details on the Thurston-Bennequin invariant.


\subsection{Variations on the Markov move}\label{ssec:MarkovMoves} The Legendrian links that we are comparing are associated to braids with different number of strands. For instance, the Richardson braids are $n$-stranded and the juggling braids are $k$-stranded for a positroid in $\Gr(k,n)$. In order to show that these Legendrian links are Legendrian isotopic, we must therefore apply a type of (de)stabilization. This is also the reason to work in $(\R^{3},\xi_{st})$ instead of the 1-jet space $(J^1S^1,\xi_{st})$. The following destabilization lemma,  which implies Lemma \ref{lem:Markovsmooth}, will suffice for our purposes:

\begin{center}
	\begin{figure}[h!]
		\centering
		\includegraphics[width=\textwidth]{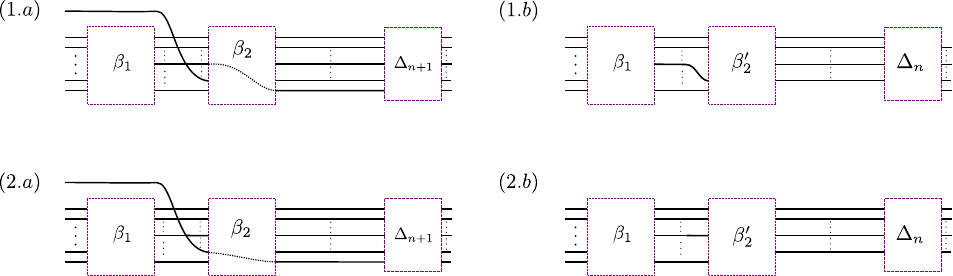}
		\caption{Lemma \ref{lem:Markov} states that two fronts $(1.a)$ and $(1.b)$, resp.~$(2.a)$ and $(2.b)$ have front homotopic $(-1)$-closures, resp.~front homotopic $(-1)$-closures up to an unlinked max-tb unknotted component. Note that these fronts, if to be understood cyclically in $S^1_\theta\times\R_z$, do {\it not} yield Legendrian (or even smoothly) isotopic links. This forces considering $(-1)$-closures so as to find the Legendrian isotopy in $(\R^3,\xi_{st})$.
  }
		\label{fig:Ingredients_FrontsMarkov_Statement}
	\end{figure}
\end{center}

\begin{lemma}[Legendrian destabilizations]\label{lem:Markov} Let $\beta_1\in\Br_n$ be a positive braid on $n$ strands in the Artin generators $\sigma_1,\ldots,\sigma_{n-1}$ and $\beta_2\in\Br_{n+1}$ a reduced positive lift of a permutation $w\in S_{n+1}$ on $(n+1)$ strands. Set $j:=w^{-1}(n+1)$, let $\tilde{\beta}_1 \in \Br_{n+1}$ the braid obtained from $\beta_1$ 
by shifting all the indices of the Artin generators up by $1$, and let $\beta'_2 \in \Br_n$ be the braid obtained from $\beta_2$ by removing the strand that ends at the bottom on the right of $\beta_2$. Then the following two statements hold:


\begin{itemize}
    \item[$(1)$] Consider the following two positive braid words:\\
    
    \begin{itemize}
        \item[(a)] $\eta_1 := \Delta_{n+1}\beta_2 (\sigma_{\ell}\cdot\ldots\cdot\sigma_{j+1}\sigma_j\cdot\ldots\cdot\sigma_2\sigma_1)\tilde{\beta}_1 \in \Br_{n+1}$, the $(n+1)$-stranded positive braid depicted in Figure \ref{fig:Ingredients_FrontsMarkov_Statement}.$(1.a)$,\\

        \item[(b)] $\eta_2:=\Delta_n\beta'_2(\sigma_{\ell - 1}\cdot\ldots\cdot\sigma_{j+1})\beta_1 \in \Br_n$, the $n$-stranded positive braid depicted in Figure \ref{fig:Ingredients_FrontsMarkov_Statement}.$(1.b)$.
        \\
    \end{itemize} 

\noindent Then the Legendrian link $\La_{\eta_1}$ is Legendrian isotopic to $\La_{\eta_2}$.\\

     \item[$(2)$] Consider the following two positive braid words:\\
    
    \begin{itemize}
        \item[(a)] $\eta_1:=\Delta_{n+1}\beta_2(\sigma_{j}\sigma_{j-1}\cdot\ldots\cdot\sigma_1)\tilde{\beta}_1 \in \Br_{n+1}$, the $(n+1)$-stranded positive braid depicted in Figure \ref{fig:Ingredients_FrontsMarkov_Statement}.$(2.a)$,\\

        \item[(b)] $\eta_2:=\Delta_n\beta'_2\beta_1\in \Br_n$, the $n$-stranded positive braid depicted in Figure \ref{fig:Ingredients_FrontsMarkov_Statement}.$(2.b)$.
        \\

    \end{itemize}
\noindent Then the Legendrian link $\La_{\eta_1}$ is Legendrian isotopic to the Legendrian link given by a max-tb Legendrian unknot unlinked union with $\La_{\eta_2}$.\\
\end{itemize}
\end{lemma}

\begin{center}
	\begin{figure}[h!]
		\centering
		\includegraphics[width=\textwidth]{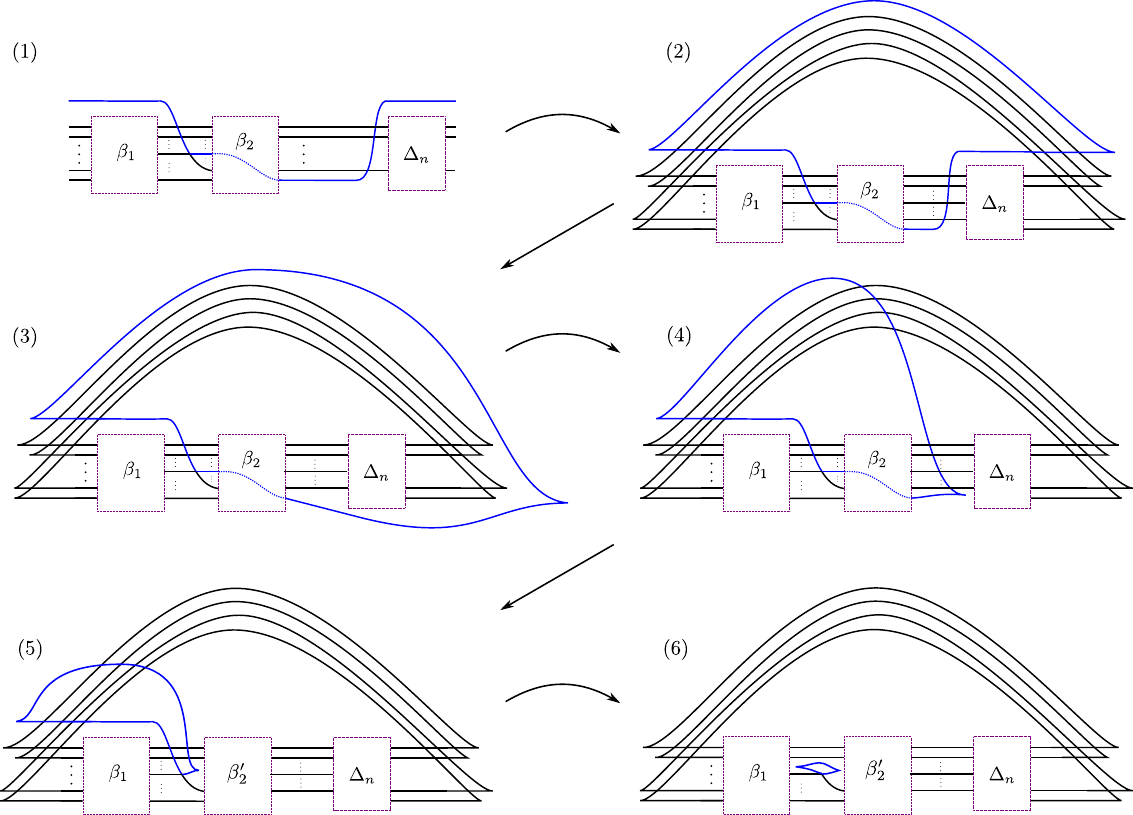}
		\caption{In (1) the starting piece of a front. From (2)-(6), the Legendrian isotopy that proves Lemma \ref{lem:Markov} for its $(-1)$-closure.}
		\label{fig:Ingredients_FrontsMarkov2_General}
	\end{figure}
\end{center}

\begin{proof}
The proof is contained in Figure \ref{fig:Ingredients_FrontsMarkov2_General}, for item $(1)$, and \ref{fig:Ingredients_FrontsMarkov2_General_Case2}, for item $(2)$, which we now explain. First item $(1)$. Choose a braid word for the half-twist $\Delta_{n+1}$ which is of the form $\Delta_n(\sigma_n\sigma_{n-1}\ldots \sigma_2\sigma_1)$. The resulting braid is then as in Figure \ref{fig:Ingredients_FrontsMarkov2_General}.(1) and its $(-1)$-closure is depicted in Figure \ref{fig:Ingredients_FrontsMarkov2_General}.(2). 

The fronts in Figure \ref{fig:Ingredients_FrontsMarkov2_General}.(2)-(6) are then front homotopic, i.e.~realized by Legendrian isotopies, as follows. Front (2) to (3) consists of a series of Reidemeister III moves followed by a sequence of Reidemeister II moves that pull the top (blue) strand to the right of the front past $\Delta_n$ and $n$ right cusps. Front (3) to (4) is the reverse of that sequence applied to the piece of the (blue) strand above the rightmost (blue) cusp: first a sequence of Reidemeister II moves and then a sequence of Reidemeister III moves. Front (4) to (5) is a sequence of Reiedemeister III and Reidemeister II moves that pull the right blue cusp pointing down up to the center region. The sequence of Reidemeister III moves necessary to realize this Legendrian isotopy from (4) to (5) exists because $\beta_2$ is (the lift of) a permutation braid. In particular, the blue strand inside the $\beta_2$-box in Front (4) always goes above the other strands. Then the isotopy from (4) to (5) is as follows: first pull the blue strand exiting the right blue cusp from above across part of the $\beta_2$-box via Reidemeister III moves; then use Reidemeister II moves to pull the right blue cusp through the $\beta_2$-box so as to reach Front (5). Finally, Front (5) to (6) is a sequence of Reidemeister II moves that pulls the top leftmost blue cusp to the region containing the right blue cusp. Front (6) is homotopic to the $(-1)$-closure of $\eta_2$ via a Reidemeister I move applied to the crossing between $\beta_1$ and $\beta_2$, which proves item (1) of the lemma.

For item (2), proceed as in item (1) by choosing a braid word for the half-twist $\Delta_{n+1}$ of the form $$\Delta_n(\sigma_n\sigma_{n-1}\cdot \ldots \cdot \sigma_2\sigma_1).$$
The resulting braid is drawn in Figure \ref{fig:Ingredients_FrontsMarkov2_General_Case2}.(1) and its $(-1)$-closure is in Figure \ref{fig:Ingredients_FrontsMarkov2_General_Case2}.(2). From Front (2) to Front (3) we perform a sequence of Reidemeister III and then Reidemeister II moves moving the right piece of the blue strand (under the cusps). This is the same first step as for item (1). From Front (3) to Front (4) we perform a movie similar to the previous one but to the left piece, using the left cusps. The only difference is that we must pull the strand through a piece of the $\beta_2$-box, as indicated by the red arrows in Figure \ref{fig:Ingredients_FrontsMarkov2_General_Case2}.(3). This is indeed possible because $\beta_2$ is a permutation braid. Once at Front (4), the blue component is a max-tb Legendrian unknot which is unlinked from the other components of the Legendrian link, thus item (2) follows.

\end{proof}

\begin{center}
	\begin{figure}[h!]
		\centering
		\includegraphics[width=\textwidth]{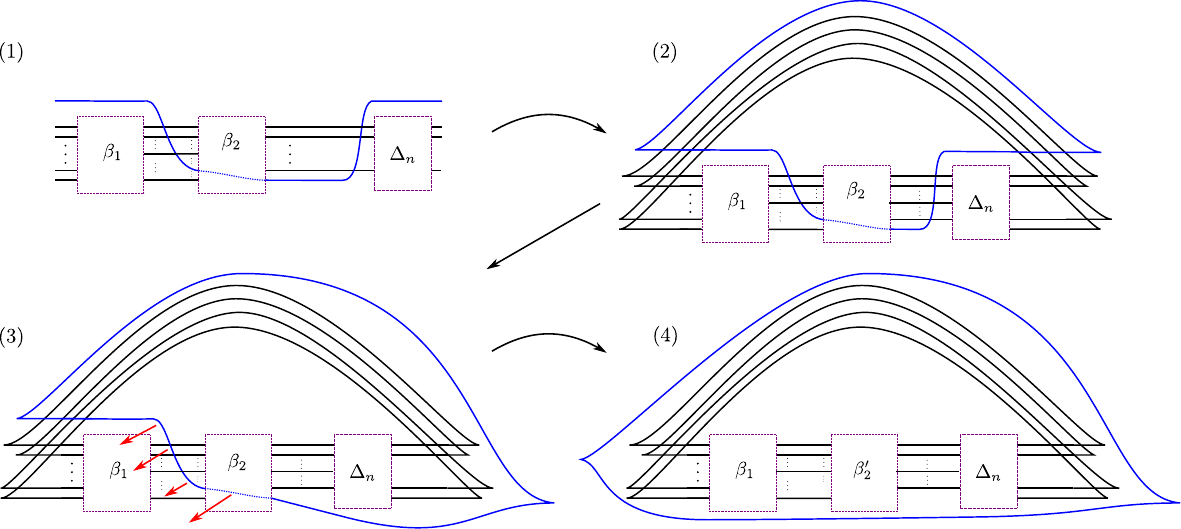}
		\caption{In (1) the starting piece of a front. From (2)-(4), the Legendrian isotopy that proves Lemma \ref{lem:Markov} for its $(-1)$-closure.}
		\label{fig:Ingredients_FrontsMarkov2_General_Case2}
	\end{figure}
\end{center}


\begin{remark}
Lemma \ref{lem:Markov} does not hold for general positive braid words $\beta_2\in\Br_{n+1}$. 
Fortunately, the hypothesis of $\beta_2$ being a permutation braid is met for the positroid Richardson braids.\hfill$\Box$
\end{remark}


\subsection{Proof of Theorem \ref{thm:Leg_allsame}}\label{ssec:Leg_isotopic} Let us conclude Theorem \ref{thm:Leg_allsame} from the results established so far. Let us first show that $\Lambda(u, w)$ and $\Lambda(f)$ are Legendrian isotopic in $(\R^{3},\xi_{st})$, up to adding unlinked max-tb Legendrian unknots. For that we follow the proof of Theorem \ref{thm:Richardson_to_juggling} in Subsection \ref{ssec:destabilization}. Thanks to Lemma \ref{lem:Markov}, we claim that the same argument gives a Legendrian isotopy, not just a smooth one. Indeed,  it suffices to note the following two facts:
\begin{itemize}
    \item[(i)] All braids being used in the proof of Theorem \ref{thm:Richardson_to_juggling} are positive braids.  Therefore, at any stage, we can consider the $(-1)$-closure and have the argument be about Legendrian links.\\

    \item[(ii)] The only result that is used in the proof of Theorem \ref{thm:Richardson_to_juggling} is Lemma \ref{lem:Markovsmooth}. Since Lemma \ref{lem:Markov} is precisely a Legendrian realization of Lemma \ref{lem:Markovsmooth}, we can apply the argument in the proof of Theorem \ref{thm:Richardson_to_juggling} to Legendrian links and use instead Lemma \ref{lem:Markovsmooth} each time that Lemma \ref{lem:Markovsmooth} is invoked in that smooth proof.\\
\end{itemize}
\noindent This concludes the desired statement about $\Lambda(u, w)$ and $\Lambda(f)$. The Legendrian links $\Lambda(f)$ and $\Lambda(r)$ are Legendrian isotopic because they are $(-1)$-closures of equivalent positive $k$-stranded braids, thanks to Theorem~\ref{thm: STZW}.

Finally, let us now show that $\Lambda(f)$ and $\Lambda(\Le)$ are Legendrian isotopic. Sections \ref{ssec:JugglingBraid} and \ref{ssec:LeBraid}  provided the decompositions $J_k(f)=J^{(2)}_k(f)J_k^{(1)}(f)$ and $D_k(\Le)=D_k^{(2)}(\Le)D_k^{(1)}(\Le)$. Here, $J_k^{(1)}(f)$ and $D_k^{(1)}(\Le)$ are $k$-stranded positive braids and $J_k^{(2)}(f)$ and $D_k^{(2)}(\Le)$ are reduced, but the latter has negative crossings. We claim that the two positive braid words $\Delta_kJ^{(2)}_k(f)J_k^{(1)}(f)$ and $\Delta_{k}\beta(w_0D_{k}^{(2)}(\Le))D_k^{(1)}(\Le)$ are equivalent through positive braid words. For ease of notation, we denote $J_i:=J_k^{(i)}(f)$ and $D_i:=D_k^{(i)}(\Le)$, $i=1,2$, and write $\beta_1\stackrel{+}{\sim}\beta_2$ if two positive braid words are equivalent through positive braid words.

Let us prove the claim. First, Lemma \ref{lem: D1 vs J1}  and its proof imply that $J_1\stackrel{+}{\sim}D_1$. Second, we now need $J_2\stackrel{+}{\sim}\beta(w_0D_2)$. Lemma \ref{lem: D2 vs J2} and its proof show that $J_2D_2^{-1}\stackrel{+}{\sim}\Delta_k$, where $D_2^{-1}$ is a positive braid word. By construction, $\Delta_k\stackrel{+}{\sim}\beta(w_0D_2)D_2^{-1}$ and thus $J_2D_2^{-1}\stackrel{+}{\sim}\beta(w_0D_2)D_2^{-1}$. By ~\cite[Prop.~II.4.7]{Dehornoy00}, the positive braid monoid is right-cancellative and thus $J_2D_2^{-1}\stackrel{+}{\sim}\beta(w_0D_2)D_2^{-1}$ implies $J_2\stackrel{+}{\sim}\beta(w_0D_2)$. Therefore, we have $J_1\stackrel{+}{\sim}D_1$ and $J_2\stackrel{+}{\sim}\beta(w_0D_2)$. Combined, these imply $\Delta_kJ_2J_1\stackrel{+}{\sim}\Delta_k\beta(w_0D_2)D_1$. Hence, their $(-1)$-closures $\Lambda(f)$ and $\Lambda(\Le)$ are Legendrian isotopic.\hfill$\Box$

\subsection{Lagrangian projections and negative crossings}\label{ssec:Lag_negative} This subsection is not logically needed for the rest of the results in this manuscript, but we include this brief discussion for completeness. Both in Section \ref{sec:RichardsonJuggling} above and the literature, see e.g.~\cite[Section 3]{GL}, braids with negative crossings are discussed in relation to positroids. In the former instance, both the Richardson braid word, in Definition \ref{def:Richardson} in Subsection \ref{ssec:RichardsonBraid}, and the Le braid $D_k(\Le)$, in Subsection \ref{ssec:LeBraid}, typically have negative crossings. This raises the question of whether there are Legendrian links in $(\R^{3},\xi_{st})$ that can be naturally associated to them. In particular, this would allow us to describe certain types of varieties and their algebraic combinatorics, associated to such braid words (with some negative crossings), intrinsically in terms of Legendrian links and their invariants, cf.~\cite{CGGS,CasalsNg}.

\noindent The answer is affirmative in this context, as the Lagrangian projection allows for the introduction of negative crossings in the following case. It is possible to introduce a cancelling pair $vv^{-1}$ where $v$ is a reduced positive braid word for a permutation braid. This technique is known as dipping in the literature. For dipping, also known as splashing in \cite[Section 3.2]{Dip1}, we refer to \cite[Section 2.4]{Dip2}. We refer to \cite[Section 2.2]{CasalsNg} for the definition and details on the Lagrangian $(-1)$-closure of a braid word.  In conclusion,  we can use the following lemma to associate Legendrian links to both the Richardson and Le braids, which are described by braid words with some negative crossings.

\begin{lemma}\label{lem:neg_cross} Let $\gamma\in\Br_n^+$ be a positive braid word and $v\in S_n$ a permutation. Then:
\begin{enumerate}
    \item There exists a Legendrian link $\La(v,\gamma)\sse (\R^{3},\xi_{st})$ whose Lagrangian projection is Hamiltonian isotopic to the Lagrangian $(-1)$-closure of the braid word $\Delta_n^{2}\beta(v)^{-1}\gamma$, for a choice of positive braid word $\Delta_n$ for the half-twist.\\

    \item $\La(v,\gamma)$ is Legendrian isotopic to the Legendrian $(-1)$-closure of $\Delta_n\beta(w_0v^{-1})\gamma$.
\end{enumerate}
\end{lemma}

\begin{proof}
By \cite[Prop.~2.7]{CasalsNg} the positive braid word $\Delta_n\beta(w_0v^{-1})\gamma$ is admissible, as defined in \emph{loc.~cit.}. In particular, there exists a Legendrian link $\La'(v,\gamma)\sse (\R^{3},\xi_{st})$ whose Lagrangian projection is the Lagrangian $(-1)$-closure of the braid word $\Delta_n\beta(w_0v^{-1})\gamma$. Note that \emph{ibid.}~also implies that $\La'(v,\gamma)$ is Legendrian isotopic to the Legendrian $(-1)$-closure of $\Delta_n\beta(w_0v^{-1})\gamma$. By dipping according to the permutation $v$, introducing $\beta(v)\beta(v^{-1})$ to the Lagrangian diagram exactly between $\beta(w_0v^{-1})$ and $\gamma$, there exists a Legendrian isotopy from $\La'(v,\gamma)$ to a Legendrian link $\La(v,\gamma)$ whose Lagrangian projection is the Lagrangian $(-1)$-closure of the braid word $\Delta_n\beta(w_0v^{-1})\beta(v)\beta(v^{-1})\gamma$. This braid is a positive braid word for $\Delta_n^{2}\beta(v)^{-1}\gamma$. Note that we can dip according to $\beta(v)^{-1}\beta(v)$, instead of $\beta(v)\beta(v)^{-1}$. Indeed, this is because the differences in height between the strands in the front diagram provided by \cite[Prop.~2.7]{CasalsNg}, away from a neighborhood of the crossings, are strictly increasing as we move to the right, instead of left, as in the Ng resolution \cite{Ng03}.\end{proof}



\noindent Lemma \ref{lem:neg_cross} can be applied to Richardson and Le braids, as follows:
\begin{itemize}
    \item[(a)] $\gamma=w$ and $v=u$, with $u,w\in S_n$, $w$ a $k$-Grassmannian permutation, and $u
    \leq w$ in the Bruhat order. Then Lemma \ref{lem:neg_cross} implies that $\La(u,w)$ is a Legendrian representative for the Richardson braids in Definition \ref{def:Richardson}, even if they contained negative crossings. Lemma \ref{lem:neg_cross}.(2) implies that $\La(u,w)$ is Legendrian isotopic to $\La^+(u,w)$ as introduced in Definition \ref{def:Richardson_Leg}.\\

    \item[(b)] For a Le diagram $\Le$, we can choose $\gamma=D_k^{(1)}(\Le)$ and $v=(D_k^{(2)}(\Le))^{-1}$. Then Lemma \ref{lem:neg_cross} constructs a Legendrian link $\La((D_k^{(2)}(\Le))^{-1},D_k^{(1)}(\Le))$ for the Le braid $D_k^{(2)}(\Le)D_k^{(1)}(\Le)$, even if it has negative crossings. By Lemma \ref{lem:neg_cross}.(2), $\La((D_k^{(2)}(\Le))^{-1},D_k^{(1)}(\Le))$ is Legendrian isotopic to $\La(\Le)$ as introduced in Definition \ref{def:Le_Leg}.
\end{itemize}

\noindent In particular, the above gives a symplectic geometric enhancement of the smooth links presented in \cite[Section 3]{GL}: they can be naturally defined as Legendrian links, from which positroid strata can be extracted by studying the Legendrian contact dg-algebra, see e.g.~\cite{CGGS,CasalsNg,CasalsWeng22}.



\section{Braid Varieties, Richardson varieties and Brick Manifolds}\label{sec:brick} In this section, we study braid varieties associated to positroid braids. In particular, we prove Theorems \ref{thm:rich vs juggling intro} and \ref{thm: intro brick}. In Subsection \ref{ssec:Richardson_braidvar}, we show that open Richardson varieties in type $A$, i.e.~$G=\GL(n,\C)$, are braid varieties. Subsection \ref{ssec:compactify} shows that brick manifolds $\brick(\beta)$, for any choice of braid word $\beta\in\cB$, provide {\it different} smooth projective compactifications of the {\it same} braid variety $X(\beta)$. Finally, Subsection \ref{ssec:equiv_hom} uses Subsection \ref{ssec:compactify} to present a description of the homology of braid varieties in terms of brick manifolds and also establishes the relation to Khovanov-Rozansky homology.



\subsection{Preliminaries on braid varieties}\label{ssec:braid_prelim} Braid varieties are defined as follows:

\begin{definition}[\cite{CGGS}]\label{def:braid_var}
Let $\beta$ be a positive braid word $\beta\in \cB^+_n$, $\beta=\sigma_{i_1}\cdots \sigma_{i_\ell}$, and $\pi\in \GL(n,\bC)$ a permutation matrix. By definition, the {\em braid variety} associated to $\beta$ and $\pi$ is
$$
X(\beta;\pi):=\left\{(z_1,\ldots,z_\ell)\ :\ B_\beta(z_1,\ldots,z_{\ell})\pi\ \text{is upper-triangular}\right\}\sse \bC^{\ell},
$$
where the matrix $B_{\beta}(z_1,\ldots,z_{\ell})\in \GL(n,\bC[z_1,\ldots,z_\ell])$ is defined to be the matrix product
$$
B_{\beta}(z_1,\ldots,z_{\ell}):=B_{i_1}(z_1)\cdots B_{i_\ell}(z_{\ell}),
$$
and the braid matrices $B_i(z)\in \GL(n,\bC[z])$ are defined by:
\[
(B_i(z))_{jk} := \begin{cases} 1 & j=k \text{ and } j\neq i,i+1 \\
1 & (j,k) = (i,i+1) \text{ or } (i+1,i) \\
z & j=k=i+1 \\
0 & \text{otherwise;}
\end{cases},\quad\mbox{i.e. }
B_i(z):=\left(\begin{matrix}
1 & \cdots  & & & \cdots & 0\\
\vdots & \ddots & & & & \vdots\\
0 & \cdots & 0 & 1 & \cdots & 0\\
0 & \cdots & 1 & z & \cdots & 0\\
\vdots &  & & &\ddots & \vdots\\
0 & \cdots & & & \cdots & 1\\
\end{matrix}\right).
\]
Here the only non-trivial $(2\times 2)$-block of $B_i(z)$ is at the $i$th and $(i+1)$st rows.\hfill$\Box$
\end{definition}

Braid varieties were introduced in \cite{CGGS,Mellit}, where part of their geometry was studied. In particular, we proved in \cite{CGGS} that $X(\beta_1;\pi)\cong X(\beta_2;\pi)$ if 
$\beta_1$ and $\beta_2$ are related by Reidemeister III moves or braid commutation; hence the name {\it braid} varieties. In this article, the permutation (matrix) $\pi$ will often be $\pi=w_{0,n}=[n, n-1, \ldots, 1]\in S_n$, and we sometimes abbreviate $X(\beta)$ for $X(\beta;w_{0,n})$.


\subsection{Richardson and Positroid varieties} We introduce Richardson and positroid varieties, following \cite{KLS}, see also \cite{SpeyerSurvey}. Let us consider the flag variety $\Fl_{n}$ associated to the group $G := \GL(n,\bC)$. That is, $\Fl_{n} = G/B$ where $B$ is the Borel subgroup of upper triangular matrices. We denote by $\cF^{A}$ the flag associated to a matrix $A \in G$. Namely, the $i$-th space in $\C^n$ for the flag $\cF^{A}$ is spanned by the first $i$ columns of the matrix $A$. Let us denote by $\cF^{\std} := (0 \subseteq \langle e_{1} \rangle \subseteq \langle e_{1}, e_{2}\rangle \subseteq \cdots \subseteq \langle e_{1}, \dots, e_{n-1}) \subseteq \bC^{n})$ the standard flag, and by $\cF^{\ant} := (0 \subseteq \langle e_{n}\rangle \subseteq \langle e_{n}, e_{n-1}\rangle \subseteq \cdots \subseteq \langle e_{n}, \dots, e_{2} \rangle \subseteq \bC^n)$ the anti-standard flag. Note that we have an action of $G$ on $G/B = \Fl_n$ by left multiplication. By restricting, we also have an action of $B$ on $\Fl_n$. Similarly, if $B_{-}$ is the opposite Borel subgroup of lower triangular matrices, we also have an action of $B_{-}$ on $\Fl_n$ by left multiplication.

Let $w \in S_n$ be a permutation. The \emph{Schubert cell} $X^{\circ}_{w}$ associated to $w\in S_n$ is defined to be the $B$-orbit of the flag $wB/B$, where $w$ is understood as a permutation matrix:
$$
\schub_{w} := BwB/B =  \{\cF \in \Fl_{n} \mid \dim(F^{\std}_{p} \cap F_{q}) = \#\{i \leq q \mid w(i) \leq p\} \; \text{for all} \; p, q\in[1,n]\}.
$$

The \emph{opposite Schubert cell} of $w \in S_n$ is defined to be the $B_{-}$-orbit of the flag $wB/B$:
\[
\schub\!^{w} := B_{-}wB/B.
\]
Similarly to $\schub_{w}$, the opposite Schubert cell $\schub\!^{w}$ admits a explicit description as follows. Let $w_{0}$ be the longest element of $S_{n}$. Then:
$$
\schub\!^{w} = \{\cF \in \Fl_{n} \mid \dim(F_{p}\cap F_{q}^{\ant}) = \#\{i \leq q \mid w_{0}w(i) \leq p\} \; \text{for all} \; p, q = 1, \dots, n\}.
$$

\begin{definition}
Let $u, w \in S_n$ be two permutations. The \emph{open Richardson variety} $\rich{w}{u} \subseteq \Fl_{n}$ is the intersection:
\[
\rich{w}{u} := \schub_{w} \cap \schub\!^{u}.
\]
\end{definition}

\noindent It is known that $\rich{w}{u}$ is nonempty if and only if $u \leq w$ in the Bruhat order, cf.~\cite{SpeyerSurvey}, in which case it is a smooth affine variety of dimension $\ell(w) - \ell(u)$. In particular, $\rich{w}{w}$ is a single point.

Now fix $k \leq n$ and consider the Grassmannian $\Gr(k,n)$ of $k$-planes in $\C^{n}$. In fact, $\Gr(k,n) = G/P$, where $P$ is the subgroup of $G = \GL(n,\C)$ consisting of block-upper-triangular matrices with blocks of sizes $k$ and $n-k$. We have the associated subgroup $W_P = S_k \times S_{n-k} \subseteq S_n$. Again, we have an action of $B$ on $\Gr(n,k)$ and, given a permutation $w \in S_n$, we have an associated Schubert cell in the Grassmannian:
\[
C_{w} := BwP/P \subseteq \Gr(k,n).
\]
In fact, $C_{w} = C_{w'}$ if the cosets $wW_{P}$ and $w'W_{P}$ coincide.

Note that we have a natural projection $\pi: \Fl_n \to \Gr(k,n)$. In terms of flags, $\pi(\cF) = \cF_{k}$ is the $k$-th subspace of the flag. This map is $B$-equivariant and thus $\pi(\schub_{w}) \subseteq C_{w}$. The following result underscores the importance of $k$-Grassmannian permutations. 

\begin{lemma}[Section 5, \cite{KLS}]\label{lem: projection schubert}
The map $\pi|_{\schub_{w}}: \,\, \schub_{w} \to C_w$ is an isomorphism of algebraic varieties if and only if $w$ is a $k$-Grassmannian permutation. 
\end{lemma}

In particular, if $w$ is a $k$-Grassmannian permutation and $u \leq w$ then we have:
\[
\Fl_{n} \supseteq \rich{w}{u} \cong \pi(\rich{w}{u}) \subseteq \Gr(k,n).
\]

\begin{definition}
    Let $u, w \in S_n$, with $w$ a $k$-Grassmannian permutation and $u \leq w$. The \emph{open positroid variety} $\Pi_{u, w} \subseteq \Gr(k,n)$ is:
    \[
    \Pi_{u, w} := \pi(\rich{w}{u}). 
    \]
\end{definition}

By definition, the positroid variety $\Pi_{u, w}$ is isomorphic to the Richardson variety $\rich{w}{u}$. Since positroids can be defined inside the Grassmannian, they are specially nice amongst other Richardson varieties, see e.g. \cite{GL, KLS, KLS2, SpeyerSurvey}.

\subsection{Open Richardson varieties as braid varieties}\label{ssec:Richardson_braidvar}

The following result is the main theorem of this subsection. It states that open Richardson varieties can be described as braid varieties.

\begin{thm}\label{thm:richardson}
	Let $\beta_{1},\beta_{2}\in\cB^+_n$ be two reduced positive braid words, and $w_1,w_2\in S_n$ be their Coxeter projections. 

 \begin{itemize}
	\item[(a)] The map $$\iota:\bC^{\ell(\beta_1)}\times\bC^{\ell(\beta_2)}\lr\Fl_n,\qquad (z_1,z_2) \mapsto \cF^{B_{\beta_1}^{-1}(z_1)},\quad (z_1,z_2)\in\bC^{\ell(\beta_1)}\times\bC^{\ell(\beta_2)},$$
	
	\noindent restricts to an isomorphism
	$$X(\beta_1\beta_2; w_0) \lr \iota(X(\beta_1\beta_2; w_0))\cong\schub_{w_{1}^{-1}} \cap \schub\!^{w_2w_0}$$
	of affine algebraic varieties.\\
 
 \item[(b)] The map
 \[
 \jmath: \bC^{\ell(\beta_1)} \times \bC^{\ell(\beta_2)} \lr \Fl_n, \qquad (z_1, z_2) \mapsto \cF^{w_0B_{\beta_1}^{-1}(z_1)}, \quad (z_1, z_2) \in \bC^{\ell(\beta_1)} \times \bC^{\ell(\beta_2)}
 \]
 restricts to an isomorphism
  \[
 X(\beta_1\beta_2; w_0) \lr \jmath(X(\beta_1\beta_2; w_0)) \cong \schub\!^{w_0w_1^{-1}}\cap \schub_{w_2^*}
 \]
 of affine algebraic varieties. 
 \end{itemize}
\end{thm}

\begin{cor} \label{cor:richardson}
	Let $u, w \in S_{n}$ be such that $u \leq w$ in Bruhat order, and $\beta(w),\beta(u^{-1}w_0), \beta(w^*)\in\Br_n$ positive lifts of $w,u^{-1}w_0, w^*$. Then restrictions give the following isomorphisms of affine algebraic varieties
 \[
	     \iota: X(\beta(w)\beta(u^{-1}w_0); w_{0}) \overset\sim\to \; \rich{w^{-1}}{u^{-1}}, \qquad 
 \jmath: X(\beta(u^{-1}w_0)\beta(w^*); w_0) \overset\sim\to \; \rich{w}{u}.
	\]
\end{cor}

\noindent Theorem \ref{thm:richardson}, through Corollary \ref{cor:richardson}, implies Theorem \ref{thm:rich vs juggling intro}, as discussed in Subsection \ref{sect:rich vs juggling} below. Subsections \ref{sssec:lemmas_rich} and \ref{sssec:thm_rich} now prove Theorem \ref{thm:richardson}.

\begin{remark}
\label{rem:different isomorphisms}
We note that interpreting open Richardson varieties as braid varieties allows for a clear description of certain isomorphisms between them.
In particular, 
the composition $\iota \circ \jmath^{-1}$ associated with $\beta_1 = \beta(u^{-1}w_0), \beta_2 = \beta(w^*)$ gives an isomorphism
\[
\rich{w}{u} \overset\sim\to \rich{w_0 u}{w_0 w}.
\]
Also, we have an explicit isomorphism 
\[
\psi: X(\beta(w)\beta(u^{-1}w_0); w_{0}) \cong X(\beta(u^{-1}w_0)\beta(w^*); w_0)
\]
given by a ``cyclic rotation'' of the braid word. The description of $\psi$ and the proof that it is an isomorphism can be found in \cite[Lemma 3.10]{cgglss}. \footnote{The paper \cite{cgglss} uses different conventions to define braid varieties, but the same arguments apply in our setting.} 
The composition 
$\jmath \circ \psi \circ \iota^{-1}$, with $\iota, \jmath$ as in Corollary \ref{cor:richardson}, gives an isomorphism
\[
\rich{w^{-1}}{u^{-1}} \overset\sim\to \rich{w}{u}.
\]

In fact, the existence of these isomorphisms implies that we could have chosen slightly different braids throughout the paper so that their varieties would be isomorphic to open Richardson and positroid varieties, at the cost of changing such isomorphisms. Namely, our positive Richardson braid is $R^+_n(u, w) = \beta(u^{-1}w_0)\beta(w^*)$, and we use the isomorphism $\jmath$ from Corollary \ref{cor:richardson} to realize open Richardson varieties as braid varieties. Instead, we could have used the braid $\beta(w)\beta(u^{-1}w_0)$ and its braid variety throughout the paper and applied the isomorphism $\jmath \circ \psi$ to realize open Richardson varieties as braid varieties. All the other positroid braids would have changed accordingly.\hfill$\Box$
\end{remark}

\subsubsection{Technical lemmas for Theorem \ref{thm:richardson}}\label{sssec:lemmas_rich} The open Schubert cell $\schub_{w}$ is isomorphic to an affine space of dimension $\ell(w)$, the length of $w$, which can be described as follows. Consider $w$ as a permutation matrix and let $E_{lm}$ be the $(lm)$-elementary matrix, so that the $(s,t)$ entry of $E_{lm}$ is the product $(\delta_{l,s})(\delta_{m,t})$ of Kronecker deltas. Inside the space of $(n\times n)$-matrices $M_n(\bC)\cong\bC^{n^2}$, consider the unique affine subspace $M_w$ which contains $w\in M_n(\bC)$ and is spanned by all the matrices $E_{w(j),i}$ such that the pair $(i, j)$ satisfies $(i, j) \in \inv(w)$.\footnote{Recall that an \emph{inversion} of $w$ is a pair $(i, j)$ where $i < j$ and $w(i) > w(j)$, and $\inv(w)$ denotes the set of inversions of $w$; note that $\ell(w) = \#\inv(w)$.} It can be proven, see e.g. \cite[Proposition 1.12]{SpeyerSurvey}, that the map
$$\iota:\GL(n,\bC)\lr \Fl_n,\quad A \mapsto \cF^{A},$$
restricts to an isomorphism between $M_{w}$ and the Schubert cell $\schub_{w}$, i.e. $\iota(M_{w})\cong\schub_{w}$. Let us now relate this to braid matrices. Indeed, braid matrices serve as a parametrization of the affine spaces $M_{w}$, and thus of the corresponding Schubert cells. This is the content of the following lemma, for a proof see e.g. \cite[Proposition 5.1.5]{Mellit} and \cite[Section 2]{CGGS}.

\begin{lemma}\label{lemma:cells1}
	Let $w \in S_{n}$ be a permutation and $\beta = \sigma_{i_1}\cdots \sigma_{i_{\ell}}\in\cB_n$ a choice of reduced positive lift for $w$. Then the map
	$$\bC^{\ell(w)} \to M_{w}, \qquad (z_1, \dots, z_{\ell}) \mapsto B_{i_1}^{-1}(z_1)\cdots B_{i_{\ell}}^{-1}(z_{\ell})
	$$
	\noindent is an isomorphism of affine algebraic varieties.\hfill$\Box$
\end{lemma}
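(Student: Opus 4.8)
\textbf{Proof plan for Lemma \ref{lemma:cells1}.}

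The plan is to prove the statement by induction on the length $\ell = \ell(w)$. The base case $\ell = 0$ is trivial: the empty braid word gives the identity matrix, which is the unique point of $M_e = \{e\}$. For the inductive step, I would write $\beta = \sigma_{i_1}\beta'$ where $\beta' = \sigma_{i_2}\cdots\sigma_{i_\ell}$ is a reduced lift of the permutation $w' = s_{i_1}w$, which satisfies $\ell(w') = \ell(w) - 1$ since the word is reduced. The key observation is that $B_{i_1}^{-1}(z_1) = B_{i_1}(-z_1)$ (since $B_i(z)B_i(-z) = \mathrm{Id}$, which is immediate from the $2\times 2$ block structure), so left-multiplication by $B_{i_1}^{-1}(z_1)$ performs an elementary row operation depending affinely on $z_1$. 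First I would record the precise effect of this operation on a matrix: it swaps rows $i_1$ and $i_1+1$ and adds a $z_1$-multiple of one row to the other. Then, by the inductive hypothesis, the image of $(z_2,\dots,z_\ell)\mapsto B_{i_2}^{-1}(z_2)\cdots B_{i_\ell}^{-1}(z_\ell)$ is exactly the affine subspace $M_{w'}$ spanned by the $E_{w'(j),i}$ with $(i,j)\in\inv(w')$, based at the permutation matrix $w'$.

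The heart of the argument is then the linear-algebra verification that $B_{i_1}^{-1}(z_1)\cdot M_{w'} = M_w$ as $z_1$ and the coordinates of $M_{w'}$ vary, and that the resulting parametrization is an isomorphism onto $M_w$. Since $s_{i_1}w = w'$ with $\ell(w') < \ell(w)$, we have $w^{-1}(i_1) < w^{-1}(i_1+1)$; equivalently the permutation matrix $B_{i_1}(0)w' = s_{i_1}w' = w$, so the base point is correctly transported. For the spanning directions, I would check that conjugating (resp.\ multiplying) the basis vectors $E_{w'(j),i}$ by the elementary operation, together with the one new free parameter $z_1$, produces precisely a basis of the span of $\{E_{w(j),i} : (i,j)\in\inv(w)\}$. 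This uses the standard combinatorial fact that $\inv(w) = s_{i_1}\cdot\inv(w') \sqcup \{(\text{one new inversion})\}$ under the action permuting the values $i_1 \leftrightarrow i_1+1$, together with the fact that left multiplication by $B_{i_1}^{-1}(z_1)$ acts on the row index by $s_{i_1}$ while introducing exactly one additional affine parameter. Dimension count ($\dim M_w = \ell(w) = \ell(w')+1$) then forces the map to be an isomorphism provided it is injective, and injectivity follows because the row operations are invertible and the inductive parametrization is injective.

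The main obstacle I anticipate is bookkeeping: one must track carefully how the elementary row operation permutes and shears the elementary-matrix directions $E_{w'(j),i}$, and confirm that the "new" direction contributed by varying $z_1$ is genuinely $E_{w(j_0),i_0}$ for the newly created inversion $(i_0,j_0)$ and is not already in the span of the transported old directions. I would handle this by unwinding the definition of $M_w$ in coordinates and comparing entry-by-entry, possibly organizing the check around the two rows $i_1$, $i_1+1$ (where all the action happens) versus the untouched rows. An alternative, cleaner route — which I would mention as a remark — is to instead verify directly that $\iota\circ(\text{the stated map})$ lands in the Schubert cell $\schub_{w}$ by checking the rank conditions $\dim(F^{\std}_p \cap F_q)$ on the flag of the partial products, using that $B_{i_1}^{-1}(z_1)\cdots B_{i_t}^{-1}(z_t)$ is a reduced lift of the truncated permutation; this reduces the whole lemma to the well-known Bott--Samelson / Deodhar parametrization of Schubert cells and the already-established isomorphism $\iota(M_{w^{-1}})\cong\schub_w$, citing \cite{Mellit,CGGS}.
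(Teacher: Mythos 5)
The paper does not prove this lemma in-house --- it cites \cite[Section 5]{Mellit} and \cite[Section 2]{CGGS} --- so your direct induction is a genuinely self-contained alternative, and your closing remark correctly identifies the route the paper actually takes. The inductive plan works: writing $B_{i_1}^{-1}(z_1)=u_{i_1}(-z_1)s_{i_1}$ with $u_i(c)$ the elementary unipotent with block $\left(\begin{smallmatrix}1&c\\0&1\end{smallmatrix}\right)$, the factor $s_{i_1}$ carries the base point $w'$ to $w$ and each basis direction $E_{w'(j),i}$ to $E_{w(j),i}$ (same position pair, permuted row label), while $u_{i_1}(-z_1)$ introduces the single new direction $E_{w(j_0),i_0}$ for the created inversion $(i_0,j_0)=(w^{-1}(i_1+1),w^{-1}(i_1))$. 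The resulting change of coordinates on $M_w$ is in fact unitriangular, so the isomorphism statement drops out directly, without a separate dimension/injectivity step.

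There is one genuine error, though: $B_i(z)^{-1}\neq B_i(-z)$. The $2\times 2$ block of $B_i(z)$ is $\left(\begin{smallmatrix}0&1\\1&z\end{smallmatrix}\right)$, whose inverse is $\left(\begin{smallmatrix}-z&1\\1&0\end{smallmatrix}\right)$, while $B_i(-z)$ has block $\left(\begin{smallmatrix}0&1\\1&-z\end{smallmatrix}\right)$; indeed $B_i(z)B_i(-z)$ has block $\left(\begin{smallmatrix}1&-z\\z&1-z^2\end{smallmatrix}\right)\neq I$ for $z\neq 0$. Structurally, $B_i(z)=s_iu_i(z)$, so $B_i(z)^{-1}=u_i(-z)s_i$, which differs from $s_iu_i(-z)=B_i(-z)$ since $s_i$ and $u_i$ do not commute. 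The slip does not propagate --- your verbal description of the effect as a swap-plus-shear remains correct --- but you must use the right block when carrying out the entry-by-entry verification. Also watch the phrase ``$\inv(w)=s_{i_1}\cdot\inv(w')\sqcup\{\ldots\}$'': as sets of \emph{position} pairs one has literally $\inv(w)=\inv(w')\sqcup\{(i_0,j_0)\}$ with no $s_{i_1}$-action on the pairs themselves; only the row labels $w(j)=s_{i_1}(w'(j))$ of the spanning matrices $E_{w(j),i}$ transform by $s_{i_1}$.
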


\noindent The opposite open Schubert cell $\schub\!^{w}$ is also an affine space, as we now show.

\begin{lemma}\label{lemma:cells2}
Let $w \in S_n$. The opposite Schubert cell $\schub\!^{w}$ is an affine space of dimension $\ell(ww_0) = \ell(w_0) - \ell(w)$. Moreover, if $\beta = \sigma_{j_1}\cdots \sigma_{j_s}$ is a reduced positive lift of $ww_0$ then the map
\[
\C^{\ell(ww_0)} \to \schub\!^{w}, \qquad (t_1, \dots, t_s) \mapsto \cF^{B_{\beta}(t)w_0}
\]
is an isomorphism of affine algebraic varieties.
\end{lemma}
\begin{proof}
This follows from Lemma \ref{lemma:cells1}, as follows. By definition,
\[
\schub\!^{w} = B_{-}wB/B = (w_{0}Bw_{0})wB/B = w_0\!\schub_{w_0w}
\]
so that $\schub\!^{w}$ is an affine space of dimension $\ell(w_0w) = \ell(ww_0)$. Now, $w_0w = (ww_0)^*$, and thus the word $\sigma_{i_1}\cdots\sigma_{i_s}$, where 
$\sigma_{i_k} = \sigma_{j_k}^* = \sigma_{n - j_k}$, for $1 \leq k \leq s$, is a reduced braid word for $\beta(w_0w)$.
Then, according to Lemma \ref{lemma:cells1}, we have a parametrization of $\schub\!^{w}$ by flags associated to matrices of the form:
\[
w_0B_{i_1}^{-1}(t_1)\cdots B_{i_s}^{-1}(t_s), \qquad (t_1, \dots, t_s) \in \C^{s}.
\]
It remains to notice that $B_{i}^{-1}(t) = w_0B_{n-i}(-t)w_0$, that 
is proved by a direct verification.
\end{proof}

\noindent Lemmas \ref{lemma:cells1} and \ref{lemma:cells2} suffice to prove Theorem \ref{thm:richardson}. That is, we now show that if $\beta = \beta_1\beta_2$, with $\beta_1$ and $\beta_2$ both being reduced words, then the braid variety of $\beta$ is isomorphic to the intersection of the corresponding Schubert and opposite Schubert cells.

\subsubsection{Proof of Theorem \ref{thm:richardson}}\label{sssec:thm_rich}
Let us first verify that, in fact, Statements (a) and (b) are equivalent. Indeed, let $A \in \GL(n, \C)$. Then,
\[
\cF^{A} \in \rich{w_1^{-1}}{w_2w_0} = ((w_0Bw_0)(w_2w_0)B/B)\cap(Bw_1^{-1}B/B) 
\]
if and only if
\[
\cF^{w_0A} \in (Bw_2^*B/B)\cap ((w_0Bw_0)(w_0w_1^{-1})B/B) = \rich{w_2^*}{w_0w_1^{-1}},
\]
and the equivalence of Statements (a) and (b) follows. 

Now we show (a), from which (b) will follow by the above equivalence. First, let us verify that the image $\iota(X(\beta_1\beta_2; w_0))$ is indeed in the required intersection, i.e. that for each $z_1 \in \mathbb{C}^{\ell(\beta_1)}$, the flag $\cF^{B_{\beta_1}^{-1}(z_1)}$ belongs to both  cells $\schub_{w_{1}^{-1}}$ and $\schub\!^{w_2w_0}$. By Lemma \ref{lemma:cells1}, and since $\ateb_1 = \beta((w_1)^{-1})$, the matrix $B^{-1}_{\beta_1}(z_1)$ belongs to the affine subspace $M_{w_{1}^{-1}}$, and thus $\cF^{B_{\beta_1}^{-1}(z_1)} \in \schub_{w_{1}^{-1}}$, as needed. For the inclusion $\cF^{B_{\beta_1}^{-1}(z_1)} \in \schub\!^{w_2w_0}$, we note that for each $z_2 \in \mathbb{C}^{l(\beta_2)}$ such that $(z_1, z_2) \in X(\beta_1 \beta_2, w_0)$, we have the identity
	$$
	B_{\beta_{1}}(z_1)B_{\beta_2}(z_2)w_{0} = U
	$$
	\noindent for some upper triangular matrix $U$. This implies that	
	$$
	B_{\beta_1}^{-1}(z_1) = B_{\beta_{2}}(z_2)w_{0}U^{-1},
	$$
	
	\noindent and, since $U^{-1}$ is upper triangular, we conclude that $\cF^{B_{\beta_1}^{-1}(z_1)} = \cF^{ B_{\beta_{2}}(z_2)w_{0}U^{-1}} = \cF^{B_{\beta_{2}}(z_2)w_{0}}$. Then Lemma \ref{lemma:cells2}, together with the observation that $\beta_2$ is a reduced braid word for $(w_2w_0)w_0$, shows that the flag $\cF^{B_{\beta_{2}}(z_2)w_0}$ belongs to $\schub\!^{w_2w_0}$, as needed.  

 Second, in order to show that $\iota$ restricts to a bijection, consider a flag $\cF \in \schub_{w_{1}^{-1}}\cap \schub\!^{w_2w_0}$. By Lemma \ref{lemma:cells1}, we can find a unique element  $z_1 \in \bC^{\ell_{1}}$ such that $\cF = \cF^{B_{\beta_1}^{-1}(z_1)}$. Finally, there exists a unique element $z_2\in \bC^{\ell_{2}}$ such that $(z_1,z_2) \in X(\beta_1\beta_2; w_0)$. Indeed, since $\cF \in \schub\!^{w_2w_0}$, Lemma \ref{lemma:cells2} implies that there exists a unique $z_2 \in \bC^{\ell_{2}}$ such that $\cF = \cF^{B_{\beta_{2}}(z_2)w_{0}}$. So $B_{\beta_1}^{-1}(z_1) = B_{\beta_2}(z_2)w_0U$ for an upper triangular matrix $U$ and the result follows. \hfill$\Box$

\subsubsection{Proof of Theorem \ref{thm:rich vs juggling intro}}\label{sect:rich vs juggling} We are now in position to prove Theorem \ref{thm:rich vs juggling intro} from the introduction. Recall that a positroid pair $(u,w)$ yields the open positroid variety $\Pi_{u, w} \subseteq \Gr(k,n)$ in the Grassmannian. By \cite[Theorem 5.9]{KLS}, there exists an algebraic isomorphism $\Pi_{u, w} \cong\, \rich{w}{u}$ between the positroid stratum for $(u,v)$ and the Richardson variety $\rich{w}{u}$. By Corollary \ref{cor:richardson}, we have the algebraic isomorphism 
$\rich{w}{u} \cong X(\beta(u^{-1}w_0)\beta(w^*))$ given by the restriction of $\jmath$,
where, as usual, $\beta(u^{-1}w_0)$, 
$\beta(w^*)$
are positive braid lifts of their corresponding arguments. The composition of these two isomorphisms proves Part (i).\\

For Part (ii), we use Section \ref{sec:Legendrian}. In particular, that both $X(R_n^+(u,w))$ and $X(J_{k}(f); w_{0,k}) \times (\mathbb{C}^{*})^{d}$ are Legendrian invariants associated to the Legendrian links $\La(u,w)$ and $\La(f)$, respectively. For that, we use \cite[Section 5.1]{CasalsNg}, which explicitly describes the Legendrian contact dg-algebra $A_{\Lambda(\beta)}$ for a Legendrian $(-1)$-closure $\Lambda(\beta)$ in terms of braid matrices. It follows from \emph{loc.~cit.}, similarly to \cite[Section 2.6]{CGGS}, that there is an algebraic isomorphism $X(\beta)\cong\mbox{Spec } H^0(A_{\Lambda(\beta\Delta)})$, where we choose one marked point per strand. By \cite[Theorem 3.4]{Chekanov}, the quasi-isomorphism type of $A_{\Lambda(\beta)}$ is a Legendrian isotopy invariant. Therefore $X(\beta)\cong X(\beta')$ if $\La(\beta)$ is Legendrian isotopic to $\La(\beta')$.

\noindent Let $\overline{\La(f)}\sse(\R^3,\xi_{st})$ be the Legendrian link given by the unlinked union of $\La(f)$ with $m$ unlinked max-tb Legendrian unknots, each of the unknots with one unique marked point. Then
$$\mbox{Spec } H^0(A_{\overline{\La(f)}})\cong \mbox{Spec } H^0(A_{\La(f)})\times (\C^*)^{m}.$$
Indeed, $H^0(A_{\Lambda(e)})\cong \C[t,t^{-1}]$ for the 1-stranded braid $e$ with one marked point $t\in\C^*$, which corresponds to a max-tb Legendrian unknot with one marked point, and $\mbox{Spec } H^0(A_{\La_1\cup\La_2})\cong \mbox{Spec } H^0(A_{\La_1})\times \mbox{Spec } H^0(A_{\La_2})$ is a Cartesian product if the components $\La_1,\La_2$ of the link $\La_1\cup\La_2$  are unlinked from each other. 
By Theorem \ref{thm:Leg_allsame}, $\La(u,w)$ is Legendrian isotopic to $\overline{\La(f)}\sse(\R^3,\xi_{st})$ for some $m\in\N$. The number $m$ is determined by the number of destabilizations 
as in Lemma \ref{lem:Markov}.(2) in Subsection \ref{ssec:MarkovMoves}, equivalently the number of destabilizations using Lemma \ref{lem:Markovsmooth}.(2) in Subsection \ref{ssec:destabilization}. Following the proof of Theorem \ref{thm:Richardson_to_juggling} gives $m=n-k-\varphi$, where $\varphi$ is the number of fixed points of $f$. Therefore, we obtain  the following sequence of isomorphisms:
\begin{align*}
X(R_n^+(u,w))&\cong\mbox{Spec } H^0(A_{\La(u,w)})\cong\mbox{Spec } H^0(A_{\overline{\La(f)}})\cong\\
&\cong\mbox{Spec } H^0(A_{\La(f)})\times (\C^*)^{n-k-\varphi}
\cong X(J_{k}(f)) \times (\mathbb{C}^{*})^{n-k-\varphi}.
\end{align*}
This proves Part (ii) and thus finishes the argument for Theorem \ref{thm:rich vs juggling intro}.\hfill$\Box$

\color{black}


\subsection{Brick manifolds and compactifications of braid varieties}\label{ssec:compactify}
This subsection discusses smooth compactifications of braid varieties. Let $\beta=\sigma_{i_{1}}\cdots \sigma_{i_{\ell}}\in\cB_n^+$ be a positive braid word. We first define the brick manifold $\brick(\beta)$ associated to $\beta$, following \cite{Escobar}. These brick varieties $\brick(\beta)$ will provide natural smooth compactifications of our braid varieties $X(\beta)$.

By definition, the Bott-Samelson variety $\BS(\beta)$ associated to $\beta$ is the moduli space of collections of flags $(\cF^{0}, \dots, \cF^{\ell})\in\Fl_n$ such that $\cF^{0}$ is the standard flag and either $\cF^{j} = \cF^{j+1}$, or the two contiguous flags $\cF^{j},\cF^{j+1}$ differ precisely in the $i_{j+1}$-subspace. This projective variety $\BS(\beta)$ contains a natural subvariety, called the open Bott-Samelson variety $\OBS(\beta)$ in \cite{CGGS}, defined by the additional condition that two contiguous flags must be different, i.e. $\cF^{j} \neq \cF^{j+1}$ for every $j\in[0,\ell - 1]$.

\noindent The Bott-Samelson variety $\BS(\beta)$ admits a natural projection map
$$m_{\beta}: \BS(\beta) \lr \Fl_n,\quad m_{\beta}(\cF^{0}, \dots, \cF^{\ell}):= \cF^{\ell},$$
onto the last, rightmost, flag.

\begin{definition}
Let $\beta\in\cB_n^+$ be a positive braid word. By definition, the {\it brick variety} $\brick(\beta)$ associated to $\beta$ is
$$\brick(\beta) := m_{\beta}^{-1}(\delta(\beta)\cF^{\std}),$$
where $\delta(\beta)\in S_n$ denotes the Demazure product of $\beta$. The associated {\it open brick variety} is defined as
$$\brick^{\circ}(\beta) := m_{\beta}^{-1}(\delta(\beta)\cF^{\std})\cap \OBS(\beta).$$
\hfill$\Box$
\end{definition}

\noindent 
Here the Demazure product $\delta(\beta)\in S_n$ is the (unique) maximal permutation with respect to the Bruhat order such that $\beta$ contains its positive braid lift. The brick manifold $\brick(\beta)$, unlike the braid variety $X(\beta)$, significantly depends on the braid word $\beta\in\cB$, and not only on the braid $[\beta]\in\Br$. 

\noindent Let us prove Theorem \ref{thm: intro brick} in the introduction. Given $\beta = \sigma_{i_{1}}\cdots\sigma_{i_{\ell}}\in\cB_n$, denote its opposite braid word by $\ateb:= \sigma_{i_{\ell}}\cdots \sigma_{i_{1}}$. Braid varieties relate to brick varieties, up to this mirroring, as follows:

\begin{thm}\label{prop:brick} Let $\beta = \sigma_{i_{1}}\cdots\sigma_{i_{\ell}}\in\cB_n$ be a positive braid word, and consider the truncations $\ateb_{j} := \sigma_{i_{l}}\cdots \sigma_{i_{l-j+1}}$, $j\in[1,\ell]$.
The following holds:
	
	\begin{itemize}
		\item[$(i)$] The algebraic map
		$$\Theta:\bC^{\ell}\lr\Fl_n^{\ell+1},\quad (z_1, \dots, z_{\ell}) \mapsto (\cF^{\std}, \cF^{1}, \dots, \cF^{\ell}),$$
		where $\cF^{j}$ is the flag associated to the matrix $B_{\ateb_{j}}^{-1}(z_{\ell - j + 1}, \dots, z_{\ell})$, restricts to an isomorphism
		
		$$\Theta:	X(\ateb; \delta(\beta)) \stackrel{\cong}{\lr} \brick^{\circ}(\beta),$$
		of affine varieties. In particular, the braid variety $X(\ateb; \delta(\beta))$ is smooth.\\
		
		\item[$(ii)$] 
The complement to $X(\ateb; \delta(\beta))$ in $\brick(\beta)$ is a normal crossing divisor. Its components correspond to all possible ways to remove a letter from $\beta$ while preserving its Demazure product.
	\end{itemize}
\end{thm}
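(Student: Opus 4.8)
The plan is to build the map $\Theta$ explicitly, identify its image with $\brick^\circ(\beta)$ via the parametrization of Schubert cells by braid matrices (Lemmas \ref{lemma:cells1} and \ref{lemma:cells2}), and then analyze the boundary by a Bott--Samelson degeneration argument. First I would unwind the definitions: a point of $\BS(\beta)$ is a chain $(\cF^0,\dots,\cF^\ell)$ with $\cF^0 = \cF^{\std}$ and $\cF^j, \cF^{j+1}$ either equal or differing in the $i_{j+1}$-th subspace. Given $(z_1,\dots,z_\ell)\in\bC^\ell$, the flags $\cF^j := \cF^{B^{-1}_{\ateb_j}(z_{\ell-j+1},\dots,z_\ell)}$ manifestly form such a chain, because multiplying by one more factor $B_{i}^{-1}(z)$ changes only the $i$-th subspace of the flag (this is the content of how the braid matrices $B_i(z)$ act, and is the same bookkeeping used in \cite{CGGS} and in Lemma \ref{lemma:cells1}). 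The condition $\cF^j\neq\cF^{j+1}$ for all $j$ is exactly the statement that the chain lies in $\OBS(\beta)$; I would check that this open condition is automatically equivalent to the $z$'s being unconstrained in the relevant coordinates, i.e.\ that $\Theta$ lands in $\OBS(\beta)$ on all of $\bC^\ell$. Then $m_\beta\circ\Theta(z_1,\dots,z_\ell) = \cF^\ell = \cF^{B^{-1}_{\ateb}(z_1,\dots,z_\ell)}$, and by Lemma \ref{lemma:cells2} (applied to $\ateb$) together with the defining equations of $X(\ateb;\delta(\beta))$, the condition $\cF^\ell = \delta(\beta)\cF^{\std}$ is precisely the condition that $B_{\ateb}(z_1,\dots,z_\ell)\delta(\beta)$ be upper-triangular, i.e.\ $(z_1,\dots,z_\ell)\in X(\ateb;\delta(\beta))$. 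This gives a morphism $\Theta: X(\ateb;\delta(\beta))\to\brick^\circ(\beta)$.

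For bijectivity of part $(i)$: injectivity is clear since the last flag $\cF^\ell$ together with $\cF^{\std}$ already determines $B^{-1}_{\ateb}(z)$ uniquely inside the Schubert cell $\schub_{\delta(\beta)}$ (Lemma \ref{lemma:cells1}), hence determines all $z_j$. For surjectivity, I would argue inductively on $\ell$: given a chain in $\brick^\circ(\beta)$, the flag $\cF^1$ differs from $\cF^{\std}$ in the $i_\ell$-th spot (using that $\ateb$ starts with $\sigma_{i_\ell}$), so there is a unique $z_\ell$ realizing it as $\cF^{B^{-1}_{i_\ell}(z_\ell)}$, and one continues. One must check that the Demazure product of the truncation drops or stays at each stage compatibly, which is where the hypothesis that the whole chain maps to $\delta(\beta)\cF^{\std}$ is used — at a repeated letter of $\beta$ the Demazure product does not grow, and the open condition $\cF^j\neq\cF^{j+1}$ combined with landing on $\delta(\beta)\cF^{\std}$ forces the "correct" choice of subspace at each step. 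That $\Theta$ is an isomorphism of varieties (not just a bijection) follows because both sides are smooth — $\brick(\beta)$ by \cite[Theorem 3.3]{Escobar}, and then $\brick^\circ(\beta)$ as an open subset — and a bijective morphism between smooth varieties in characteristic zero is an isomorphism; alternatively $\Theta$ has an algebraic inverse given by reading off the $z_j$ from the flags as above.

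For part $(ii)$: the brick variety $\brick(\beta)$ is the preimage $m_\beta^{-1}(\delta(\beta)\cF^{\std})$ inside the full Bott--Samelson $\BS(\beta)$, and $\OBS(\beta)$ is the locus where all consecutive flags are distinct. Its complement in $\BS(\beta)$ is the union, over $j\in[1,\ell]$, of the divisors $D_j = \{\cF^{j-1}=\cF^j\}$; intersecting with $\brick(\beta)$ gives $\brick(\beta)\setminus\brick^\circ(\beta) = \bigcup_j (D_j\cap\brick(\beta))$. I would show each nonempty $D_j\cap\brick(\beta)$ is itself (isomorphic to) the brick variety of the braid word $\beta^{(j)}$ obtained by deleting the $j$-th letter $\sigma_{i_j}$, and that this deletion preserves the Demazure product exactly when $D_j\cap\brick(\beta)$ is a divisor rather than empty (if deletion lowers the Demazure product the fiber over $\delta(\beta)\cF^{\std}$ becomes empty). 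The normal-crossing property is local: near a point lying on $D_{j_1}\cap\dots\cap D_{j_r}$, the local coordinates on $\BS(\beta)$ coming from the $\bC$-charts at each crossing make each $D_{j_t}$ a coordinate hyperplane, so their intersection is transverse; one checks this intersects $\brick(\beta)$ transversally using smoothness of $\brick(\beta)$ together with the fact, via \cite{Escobar} and the subword-complex picture, that the stratification of $\brick(\beta)$ by the $D_j$'s is the stratification dual to the spherical subword complex of $(\beta,w_0)$ (or of $(\beta,\delta(\beta))$), whose strata meet like coordinate subspaces. The statement "components correspond to ways to remove a letter preserving the Demazure product" is then just the identification of the irreducible components of the complement with the nonempty $D_j\cap\brick(\beta)$.

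\textbf{Main obstacle.} The delicate point is part $(ii)$: establishing the normal-crossing condition rigorously, i.e.\ checking that the boundary divisors $D_j\cap\brick(\beta)$ and all their intersections are smooth and meet transversally inside $\brick(\beta)$. This requires either a careful local-coordinate computation on $\BS(\beta)$ — verifying that the natural coordinate charts exhibit the $D_j$ as coordinate hyperplanes and that $\brick(\beta)$ is cut out by equations transverse to all their intersections — or a clean appeal to the combinatorics of spherical subword complexes from \cite{Escobar} and the Knutson--Miller framework to transfer their known (shellable, hence "coordinate-like") structure to the geometry. Getting the bookkeeping of which letter-deletions preserve $\delta(\beta)$ to match exactly the nonempty boundary components, and doing so uniformly, is where most of the real work lies; the construction and bijectivity of $\Theta$ in part $(i)$ is comparatively routine given Lemmas \ref{lemma:cells1} and \ref{lemma:cells2}.
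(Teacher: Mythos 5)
Your overall scheme for part $(i)$ matches the paper, but your injectivity argument does not work: every point $(z_1,\dots,z_\ell)\in X(\ateb;\delta(\beta))$ satisfies $\cF^\ell=\cF^{B^{-1}_{\ateb}(z)}=\delta(\beta)\cF^{\std}$ by definition of the braid variety, so the last flag is \emph{constant} on $X(\ateb;\delta(\beta))$ and carries no information about the $z_j$. Moreover, Lemma \ref{lemma:cells1} only applies to a reduced word, and $\ateb$ need not be reduced; $(z_1,\dots,z_\ell)\mapsto B^{-1}_{\ateb}(z)$ is in general not injective. The fix is exactly the inductive reading you already invoke for surjectivity: $\cF^{j-1}$ and $\cF^j$ are two flags in relative position $i_{\ell-j+1}$, and Lemma \ref{lemma:easy} (the one-letter version of this dictionary) says the corresponding $z_{\ell-j+1}\in\bC$ is uniquely determined by that pair; iterating over $j$ recovers all coordinates and gives the algebraic inverse in one stroke, so there is no need to argue injectivity separately or to invoke smoothness plus bijective-implies-iso.

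For part $(ii)$ your decomposition into divisors $D_j\cap\brick(\beta)\cong\brick(\beta_{[1,\ell]\setminus\{j\}})$, nonempty exactly when deleting the $j$-th letter preserves the Demazure product, is the same one the paper uses, but the paper avoids the local-coordinate computation you flag as the main obstacle. The route taken instead is a dimension count: for $I\subseteq[1,\ell]$ with $\delta(\beta_I)=\delta(\beta)$, Escobar's \cite[Theorem 3.3]{Escobar} gives that $\brick(\beta_I)$ is smooth of dimension $|I|-\ell(\delta(\beta))$, i.e.\ of codimension $\ell-|I|$ in $\brick(\beta)$; since $\brick(\beta_I)=\bigcap_{j\notin I}\brick(\beta_{[1,\ell]\setminus\{j\}})$ is expressed as an intersection of exactly $\ell-|I|$ divisors and achieves that codimension, it is automatically a complete intersection, and the divisors meet transversally wherever the intersection is nonempty. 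This replaces the analysis of local charts on $\BS(\beta)$ by a uniform use of smoothness and dimension, which is substantially less delicate than what you sketch.
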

\begin{proof}
For Part (i), we first verify $\Theta(X(\ateb; \delta(\beta)))\sse\brick^{\circ}(\beta)$. For that, note that $$B_{\ateb_{j+1}}^{-1}(z_{\ell - j}, \dots, z_{\ell}) = B_{\ateb_{j}}^{-1}(z_{\ell - j + 1}, \dots, z_{\ell})B_{i_{j+1}}^{-1}(z_{\ell-j}),$$
and thus the two flags $\cF^{j}$ and $\cF^{j+1}$ are indeed in position $i_{j+1}$, as required. In order to check that
$$\cF^{B^{-1}_{\ateb}(z_1, \dots, z_{\ell})} = \delta(\beta)\cF^{\std},$$
we observe that we have $B_{\ateb}(z_1, \dots, z_{\ell})\delta(\beta) = U$, where $U$ is an upper triangular matrix, and hence $B^{-1}_{\ateb}(z_1, \dots, z_{\ell}) = \delta(\beta)U^{-1}$, from which this conclusion follows. The fact that the map $\Theta$ restricts to an isomorphism follows from the statement of Lemma \ref{lemma:easy} below. The smoothness claim follows from \cite[Theorem 3.3]{Escobar}.

\noindent For Part (ii), we proceed as follows. For a subset $I \subseteq [1, \ell]$, let $\brick(\beta)^{\circ}_{I} \subseteq \brick(\beta)$ be defined by the conditions that $\cF^{i-1} \neq \cF^{i}$ if and only if $i \in I$, where $\cF^{0} = \cF^{\std}$. For example, $\brick(\beta)^{\circ}_{[1, \ell]} = \brick(\beta)^{\circ}$. Now let $\brick(\beta)_{I} := \overline{\brick(\beta)_{I}^{\circ}}$, which is similarly defined by the condition that $\cF^{i-1} = \cF^{i}$ if $i \not\in I$. Note that $\brick(\beta)_{I} \subseteq \brick(\beta)_{J}$ if $I \subseteq J$, and that $\brick(\beta)_{I}^{\circ}$ is nonempty if and only if $\delta(\beta_{I}) = \delta(\beta)$, where $\beta_{I}$ is the subword of $\beta$ indexed by $I$. Moreover, in this case we have natural isomorphisms $$\brick(\beta_{I})^{\circ} \overset\sim\rightarrow \brick(\beta)_{I}^{\circ}, \qquad \brick(\beta_{I}) \overset\sim\rightarrow \brick(\beta)_{I}.$$
Now we have the following the composition
$$
\brick(\beta) = \brick(\beta)^{\circ} \sqcup \bigcup_{I \subsetneq [1,\ell]}\brick(\beta)_{I} = X(\ateb; \delta(\beta))\sqcup\bigcup_{\substack{I \subsetneq [1, \ell] \\ \delta(\beta_{I}) = \delta(\beta)}}\brick(\beta_{I}).
$$
If $\delta(\beta_{I}) = \delta(\beta)$, then $\brick(\beta_{I})$ is a smooth variety of dimension $|I| - \ell(\delta(\beta))$, see \cite[Theorem 3.3]{Escobar}. Therefore, in this case $$\brick(\beta_{I}) = \brick(\beta)_{I} = \bigcap_{j \not\in I}\brick(\beta)_{[1,\ell]\setminus\{j\}} = \bigcap_{j \not\in I}\brick(\beta_{[1,\ell]\setminus\{j\}}).$$
Hence $\brick(\beta_{I})$ is a complete intersection, and the divisors $\brick(\beta_{[1,\ell]\setminus\{j\}}) \subseteq \brick(\beta)$ intersect transversely, if the intersection is non-empty.
\end{proof}

\begin{lemma}\label{lemma:easy}
	Let us consider an invertible matrix $A \in \GL_{n}$ and $i\in[1,n-1]$. Then, the map
	$$\bC \lr \Fl,\quad z \longmapsto \cF^{AB^{-1}_{i}(z)},$$
	yields an isomorphism from $\bC$ to the set of all flags that are in position $i$ with respect to $\cF^{A}$.
\end{lemma}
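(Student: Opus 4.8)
The statement to prove is Lemma \ref{lemma:easy}: for $A \in \GL_n$ and $i \in [1,n-1]$, the map $z \mapsto \cF^{AB_i^{-1}(z)}$ is an isomorphism from $\bC$ onto the set of flags in position $i$ with respect to $\cF^A$.

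\textbf{Approach.} The plan is to reduce to the case $A = \mathrm{Id}$ by left-translation and then compute directly. First I would observe that multiplication on the left by $A \in \GL_n$ induces an automorphism of $\Fl_n$ (the usual $\GL_n$-action), and that this automorphism carries the flag $\cF^{B_i^{-1}(z)}$ to $\cF^{AB_i^{-1}(z)}$, and carries $\cF^{\std}$ to $\cF^A$. Moreover this automorphism preserves relative position of pairs of flags: $\cF, \cF'$ are in position $s_i$ if and only if $A\cF, A\cF'$ are. Hence it suffices to prove the statement for $A = \mathrm{Id}$, i.e.\ that $z \mapsto \cF^{B_i^{-1}(z)}$ is an isomorphism from $\bC$ onto the set of flags in position $i$ with $\cF^{\std}$.

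\textbf{Key computation.} For $A=\mathrm{Id}$ the matrix $B_i^{-1}(z)$ agrees with the identity outside the $2\times 2$ block in rows/columns $i, i+1$, where one has
$$
B_i(z) = \begin{pmatrix} 0 & 1 \\ 1 & z\end{pmatrix}, \qquad B_i(z)^{-1} = \begin{pmatrix} -z & 1 \\ 1 & 0 \end{pmatrix}.
$$
Thus the flag $\cF^{B_i^{-1}(z)}$ coincides with $\cF^{\std}$ in every subspace $F_j$ with $j \neq i$, while $F_i = \langle e_1, \dots, e_{i-1}, -z e_i + e_{i+1}\rangle$. A flag $\cF$ is in position $i$ with respect to $\cF^{\std}$ precisely when $\dim(F^{\std}_j \cap F_j) = j$ for $j \neq i$ and $\dim(F^{\std}_i \cap F_i) = i-1$; equivalently $F_j = F^{\std}_j$ for $j\neq i$, and $F_i$ is an $i$-dimensional subspace containing $F^{\std}_{i-1} = \langle e_1,\dots,e_{i-1}\rangle$ and contained in $F^{\std}_{i+1} = \langle e_1,\dots,e_{i+1}\rangle$, with $F_i \neq F^{\std}_i$. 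Such $F_i$ are parametrized by lines in the $2$-dimensional quotient $F^{\std}_{i+1}/F^{\std}_{i-1}$ other than the line spanned by the image of $e_i$; this is an affine line $\bA^1$, and the explicit coordinate is exactly $z \mapsto [-z e_i + e_{i+1}]$. This gives a bijection, and since both sides carry natural scheme structures (the target being a locally closed, indeed affine, subvariety of $\Fl_n$ cut out by the rank conditions) and the map and its inverse are given by regular functions, it is an isomorphism of varieties.

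\textbf{Main obstacle.} This lemma is genuinely routine — the only mild care needed is (a) being precise that left-translation by $A$ is an automorphism of the flag variety respecting relative positions, so that the reduction to $A = \mathrm{Id}$ is legitimate, and (b) checking that the bijection is in fact an isomorphism of schemes rather than merely a set-theoretic bijection, which follows because the set of flags in position $i$ is a smooth locally closed subvariety (an $\bA^1$-bundle over the point), and $z \mapsto \cF^{B_i^{-1}(z)}$ is a morphism whose inverse reads off the single varying coordinate $-z$ as a regular function. I would simply state these two points and the explicit $2\times 2$ computation above. No step presents a real difficulty; the content is entirely bookkeeping with the definition of Schubert/relative position and the explicit form of $B_i(z)^{-1}$.
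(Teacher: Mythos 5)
Your proof is correct, and it fills in exactly the kind of routine argument the authors had in mind: the paper itself omits a proof, remarking only that ``the proof of the following lemma is straightforward.'' The reduction to $A=\mathrm{Id}$ via the $\GL_n$-action on $\Fl_n$, the explicit $2\times2$ computation of $B_i(z)^{-1}$, and the identification of flags in position $s_i$ with lines in $F^{\std}_{i+1}/F^{\std}_{i-1}$ other than $[e_i]$, together with the remark that the parametrization and its inverse are regular, are precisely the expected ingredients. One small point worth making explicit if you write this up: since $-ze_i+e_{i+1}$ always has a nonzero $e_{i+1}$-component, the image flag is never equal to $\cF^{\std}$, which is what guarantees the image lands in the \emph{open} relative position $s_i$ (as opposed to position $\le s_i$), matching the use in the proof of the brick-manifold theorem.
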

\begin{proof}
	Let $\cF := \cF^{A}$, $\cF'$ another flag that is in position $i$ with respect to $\cF$ and consider the projection $\cF_{i}' \lr \cF_{i+1}/\cF_{i-1}$. The latter space has basis $\{a_{i}, a_{i+1}\}$, where $a_{i}, a_{i+1}$ are the $i$th and $(i+1)$st columns of the matrix $A$. Since the image is a one-dimensional subspace that cannot coincide with $\bC a_{i}$, it is of the form $\bC(a_{i+1}+za_{i})$ for a unique $z \in \bC$. The result follows.
\end{proof}

\begin{remark}
Note that Corollary \ref{cor:richardson}  also follows from Theorem \ref{prop:brick} by results of \cite{Escobar}, since an open Richardson variety is an open brick variety for the word considered in Corollary \ref{cor:richardson}. Resolutions of Richardson varieties via fibers of the Bott-Samelson map first appeared in \cite{Balan}, reformulating constructions in \cite{Brion}; the latter were also studied in \cite{KLS2}.\hfill$\Box$
\end{remark}

\begin{remark}
In \cite[Theorem 2.34]{CGGS} we compared braid varieties with the \emph{diagonal} open Bott-Samelson varieties. These are subvarieties of $\OBS(\beta)$ defined by an additional condition $\cF^0 = \cF^l$, but this flag (which is simultaneously the first and the last one) is not fixed. Open brick varieties are defined by a different condition, as one fixes both the first and the last flags. This difference explains the additional appearance of $G$ and $B$ in \cite[Theorem 2.34]{CGGS}.\footnote{Note that the Borel subgroup $B$ is denoted by $\mathcal{B}$ in \cite[Theorem 2.34]{CGGS}.}. 
\hfill$\Box$
\end{remark}

\noindent Since the brick manifold $\brick(\beta)$ depends on the braid word $\beta\in\cB$, and the braid variety $X(\beta)$ does not, Theorem \ref{prop:brick}.(ii) allows us to construct many different compactifications for a given braid variety. These compactification are all smooth and projective and the  compactifying divisor is a smooth normal crossing divisor.

\begin{example}
Let us consider the equivalent braid words
$$\beta_1=\sigma_1\sigma_2\sigma_1\sigma_2\sigma_1,\quad \beta_2=\sigma_1\sigma_2\sigma_2\sigma_1\sigma_2.$$
In both cases, the braid varieties are algebraic tori
$$X(\ateb_1;w_0)\cong X(\ateb_2;w_0)\cong (\bC^*)^2.$$
The variety $\brick(\beta_1)$ has $X(\ateb_1;w_0)$ as an open stratum, 
5 codimension-1 strata (all isomorphic to $\bC^*$), and 5 
codimension-2 strata, which are points. In fact, $\brick(\beta_1)$ is a toric degree 5 del Pezzo surface, i.e. the toric variety associated to the pentagon, and these various strata correspond to toric 
orbits. In contrast, $X(\sigma_1\sigma_2^3;w_0)$ is empty and there are only 4 codimension-1 strata and 4 codimension-2 strata in $\brick(\beta_2)$. In fact, $\brick(\beta_2)\cong\PP^1\times \PP^1$, which is a different toric variety.\hfill$\Box$
\end{example}

Theorem \ref{prop:brick}  also brings forth a connection between braid varieties and the subword complexes introduced in \cite{KM}. For that, note that the open brick variety $\brick^{\circ}(\beta)$ is the higher-dimensional stratum in the stratification of the brick variety $\brick(\beta)$ given in \cite[Theorem 24]{Escobar}, and we claim that all other strata of this stratification can also be realized as braid varieties, as follows.

Let $\beta'$ be a subword of $\beta$ such that the Demazure product of $\beta'$ coincides with that of $\beta$. Then, $X(\ateb'; \delta(\beta))$ is a strata of $\brick(\beta)$, given by the conditions that $\cF^{j} = \cF^{j+1}$ whenever $i_{j+1} \not\in \beta'$. This stratification is dual to the \emph{subword complex} $(\beta, \delta(\beta)),$ as defined in \cite{KM}. Subword complexes are defined for arbitrary pairs $(\beta, \pi),$ where $\pi$ is an element of a finite Coxeter group and $\beta$ is a word in simple generators; the latter can also be seen as a positive braid word in the corresponding braid group. In \cite{KM} it is proven that a subword complex is spherical if and only if $\delta(\beta) = \pi$. Thus, brick manifolds bijectively correspond to spherical subword complexes and they are stratified by braid varieties, with the adjacency of strata described by the dual complexes.

\begin{example} Let us choose $n=2$ and $\beta=\s_1^3$. The braid variety $X(\s_1^3;s_1)$ is a smooth surface in $\bC^3$ defined by the equation
$$X(\s_1^3;s_1) = \{(z_1,z_2,z_3)\in\bC^3: z_1 + z_3(1 + z_1 z_2) = 0\}.$$
If $1 + z_1 z_2 = 0,$ we get $z_1 = 0,$ and thus come to a contradiction. Therefore, $1 + z_1 z_2 \neq 0.$ Then $z_3 = - \frac{z_1}{1 + z_1 z_2}$, and so $X(\s_1^3;s_1)$ is isomorphic to the complement
$$Y = \{(z_1, z_2)\in\bC^2:1 + z_1 z_2 \neq 0\}$$
of a smooth hyperbola in $\bC^2$. The braid variety $X(\s_1^2;s_1)$ is isomorphic to $\bC^*$ and $X(\s_1;s_1)$ is a point. The corresponding brick manifold is $\brick(\s_1^3)\cong\PP^1\times \PP^1$, and these different braid varieties stratify it as follows. Consider the homogeneous coordinates $(x,y)\in\PP^1\times\PP^1$, and denote $[0:1]$ by $0$ and $[1:0]$ by $\infty$. Note that the homogenized hyperbola $\bar{C}$ contains the
points $(0,\infty)$ and $(\infty,0)$. The stratification of the brick variety $\PP^1\times \PP^1$ given by the braid varieties has the following seven strata:
	
	\begin{enumerate}
		\item[-] One 2-dimensional stratum $Y$, which is the complement of the smooth hyperbola in $\bC^2$.\\
		
		\item[-] Three 1-dimensional strata, each isomorphic to $\bC^*$. Two such strata are given by
		$$S_1:=\{(x,y)\in\PP^1\times \PP^1: x=\infty,\quad y\neq 0,\infty\},\quad S_2:=\{(x,y)\in\PP^1\times \PP^1: x\neq0,\infty,\quad y=\infty\},$$
		and the third one $S_3:=\{(z_1, z_2)\in\bC^2:1 + z_1 z_2 = 0\}$ is the affine hyperbola itself.\\
		
		\item[-] Three 0-dimensional strata, each a point: $p_1:=(\infty,0),p_2:=(0,\infty)$ and $p_3:=(\infty,\infty)$.
	\end{enumerate}

 \noindent The subvarieties $\overline{S_1}:=S_1\cup \{p_1,p_3\}$, $\overline{S_2}:=S_2\cup \{p_2,p_3\}$ and $\overline{S_3}:=S_3\cup \{p_1,p_2\}$ are all smooth projective lines $\P^1\sse \brick(\s_1^3)$, transversely intersecting each other. The compactifying divisor of $X(\s_1^3;s_1)\sse\brick(\s_1^3)$ is the union $\overline{S_1}\cup\overline{S_2}\cup \overline{S_3}$. Its dual complex, which records the  intersections of these strata, coincides with the spherical subword complex for $\beta=\s_1^3$.\hfill$\Box$
\end{example}

\subsection{Equivariant homology and Lefschetz property for braid varieties}\label{ssec:equiv_hom} Theorem \ref{prop:brick}.(ii) also allows us to compute the equivariant Borel-Moore homology of braid varieties, as follows. Given an algebraic variety $X$ with an action of a torus $T$, we denote by $H^{\BM}_*(X)$ (resp. $H^{\BM,T}_*(X)$) the  Borel-Moore homology (resp. $T$-equivariant Borel-Moore homology) of $X$. Similarly, we denote by $H^{*}_{c}(X)$ (resp. $H^{*}_{T,c}(X)$) the compactly supported cohomology (resp. $T$-equivariant compactly supported cohomology) of $X$. 


	
\noindent By Theorem \ref{prop:brick}.(ii), we can express the weight filtration on Borel-Moore homology of the braid variety $X(\ateb;w_0)$ in terms of homologies of brick manifolds 
$\brick(\beta_I)$ as follows. Consider the big complexes
$$C_{\bullet}:=\bigoplus_{I}H^{\BM}_*(\brick(\beta_I)),\quad
C_{\bullet}^T:=\bigoplus_{I}H_*^{\BM,T}(\brick(\beta_I))
$$
where the summation is over the subsets $I \subseteq [1, \ell]$ such that $\delta(\beta_I)=\delta(\beta)$, and the differential is given by inclusions $\brick(\beta_I) \hookrightarrow \brick(\beta_J)$
for $I\subset J,|J|=|I|+1$. The torus $T=(\mathbb{C}^*)^{n-1}$ has a natural action on braid varieties as in \cite[Section 2.2]{CGGS}, and the brick compactification is $T$-equivariant.
One can also consider the complexes 
$$C^{\bullet}:=\bigoplus_{I}H^*(\brick(\beta_I)),\quad
C^{\bullet}_T:=\bigoplus_{I}H^*_T(\brick(\beta_I)),
$$
where all inclusions of brick manifolds correspond to maps in cohomology (resp. equivariant cohomology) in the opposite direction.


\begin{prop}\label{prop:homology_braid}
In the notation above, we have the isomorphisms
$$	H_*(C_{\bullet})=\mathrm{gr}^{W}H^{\BM}_*(X(\ateb;w_0)),\quad 
H_*(C_{\bullet}^T)=\mathrm{gr}^{W}H_*^{\BM,T}(X(\ateb;w_0)),
$$
and
$$
H_*(C^{\bullet})=\mathrm{gr}^{W}H_{c}^*(X(\ateb;w_0)),\quad 
H_*(C^{\bullet}_T)=\mathrm{gr}^{W}H^*_{c,T}(X(\ateb;w_0)),
$$
where $\mathrm{gr}^{W}$  is the associated graded for the weight filtration.\\
\end{prop}

\begin{proof}
By Theorem \ref{prop:brick}.(ii), the homology of the complex $C_{\bullet}$ (resp. $C_{\bullet}^T$) is the $E_2$-page of the spectral sequence computing the Borel-Moore homology (resp. $T$-equivariant Borel-Moore homology) of $X(\ateb;w_0)$. Since all brick manifolds are smooth and projective, the weight filtrations in their homology agree with the homological gradings. By \cite{D,D2}, the higher differentials preserve weights and the spectral sequence collapses at the $E_2$ page. Therefore we obtain the equality
	$$
	H_*(C_{\bullet})=\mathrm{gr}^{W}H^{\BM}_*(X(\ateb;w_0)),\quad H_*(C_{\bullet}^T)=\mathrm{gr}^{W}H_*^{\BM,T}(X(\ateb;w_0)).
	$$
The proofs for complexes $C^{\bullet},C^{\bullet}_T$ are similar.    
\end{proof}

In order to describe the complexes $C^{\bullet}_T$ in more detail, we need notation from the theory of Soergel bimodules, see \cite{soergelbook} for details and references. Let 
$$
R:=\C[x_1,\ldots,x_n]=H^{*}_T(\mathrm{pt}).
$$
We need the following $R$-$R$- bimodules. For $1\le i\le n-1$, we write
\begin{equation}
\label{eq: Bi}
\SB_{i}:=\frac{\C[x_1,\ldots,x_n,x'_1,\ldots,x'_n]}{x_{i}+x_{i+1}=x'_{i}+x'_{i+1},x_{i}x_{i+1}=x'_{i}x'_{i+1},x_m=x'_m\quad (m\neq i,i+1)}.
\end{equation}
Note that we have a natural map $b_{i}:\SB_{i}\to R$ defined by 
$$
b_{i}(f(x_1,\ldots,x_n,x'_1,\ldots,x'_n)=f(x_1,\ldots,x_n,x_1,\ldots,x_n).
$$
Given a braid word $\beta=s_{i_1}\ldots s_{i_r}$, we define the {\it Bott-Samelson bimodule}
$$
\SB_{\beta}=\SB_{i_1}\bigotimes_{R}\cdots \bigotimes_{R} \SB_{i_r}.
$$
Alternatively, one can write $\SB_{\beta}$ as a quotient of the polynomial ring $\C\left[x^{(i)}_j,0\le \ell \le r,1\le j\le n\right]$ by the relations of the form \eqref{eq: Bi} between $(x^{(\ell)}_{j})_{j \in [1,n]}$ and $(x^{(\ell+1)}_{j})_{j \in [1,n]}$ for all $\ell \in [0, r - 1]$. 

\begin{prop}
\label{prop: homology bs} In the notation above, the following hold:
\begin{itemize}
    \item[(a)] We have the isomorphism $H^*_T(\BS(\beta))\simeq \SB_{\beta}$.\\
\item[(b)] If $\beta'$ is a subword of $\beta$  then we have a commutative diagram
$$
\begin{tikzcd}
H^*_T(\BS(\beta)) \arrow{r}{\simeq} \arrow{d}& \SB_{\beta} \arrow{d}\\
H^*_T(\BS(\beta')) \arrow{r}{\simeq}  & \SB_{\beta'},\\
\end{tikzcd}
$$
where the vertical arrows correspond to the inclusion $\BS(\beta')\hookrightarrow \BS(\beta)$ and to the composition of maps $b_i$ respectively.
\end{itemize}
\end{prop}

\begin{proof}
These facts are well-established, cf.~\cite{soergelbook}, but we review the key geometric ideas for completeness. The variables $x^{(\ell)}_{j}$ correspond to the first Chern classes of the tautological line bundles $\cL^{(\ell)}_{j}=\cF^{\ell}_{j}/\cF^{\ell}_{j-1}$ on the Bott-Samelson variety $\BS(\beta)$. Since $\BS(\beta)$ is a tower of $\mathbb{P}^1$-bundles, it can be proven that
$x^{(\ell)}_{j}$ generate the equivariant cohomology. If two flags $\cF$ and $\cF'$ are in position $s_i$, and $\cL_i=\cF_i/\cF_{i-1},\ \cL'_i=\cF'_i/\cF'_{i-1}$, then the rank two bundle 
$$
\cF_{i+1}/\cF_{i-1}=\cF'_{i+1}/\cF'_{i-1}
$$
is filtered both by $\cL_i,\cL_{i+1}$ and by $\cL'_i,\cL'_{i+1}$, hence the first Chern classes
$x_i=c_1(\cL_i),\ x'_i=c_1(\cL'_i)$ satisfy the relations \eqref{eq: Bi}. This implies Part (a).

For Part (b), consider the inclusion of the diagonal 
$\BS(1) \hookrightarrow \BS(s_i)$ and note that the corresponding map in equivariant cohomology agrees with $b_i:\SB_i\to R$.
\end{proof}

To describe the homology of $
\brick(\beta)$, we need to introduce the category of Soergel bimodules $\SBim_n$. It is defined as the smallest full subcategory of $R$-$R$ bimodules which contains $\SB_i$ and $R$ and is closed under taking tensor products, direct sums, direct summands and grading shifts. 

Any Soergel bimodule decomposes as a direct sum of {\em indecomposable} Soergel bimodules, which are indecomposable direct summands in the Bott-Samelson bimodules. The main structural Theorem of Soergel \cite{Soergel07} states that in fact indecomposable Soergel bimodules $\SB_{\sigma}$ (up to isomorphism and grading shifts) are indexed by the elements $\sigma\in S_n$. In particular, simple reflections $s_i$ correspond to $\SB_i$ while the element $w_0\in S_n$ corresponds to Soergel bimodule
$$
\SB_{w_0}=\frac{\C[x_1,\ldots,x_n,x'_1,\ldots,x'_n]}{e_i(x_1,\ldots,x_n)=e_i(x'_1,\ldots,x'_n),\ i=1,\ldots,n}.
$$
Here $e_i$ are elementary symmetric functions. 
We refer to \cite{soergelbook} for more details. 

\begin{definition}
Given a Soergel bimodule $\SB$, we denote the graded multiplicity of $\SB_{\sigma}$ in $\SB$ by $m_{\sigma}(q)$:
$$
\SB=\bigoplus_{\sigma\in S_n}m_{\sigma}(q)\SB_{\sigma}.
$$
Here $q$ corresponds to the grading shift. Furthermore, we define 
$$
\pi_{w_0}(\SB)=m_{w_0}(q)\SB_{w_0},
$$
this is the $\SB_{w_0}$-component of $\SB$.
\end{definition}

\begin{lemma}
\label{lem: homology brick}
Assume that $\delta(\beta)=w_0$, and let 
$$
\overline{\SB_{\beta}}=\pi_{w_0}(\SB_{\beta})/\left(x^{(0)}_{j}=x^{(r)}_{n+1-j},\quad 1\le j\le n\right).
$$
Then the following hold:
\begin{itemize}
    \item[(a)] We have an isomorphism $H^*_T(\brick(\beta))\simeq\overline{\SB_{\beta}}$.\\
\item[(b)] If $\beta'$ is a subword of $\beta$ with $\delta(\beta')=\delta(\beta)=w_0$ then we have a commutative diagram
$$
\begin{tikzcd}
H^*_T(\brick(\beta)) \arrow{r}{\simeq} \arrow{d}& \overline{\SB_{\beta}} \arrow{d}\\
H^*_T(\brick(\beta')) \arrow{r}{\simeq}  & \overline{\SB_{\beta'}}.
\end{tikzcd}
$$
\end{itemize}
\end{lemma}

\begin{proof}
Part (a) follows from the results of \cite{Harterich,Shchigolev}. 
In particular, by \cite[Corollary 6.5(b)]{Harterich} and \cite[Corollary 4.12]{Shchigolev}, the restriction map in equivariant cohomology
$$
H^*_T(\BS(\beta))\to H^*_T(\brick(\beta))
$$
is surjective. 
Part (b) is immediate from Proposition \ref{prop: homology bs}(b).
\end{proof}

Next, we associate {\em Rouquier complexes} \cite{Rouquier06} of Soergel bimodules to braids as follows. Define
$$
T_i=[\SB_i\xrightarrow{b_i} R],\quad T_i^{-1}=[R\xrightarrow{b_i^*} \SB_i]
$$
    where the map $b_i$ is as above and $b^*_i$ sends $1$ to $x_i-x'_{i+1}$. It is known \cite{Rouquier06,soergelbook} that $T_i,T^{-1}_i$ satisfy braid relations up to homotopy:
$$
T_i\otimes_R T_i^{-1}\simeq R, \quad T_{i+1}\otimes_R T_i\otimes_R T_{i+1}\simeq T_{i}\otimes_R T_{i+1}\otimes_R T_i,\quad T_i\otimes_R T_j\simeq T_j\otimes_R T_i\ (|i-j|\ge 2).
$$
This means that to any braid $\gamma$ one can associate a complex $T_{\gamma}$ by multiplying $T_i,T_i^{-1}$. We can use Rouquier complexes to give an alternative characterization of $\overline{\SB_{\beta}}$, as follows. For a Soergel bimodule $\SB$, let us denote

$$\overline{\SB}:=\pi_{w_0}(\SB)/\left(x^{(0)}_{j}=x^{(r)}_{n+1-j},\quad 1\le j\le n\right).$$

\begin{lemma}
\label{lem: hom from w_0}
Let $\SB$ be a Soergel bimodule. Then
$$\Hom(T_{w_0},\SB)\simeq \overline{\SB}.$$
\end{lemma}

\begin{proof}
Observe that $\Hom(T_{w_0},-)$ is an additive functor and it is enough to compute it on indecomposable objects $\SB_{\sigma}$. By the main result of \cite{LW}, cf.~also \cite[Corollary A.6]{GHMN}, for $u,v\in S_n$ one has $\Hom(T_u,T_v)=0$ unless $u\preceq v$ in Bruhat order. In particular, $\Hom(T_{w_0},T_v)=0$ unless $v=w_0$ where $\Hom(T_{w_0},T_{w_0})\simeq R$. 

\noindent Note that we can present an indecomposable bimodule $\SB_{\sigma}$ as a twisted complex built from $T_v$ for $v\preceq \sigma$, where there is exactly one copy of $T_{\sigma}$, cf.~\cite{LW,soergelbook}. This implies $\Hom(T_{w_0},\SB_{\sigma})=0$ for $\sigma\neq w_0$ and $\Hom(T_{w_0},\SB_{w_0})\simeq \Hom(T_{w_0},T_{w_0})\simeq R \simeq \overline{\SB_{w_0}}$. 
\end{proof}

\noindent By combining the above results, we can relate the homology of braid varieties to 
Khovanov-Rozansky link homology \cite{KhS,KR}.
The following result gives an alternative proof of 
\cite[Corollary 4]{trinh2021hecke}.

\begin{thm}
Let $\beta$ be a positive braid and suppose that $\delta(\beta)=w_0$. Then we have an isomorphism
$$\mathrm{gr}^{W}H_{c,T}^{*}(X(\ateb;w_0))\simeq \HHH^{a=n}(\beta\Delta)\simeq \HHH^{a=0}(\beta\Delta^{-1}),
  $$
where $\HHH^{a=0}$, resp.~ $\HHH^{a=n}$, is the Khovanov-Rozansky homology in lowest, resp.~highest, $a$-degree.
\end{thm}

\begin{proof}
We prove this by combining the above results. By Proposition \ref{prop:homology_braid} we get
$$
\mathrm{gr}^{W}H_{c,T}^{*}(X(\ateb;w_0))\simeq H^*(C^{\bullet}_{T})
$$
where 
$$
C^{\bullet}_{T}=\bigoplus_{I} H^*_T(\brick(\beta_{I})),
$$
the sum runs over subsets $I$ such that $\delta(\beta_I)=w_0$ and the differential is given by the restriction maps in equivariant cohomology corresponding to the inclusions $\brick(\beta_I) \hookrightarrow \brick(\beta_J)$. 

\noindent By Lemma \ref{lem: homology brick} we can replace each 
$H^*_T(\brick(\beta_{I}))$ by $\overline{\SB_{\beta_I}}$ and the maps between these by the maps $b_i$. By Lemma \ref{lem: hom from w_0} we get  $\overline{\SB_{\beta_I}}\simeq \Hom(T_{w_0},\SB_{\beta_I})$,
and $\Hom(T_{w_0},\SB_{\beta_I})=0$ whenever $\delta(\beta_I)\neq w_0$. 

To sum up, we can replace the complex $C^{\bullet}_{T}$ by $\bigoplus_{I}\Hom(T_{w_0},\SB_{\beta_I})$ where the sum runs over all possible subsets $I$, and the restriction maps are induced by the maps $b_i$. This latter complex is precisely
$$
\bigoplus_{I}\Hom(T_{w_0},\SB_{\beta_I})=\Hom\left(T_{w_0},[\SB_{i_1}\xrightarrow{b_{i_1}} R]\otimes_{R} \cdots \otimes_{R} 
[\SB_{i_r}\xrightarrow{b_{i_r}} R]\right)=
$$
$$
\Hom(T_{w_0},T_{i_1}\otimes_R \cdots \otimes_R 
T_{i_r})=\Hom(T_{w_0},T_{\beta})=\Hom(R, T_{\beta}T_{w_0}^{-1})=\HHH^{a=0}(\beta\Delta^{-1}).
$$
Finally, by the main result of \cite{GHMN} we conclude $\HHH^{a=0}(\beta\Delta^{-1})\simeq \HHH^{a=n}(\beta\Delta)
$.
\end{proof}

Finally, \cite{Mellit} studied the \emph{curious Lefschetz property}, as defined by \cite{HRV}, for cohomology rings of character varieties. In \cite{Mellit}, a stratification of a certain vector bundle over a given character variety is shown to be equal to a stratification by vector bundles over braid varieties. In particular, \cite{Mellit} proves the curious Lefschetz property for each braid variety. Therefore, Corollary \ref{cor:richardson} implies:

\begin{cor} Let $u, w \in S_{n}$ be such that $u \leq w$ in Bruhat order.
Then the open Richardson variety  $\rich{w}{u}$ for $G:=\mbox{SL}(n,\C)$ 
satisfies the curious Lefschetz property.\hfill$\Box$
\end{cor}

\noindent This result was first conjectured by T. Lam and D. Speyer \cite[Section 1.5.1]{LS}, see also further discussion in a recent paper by P. Galashin and T. Lam \cite{GL}.

\section{Concluding remarks on cluster structures and Legendrian links}
\label{sec:cluster}

Let us conclude this article with a few comments and conjectures on cluster structures. The reader is referred to \cite{FominZelevinsky_ClusterI,FominZelevinsky_ClusterII} and \cite{FockGoncharov_ModuliLocSys,FockGoncharovII} for the necessary preliminaries on cluster structures, and see also \cite{FG,Fraser, GHKK, LS, Mills, Muller, MS,Qin}. See \cite[Section 2.8]{CasalsWeng22} and \cite[Section 6]{FWZ} 
for a discussion on cluster structures on spaces, and we refer to \cite{CasalsHonghao,CG23}  and references therein for recent developments.\\

\noindent Let $\La\sse(\R^3,\xi_{st})$ be a Legendrian link and $T\sse\La$ a set of marked points, with at least one marked point per component. From now onward, unless otherwise specified, we assume that $\La$ is isotopic to the Legendrian lift $\La_\beta$ of a $(-1)$-closure of a positive braid $\beta$, which we always assume has $w_0$ as a suffix. Let us denote by $\mathfrak{M}(\La,T)$ either of the two smooth affine varieties:

\begin{enumerate}
    \item $\mathfrak{M}(\La,T)=\mbox{Spec}(H^0(\SA(\La,T)))$, where $\SA(\La,T)$ denotes the (commutative) Legendrian contact dg-algebra associated to $(\La,T)$. This dg-algebra was defined in \cite{Chekanov}, see \cite{EtnyreNg18} for a survey.\\

  \item $\mathfrak{M}(\La,T)$ is the moduli stack of pseudo-perfect objects of microlocal rank one in $\mbox{Sh}_\La(\R^2,T)$ with a trivialization of the microlocal functor on $\La\setminus T$. See \cite[Section 2.7]{CasalsWeng22} for more details.
\end{enumerate}

\noindent These varieties are isomorphic, cf.~\cite[Section 4]{CasalsWeng22} and \cite[Prop.~5.2]{CasalsNg}, or \cite{NRSSZ}. If $T$ is chosen so that every strand of the positive braid word $\beta$ associated to $\La=\La_\beta$ has exactly one marked point, then $\mathfrak{M}(\La_\beta,T)$ is isomorphic to 
a 
braid variety, 
cf.~\cite[Section 2]{CGGS}. 
In \cite{cgglss} we proved that the algebra of regular functions $\C[\mathfrak{M}(\La,T)]$ is a cluster algebra satisfying many interesting properties, such as local acyclicity and the existence of a Donaldson-Thomas transformation, see also \cite{CasalsWeng22,glsbs}. This fact was conjectured in the initial version of this manuscript, prior to ibid.

Let us consider a contact isotopy $\{\phi_t\}_{t\in[0,1]}$, so that each $\phi_t\in\mbox{Cont}^c(\R^3,\xi_{st})$ is a compactly supported contactomorphism and $\phi_0=\mbox{Id}$. Note that any Legendrian isotopy extends to a contact isotopy \cite{Geiges08}. Then the contact isotopy $\{\phi_t\}_{t\in[0,1]}$ induces an algebraic isomorphism of affine varieties
$$\Phi_t:\mathfrak{M}(\La,T)\lr\mathfrak{M}(\phi_t(\La),\phi_t(T)).$$
Indeed, this is proved in \cite{Chekanov} for the Floer-theoretic approach with the dg-algebra, 
and holds by \cite{GKS_Quantization} in the sheaf-theoretic approach. We conjecture that these are not just isomorphisms of affine varieties, but also quasi-cluster isomorphisms. See ~\cite{Fraser} for details on quasi-cluster structures and their morphisms. The precise conjecture reads as follows:

\begin{conj}[Legendrian invariance of cluster structures]
Let $\La_0$ and $\La_1$ be two Legendrian links in $(\R^3,\xi_{st})$ and let $\{\phi_t\}$ be a contact isotopy $\phi_t\in\Cont^c(\R^3,\xi_{st})$, ${t\in[0,1]}$, such that $\phi_1(\La_0)=\La_1$. Consider a set of marked points $T_0\sse\La_0$, with at least one marked point per component and write $T_1:=\phi_1(T)$. Then the induced algebraic morphism
$$\Phi_t^*:\C[\mathfrak{M}(\La_1,T_1)]\lr\C[\fM(\La_0,T_0)]$$
is a quasi-cluster isomorphism, where both target and domain are endowed with the cluster structures constructed in \cite{cgglss}. That is, the quasi-cluster isomorphism type of the cluster algebra $\C[\mathfrak{M}(\La,T)]$ is a Legendrian isotopy invariant of a Legendrian link $\La\sse(\R^3,\xi_{st})$ with a set of marked points $T\sse\La$.
\end{conj}

Following the symplectic geometry behind the construction of the cluster structures in \cite{cgglss,CasalsWeng22}, it is natural to expect a generalization of our previous work and the above conjecture to any Legendrian link $\La\sse(\R^3,\xi_{st})$, even if it is not the $(-1)$-closure of a positive braid. A first challenge is structural: the spaces $\fM(\La,T)$ are in general just $D^-$-stacks 
and there is no definition known to us of what it means for a derived stack to admit a cluster structure. More generally, both the dg-category $\mbox{Sh}_\La(\R^2,T)$ of sheaves above and the stable tame isomorphism type of $\SA(\La,T)$ are Legendrian invariant. The geometry indicates that they should admit a sort of ``cluster structure'', but this is an algebraic structure which is yet to be specified. One challenge is that we do not know what it means for a (finite type) dg-category to admit a cluster structure\footnote{A desirable feature would be that the ``ring of regular functions'' of its derived stack of pseudo-perfect objects -- e.g.~supposing it is isomorphic to an affine scheme -- is a cluster algebra, again in the appropriate sense.} or for a dg-algebra to be a ``cluster'' dg-algebra. The characteristic algebra, as introduced in \cite[Section 3]{Ng03}, or the graded augmentation variety, modulo dg-homotopies, might be reasonable objects to study from this viewpoint. In either case, based on the geometry, we conjecture that such generalizations of cluster structures exist and are Legendrian invariant, up to quasi-cluster isomorphisms.

\bibliographystyle{plain}
\bibliography{main}

\end{document}